\definecolor{citegreen}{rgb}{0,0.6,0}
\definecolor{refred}{rgb}{0.8,0,0}
\theoremstyle{plain}
\newtheorem{teo}{Theorem}[section]
\newtheorem{lemma}[teo]{Lemma}
\newtheorem{prop}[teo]{Proposition}
\newtheorem{cor}[teo]{Corollary}
\newtheorem{dfnz}[teo]{Definition}
\newtheorem{ackn}{Acknowledgments\!}
\theoremstyle{definition}
\newtheorem{rem}[teo]{Remark}
\theoremstyle{remark}
\numberwithin{equation}{section}
\def\NN{{{\mathbb N}}}
\def\RR{{\mathbb R}}
\def\RRR{{\mathrm R}}
\def\HHH{{\mathrm H}}
\def\Ric{{\mathrm {Ric}}}
\def\Rm{{\mathrm {Rm}}}
\def\eps{\varepsilon}
\def\div{\operatornamewithlimits{div}\nolimits}
\def\tr{\operatornamewithlimits{tr}\nolimits}
\def\dist{\mathrm{dist}}
\def\Ric{\mathop{\rm Ric}\nolimits}
\def\Hess {\mathop{\rm Hess}\nolimits}
\def\Rm{\mathop{\rm Rm}\nolimits}
\def\tr{\mathop{\rm tr}\nolimits}
\def\vol{\mathop{\rm vol}\nolimits}
\def\vol{\mathop{\rm Vol}\nolimits}
\def\div{\mathop{\rm div}\nolimits}
\def\A{\mathop{\rm A}\nolimits}
\def\Li{\mathop{\rm \mathscr{L}}\nolimits}
\title[Uniqueness of asymptotically cylindrical gradient shrinking Ricci solitons]{Uniqueness of asymptotically cylindrical gradient shrinking Ricci solitons}
\author[Giovanni Catino]{Giovanni Catino}
\address[Giovanni Catino]{Dipartimento di Matematica, Politecnico di Milano, Piazza Leonardo da Vinci 32, 20133 Milano, Italy}
\email[]{giovanni.catino@polimi.it}
\author[Alix Deruelle]{Alix Deruelle}
\address[Alix Deruelle]{Mathematics Institute, University of Warwick, Gibbet Hill Rd, Coventry, West Midlands CV4 7AL}
\email{A.Deruelle@warwick.ac.uk}
\author[Lorenzo Mazzieri]{Lorenzo Mazzieri}
\address[Lorenzo Mazzieri]{Scuola Normale Superiore, Classe di Scienze, Piazza dei Cavalieri 7, 56126 Pisa, Italy}
\email{l.mazzieri@sns.it}
\date{\today}
\begin{document}

\begin{abstract} In this paper we prove that any asymptotically cylindrical gradient shrinking Ricci soliton is isometric to a cylinder.
\end{abstract}

\maketitle

\section{Introduction and statement of the result}
A gradient shrinking Ricci soliton is a smooth Riemannian manifold $(M^n, g)$ satisfying 
$$
\Ric \, + \, \nabla^{2}f \,=\, \lambda \, g \,,
$$
for some $\lambda>0 $ and some smooth function $f$ defined on $M^{n}$. In the following, we will adopt the normalization $\lambda = \frac{1}{2}$. Hence, throughout this paper the fundamental equation will be given by
\begin{equation}
\label{equ:0}
\Ric \, + \, \nabla^2 f \,\, = \,\, \frac{1}{2} \, g \, .
\end{equation}
Gradient shrinking Ricci solitons turn out to be Type I finite time singularities of the Ricci flow. Therefore their classification is important to understand the singularities of the Ricci flow in the large. Since the seminal work of Perelman on the classification of $3$-dimensional gradient shrinking Ricci solitons, there has been a vast amount of litterature on the subject (see \cite{Cao-Geo-Sol} for a survey).

In order to state our main result, we introduce the following definition of {\em asymptotically cylindrical} gradient shrinking Ricci soliton.
\begin{dfnz}
\label{a.c.}
A complete noncompact Riemannian manifold $(M^n, g)$ is said to be {\em asymptotically cylindrical} if for every sequence of marked points $(x_k)_{k\in \NN}$ which tends to infinity, 
the sequence of pointed Riemannian manifolds $(M^n, g, x_k)$ converges in the smooth Cheeger-Gromov sense to the cylinder $(\mathbb{R}\times \mathbb{S}^{n-1},dt^2+h)$, where $h$ is a metric of positive constant curvature.
\end{dfnz}

In the following we will consider cylindrical gradient shrinking Ricci solitons and we assume that $h$ is normalized in such a way that $\Ric_h=h/2$. We are now in the position to state our main result.

\begin{teo}\label{rev-asy-cyl-sgs}
\label{main}
Let $(M^n, g, f)$ be a complete noncompact gradient shrinking Ricci soliton, which is asymptotically cylindrical. Then, $(M^n, g, f)$ is isometric to the cylinder $(\mathbb{R}\times \mathbb{S}^{n-1},dt^2+h)$.
\end{teo}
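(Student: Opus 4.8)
Our strategy is to turn the qualitative (Cheeger--Gromov) hypothesis into a quantitative one — a definite decay rate of the ends of $(M^n,g)$ towards the model cylinder — and then to invoke a unique continuation principle at infinity, in the spirit of Kotschwar--Wang's treatment of asymptotically conical shrinkers, to conclude that a neighbourhood of infinity of $M$ literally coincides with a piece of the round cylinder; a global analyticity argument then finishes the proof. As preliminaries we record the standard consequences of \eqref{equ:0}: $R+\Delta f=\tfrac{n}{2}$, $R+|\nabla f|^2=f$ after normalising the additive constant, $\nabla R=2\,\Ric(\nabla f)$, $R\ge 0$ (Chen), and $f$ proper with $f$ comparable to $\tfrac14$ of the squared distance (Cao--Zhou). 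The asymptotically cylindrical hypothesis then forces, at infinity, $R\to\tfrac{n-1}{2}$, $|\Ric|^2\to\tfrac{n-1}{4}$, and, since the cylinder is locally conformally flat, $W\to 0$, all with bounded covariant derivatives; in particular $(M,g)$ is not flat. Since $|\nabla f|^2=f-R\to+\infty$, the field $\nabla f$ is nonvanishing outside a compact set, each end of $M$ is diffeomorphic to $[T,\infty)\times\mathbb{S}^{n-1}$, and using $f$ as defining coordinate (i.e.\ flowing along $\nabla f/|\nabla f|^2$) one obtains a gauge in which $g=\diff s^2+k(s)$ on the end, with $k(s)$ a metric on $\mathbb{S}^{n-1}$, $k(s)\to h$ and $f$ asymptotic to $\tfrac{s^2}{4}$ in $C^\infty_{\mathrm{loc}}$.

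The core step is to upgrade this convergence to a rate. In a DeTurck-type gauge relative to $\diff t^2+h$ the soliton equation \eqref{equ:0} becomes an elliptic system for the pair $(g,f)$; linearising at the cylinder yields an operator of Lichnerowicz type $\Delta_f+2\,\Rm\,\ast$, where the drift Laplacian acts in the axial variable as the Ornstein--Uhlenbeck operator $\partial_t^2-\tfrac{t}{2}\partial_t$ (because $\nabla f=\tfrac{t}{2}\partial_t$ and $f=\tfrac{t^2}{4}$). Since $e^{-t^2/4}\diff t$ is finite and $\mathbb{S}^{n-1}$ is compact, separation of variables exhibits the homogeneous solutions on the end as combinations of "polynomial" modes $c_\lambda(t)\sim t^{2\mu_\lambda}$ (indexed by the eigenvalues $\mu_\lambda$ of the cross-sectional operator) and of wildly growing modes $\sim e^{t^2/4}$. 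The bare Cheeger--Gromov hypothesis already gives $g-(\diff t^2+h)\to 0$ and $f-\tfrac{t^2}{4}\to 0$ on the slices, so the perturbation contains neither the exponential modes nor the growing polynomial modes, and, after removing the finite-dimensional neutral space — which only encodes translation of the $s$-origin together with the corresponding harmless change of $f$, the round cross-section $\mathbb{S}^{n-1}$ being Einstein-rigid — it must decay at a definite polynomial rate in $t$; a bootstrap against the nonlinear terms then improves this to any rate needed downstream, together with all derivatives.

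With a decay rate in hand, subtract the soliton equations for $g$ and for the cylinder: the difference tensor satisfies, with respect to the cylinder metric, a differential inequality of the form $|\Delta(\,\cdot\,)|\le C(|\,\cdot\,|+|\nabla(\,\cdot\,)|)$ and decays fast at infinity. A Carleman estimate on the cylindrical end, as in Kotschwar--Wang, then forces it to vanish identically there, so that a neighbourhood of infinity of $(M,g,f)$ is isometric, as a gradient shrinking soliton, to a piece of $(\mathbb{R}\times\mathbb{S}^{n-1},\diff t^2+h,\tfrac{t^2}{4})$. Finally, soliton metrics are real-analytic in suitable coordinates (Kotschwar), so the unique continuation property of the soliton system propagates the isometry to all of $M$; passing to the universal cover one concludes that $(M,g)$ is a complete quotient of $(\mathbb{R}\times\mathbb{S}^{n-1},\diff t^2+h)$, and since every blow-down is required to be the full cylinder with cross-section exactly $\mathbb{S}^{n-1}$, the only possibilities are $\mathbb{R}\times\mathbb{S}^{n-1}$ itself and the one-ended $\mathbb{Z}_2$-quotient by $(t,x)\mapsto(-t,-x)$, the latter being excluded under the (implicit) simple-connectedness assumption, leaving $(M,g,f)$ isometric to the round cylinder.

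The main obstacle is the second step: extracting a uniform, quantitative rate from a hypothesis that only provides $C^\infty_{\mathrm{loc}}$ convergence along sequences. The delicate points are that the end is "parabolically cylindrical", so the asymptotic ODEs governing the separated modes are irregular-singular (Ornstein--Uhlenbeck type) rather than Euler type, that the model carries a translation symmetry producing a genuine neutral eigenspace which must be identified and gauged away before any decay can be claimed, and that the round cylinder is not linearly stable, so one cannot simply run a maximum principle and must instead control the full nonlinear system — it is precisely the nonlinear obstruction to the would-be slowly decaying linearised deformations that makes the theorem true. Once the rate is established, the Carleman and analytic-continuation arguments of the third step are a fairly standard adaptation of existing technology.
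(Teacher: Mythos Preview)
Your proposal is a plausible research outline, but it is not a proof: the step you yourself flag as ``the main obstacle'' --- extracting a definite decay rate from the bare Cheeger--Gromov hypothesis --- is never actually carried out. The heuristic you give (separate variables in the linearised operator, discard the exponentially growing modes, gauge away the neutral translation mode, bootstrap) hides exactly the difficulty: the round cylinder is linearly \emph{unstable} as a shrinker (the Lichnerowicz operator $\Delta_f+2\Rm\ast$ on the cylinder has positive eigenvalues coming from the second spherical harmonics, the very modes that drive the neckpinch), so the mere fact that $g-(\diff t^2+h)\to 0$ does \emph{not}, by itself, preclude slowly decaying modes that would block the bootstrap. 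You acknowledge this but do not explain how the nonlinear obstruction forces those modes to vanish. The subsequent Carleman step is also not free: Kotschwar--Wang's estimates are written for conical ends, where the weight functions and the scaling are tied to the cone structure; the cylindrical end is the degenerate (collapsed) limit, and the required Carleman inequality there is a separate piece of analysis, not a citation.

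The paper takes a completely different route, adapting Brendle's Killing-field method for steady solitons rather than a Kotschwar--Wang unique continuation. The quantitative decay is obtained not by linearising the soliton system, but by deriving a differential inequality $\Delta_f|T|^2\ge c(n)|T|^2+O(f^{-1})$ for the traceless tensor $T=\Ric-\tfrac{R}{n-1}(g-\mathbf{n}\otimes\mathbf{n})$ and running a parabolic maximum principle in $\log f$; this yields $|T|^2=O(f^{-a(n)})$ and then $|\nabla^k\Ric|=O(f^{-a(n)/2+\epsilon})$, without ever confronting the linear instability of the cylinder. With this in hand the paper constructs $n(n-1)/2$ almost-Killing vector fields on the far level sets, solves $\Delta_f V+V/2=Q$ with decay (using the positive $\Delta_f$-eigenfunction $v=f-n/2$ as a barrier --- this is the replacement for your Carleman weight), and proves a Liouville theorem for the Lichnerowicz equation $\Li_Xh-h=\Delta_Lh$ to show the corrected fields are genuine Killing fields orthogonal to $\nabla f$. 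Rotational symmetry of the level sets then follows, and Kotschwar's classification of rotationally symmetric shrinkers (plus analyticity) finishes. The advantage of this approach is that the barrier $f-n/2$ and the maximum principles sidestep both the mode-by-mode analysis and the instability issue that your outline leaves open.
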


In case of bounded positive curvature operator, we have the following corollary.

\begin{cor}\label{nonneg-cyl}
Let $(M^n, g, f)$ be a complete gradient shrinking Ricci soliton with bounded positive curvature operator. Then, it is either compact or $\limsup_{+\infty}\frac{\vol B(p,r)}{r}=+\infty$ for any $p\in M^n$.
\end{cor}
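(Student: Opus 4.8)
The plan is to argue by contradiction, reducing the statement to Theorem~\ref{main}. Suppose $(M^n,g,f)$ is a complete \emph{noncompact} gradient shrinking Ricci soliton with bounded positive curvature operator, and suppose that $\limsup_{r\to+\infty}\vol B(p,r)/r<+\infty$ for some $p\in M^n$. A positive curvature operator forces $\Ric>0$, and every complete gradient shrinking Ricci soliton satisfies a linear \emph{lower} volume bound $\vol B(p,r)\ge c\,r$ for all large $r$ (Carrillo--Ni, Cao--Zhou); hence under the contradiction hypothesis the volume growth of $M$ is exactly linear, $c\,r\le\vol B(p,r)\le C\,r$ for large $r$.

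Next I would prove that $M$ is asymptotically cylindrical in the sense of Definition~\ref{a.c.}, which is the substantive part of the argument. Fix a sequence $x_k\to\infty$. Since $M$ has bounded curvature it is $\kappa$-noncollapsed at all scales (Perelman, via the soliton logarithmic Sobolev inequality), so $(M^n,g,x_k)$ subconverges in the pointed smooth Cheeger--Gromov topology to a complete limit $(M_\infty^n,g_\infty,x_\infty)$ with bounded nonnegative curvature operator. Using the Cao--Zhou asymptotics $f(x)=\tfrac14\dist(p,x)^2+O(\dist(p,x))$ and $|\nabla f|(x)=\tfrac12\dist(p,x)+O(1)$, the renormalised potentials $v_k:=(f-f(x_k))/|\nabla f|(x_k)$ satisfy $v_k(x_k)=0$, $|\nabla v_k|\to 1$ locally uniformly, and $\nabla^2 v_k=(\tfrac12 g-\Ric)/|\nabla f|(x_k)\to 0$ in $C^\infty_{\mathrm{loc}}$; hence $v_k\to v_\infty$ with $|\nabla v_\infty|\equiv 1$ and $\nabla^2 v_\infty\equiv 0$, and the de Rham splitting theorem gives $(M_\infty,g_\infty)=(\RR\times N^{n-1},\diff s^2+g_N)$. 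The factor $N$ is closed: a noncompact $N$ with $\Ric_N\ge 0$ would have at least linear volume growth, hence $M_\infty$ at least quadratic growth, contradicting the bound on $\vol B(x_k,r)$ transferred from the linear volume growth of $M$ by a volume comparison special to shrinking solitons. Thus $N$ is a closed manifold carrying a metric of nonnegative curvature operator.

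It remains to identify $N$ with a round sphere, which I expect to be the main obstacle. The level sets $\Sigma_s=\{f=s\}$ have second fundamental form $(\tfrac12 g-\Ric)|_{T\Sigma_s}/|\nabla f|$, which tends to $0$ as $s\to+\infty$; hence $N$ is totally geodesic in $M_\infty$, and passing the soliton structure to the limit realises $g_\infty(\cdot)$ as a $\kappa$-noncollapsed ancient Ricci flow on the closed manifold $N$ with bounded nonnegative curvature operator. Invoking Hamilton's strong maximum principle together with the rigidity of such ancient solutions (B\"ohm--Wilking, Tachibana), and using that $M$ itself has a \emph{positive} curvature operator to exclude reducible cross-sections, one concludes $(M_\infty,g_\infty)\cong(\RR\times\SS^{n-1},\diff t^2+h)$ with $h$ of positive constant curvature, for every $x_k\to\infty$; that is, $M$ is asymptotically cylindrical. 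By Theorem~\ref{main}, $(M^n,g)\cong(\RR\times\SS^{n-1},\diff t^2+h)$. But this manifold admits a parallel line field $\partial_t$, so its curvature operator has a nontrivial kernel, contradicting the hypothesis $\Rm>0$ on $M$. Therefore every noncompact soliton with bounded positive curvature operator satisfies $\limsup_{r\to+\infty}\vol B(p,r)/r=+\infty$ for every $p\in M^n$. The delicate points, beyond the standard compactness and splitting, are exactly the two places where the classical Bishop--Gromov inequality is insufficient because it only controls balls from the inside: keeping the cross-section closed of definite size (needing a volume comparison tailored to shrinkers), and promoting ``closed Einstein with nonnegative curvature operator'' to ``round sphere'' by exploiting the constraint that $\Rm>0$ imposes on the total space.
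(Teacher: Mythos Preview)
Your overall architecture matches the paper's: assume noncompact with linear volume growth, show the soliton is asymptotically cylindrical, invoke Theorem~\ref{main}, and obtain a contradiction with strict positivity of the curvature operator. The paper, however, reaches the same endpoint with two shortcuts that eliminate precisely the two ``delicate points'' you flag.

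First, for the splitting, the paper simply cites Naber's structure theorem for noncompact shrinkers with bounded nonnegative curvature: every sequence $(M^n,g,x_k)$ subconverges to $(\RR\times N,\diff t^2+h)$ with $(N,h)$ itself a nonflat gradient shrinking soliton with nonnegative curvature operator. Your renormalised-potential argument is essentially the mechanism behind Naber's proof, so nothing is lost, but the citation replaces your compactness/splitting paragraph and also supplies the soliton structure on $N$ directly.

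Second, and more importantly, the paper identifies $N$ with a round sphere by a topological argument you do not use. Since $M$ has positive curvature operator (hence positive sectional curvature) and is open, the soul theorem of Gromoll--Meyer forces $M^n\cong\RR^n$; the regular level sets $f^{-1}(t)$ for large $t$ are therefore homeomorphic to $\SS^{n-1}$, and since $N$ is diffeomorphic to these level sets, $N$ is a topological sphere. Then the B\"ohm--Wilking classification of closed manifolds with nonnegative curvature operator, applied to the compact shrinker $(N,h)$, pins down $h$ as the round metric. This single topological input disposes of your worry about ``promoting closed Einstein with nonnegative curvature operator to round sphere'': you do not need to rule out $\mathbb{CP}^k$ or irreducible symmetric factors by curvature arguments on $M$, because the diffeomorphism type of $N$ is already known. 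Your proposed route via strong maximum principle and rigidity of ancient solutions could in principle be made to work, but it is considerably heavier and your sketch does not say how strict positivity on $M$ (which may degenerate at infinity) excludes irreducible non-spherical cross-sections; Gromoll--Meyer sidesteps this entirely.
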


A first remark on Theorem \ref{rev-asy-cyl-sgs} is that we do not use any nonnegativity assumptions on the curvature tensor. Moreover, according to the classification of rotationally symmetric gradient shrinking Ricci solitons~\cite{kotschwar}, we are reduced to prove that any asymptotically cylindrical soliton is rotationally symmetric.

The core of the proof of Theorem \ref{rev-asy-cyl-sgs} is essentially based on the work of Brendle~\cite{Bre-Rot-3d}, where the author proves the uniqueness of the Bryant soliton in the class of gradient steady Ricci solitons with positive sectional curvature. Our proof is based on the ideas developed in~\cite{Bre-Rot-3d} with a substantially differences in the interpolation of almost-Killing vector fields (Section \ref{Vec-Interpol}) and the analysis of Lichnerowicz equation (Section \ref{Rig-Lic-Equ}). This allows us to drop the assumption on the nonnegativity of the curvature. Moreover, our approach is purely static, since it does not make use of the evolution equation of the geometric quantities under the Ricci flow. 

Now, we present the main steps of the proof of Theorem \ref{rev-asy-cyl-sgs}.

\begin{itemize}
\item To prove the rotational symmetry, it is sufficient to build $n(n-1)/2$ independent Killing vector fields orthogonal to $X:=\nabla f$. Generally speaking, a Killing field $U$ satisfies $\Delta U+\Ric(U)=0$. In the case of a gradient shrinking Ricci soliton, this gives $\Delta U-\nabla_UX+U/2=0$. For technical reasons, it is easier to estimate the operator $\Phi:U\mapsto\Delta U-\nabla_X U+U/2$. Using the assumption on the asymptotic behavior, Sections \ref{1-cov-der} and \ref{Alm-Kill-Fiel} are devoted to build $n(n-1)/2$ vector fields $\{U_i\}_i$ which are almost Killing (Proposition \ref{prop-ex-alm-kill}).  In particular, it is shown that the vector fields $U_i$ are bounded and the vector fields $\Phi(U_i)$ decay sufficiently fast. 

\item In Section \ref{Vec-Interpol}, we prove Theorem \ref{theo-ex-app-kill}  that establishes the surjectivity of $\Phi$ in the following sense: we prove the existence of vector fields $V_i$ decaying sufficiently fast  such that $\Phi(V_i)= \Phi(U_i)$ for any $i=1,\ldots n-1$. In particular, this ensures that the vector fields $U_i-V_i$ are not trivial. For that purpose, we use the potential function as a barrier to establish a maximum principle at infinity (Proposition \ref{max-ppe-vec}).

\item In Section \ref{Rig-Lic-Equ} we study the rigidity of Lichnerowicz equation. In fact, the vector fields $W_i:=V_i-U_i$ built previously lie in $\ker \Phi$. Now, $\ker\Phi$ is not reduced to Killing fields since $X\in\ker\Phi$. Theorem \ref{triv-ker} shows that there exist real numbers $\lambda_i$ such that the vector fields $W_i-\lambda_iX$ are Killing. The proof consists in noting that the symmetric $2$-tensors $\Li_{W_i}g=:h_i$ satisfy Lichnerowicz equation $\Li_X h_i -h_i=\Delta_L(h_i)$ where $\Delta_L$ is the Lichnerowicz Laplacian. Theorem \ref{triv-ker} shows that the only solution decaying polynomially is, up to a homothety, $\Li_X g $. As for vector fields, we need to establish a priori estimates (Proposition \ref{c0-est-lic}) with the help of the barrier function $v:=f-n/2$ which is a positive eigenfunction of $\Delta_f$, i.e. $\Delta_fv=-v$.

\item In Section \ref{Conc} we conclude the proof of Theorem \ref{rev-asy-cyl-sgs} and Corollary \ref{nonneg-cyl}.
\end{itemize}
We end the introduction with some remarks on related works. Recently, Kotschwar and Wang \cite{Kot-Wan} proved that two gradient shrinking Ricci solitons whose asymptotic cones are isometric are actually isometric. Their method is completely different and more involved, since they are dealing with the most general situation of an asymptotically conical metric. Combining our proof with rescaling arguments, one can reprove the result of Kotschwar and Wang in the particular case where the asymptotic cones are the most symmetric ones. More precisely, we obtain the following theorem.

\begin{teo}
Any smooth gradient shrinking Ricci soliton which is asymptotic to the cone $$(C(\mathbb{S}^{n-1}),dr^2+(cr)^2g_{\mathbb{S}^{n-1}}),$$ where $g_{\mathbb{S}^{n-1}}$ is the metric of constant sectional curvature $1$ and $c>0$, is rotationally symmetric. Hence $c=1$ and the metric is isometric to the Euclidean space.
\end{teo}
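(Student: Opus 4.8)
The plan is to deduce this corollary from Theorem~\ref{rev-asy-cyl-sgs} by a rescaling argument that transforms an asymptotically conical shrinking soliton (with the most symmetric cone) into an asymptotically cylindrical one. First I would fix the shrinking soliton $(M^n,g,f)$ and suppose it is asymptotic to the cone $(C(\mathbb{S}^{n-1}),dr^2+(cr)^2 g_{\mathbb{S}^{n-1}})$. Using the standard structure theory of shrinking solitons (the potential function $f$ grows quadratically, $|\nabla f|^2 \sim f$, and $r:=2\sqrt{f}$ behaves like a distance function), one identifies level sets $\Sigma_s=\{f=s\}$ which, after rescaling by $s^{-1/2}$, converge smoothly to the sphere of radius $c$ in the cone. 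The key point is that along a sequence of marked points $x_k$ going to infinity, one rescales the metric by $f(x_k)^{-1}$; the soliton equation is scale-invariant in the appropriate sense (rescaling $g\mapsto \mu g$ sends a $\lambda$-soliton to a $(\lambda/\mu)$-soliton), and in the blow-down limit one obtains a Ricci-flat cone. However, because the base of the cone is the round sphere, the blow-down is not the generic picture: one instead rescales so as to ``zoom in'' on the neck region and the relevant pointed limits along rays are precisely cylinders $\mathbb{R}\times\mathbb{S}^{n-1}$.

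The heart of the matter is to verify Definition~\ref{a.c.} for this rescaled object, i.e. to show that every sequence of marked points tending to infinity yields, in the pointed Cheeger--Gromov limit, the cylinder $(\mathbb{R}\times\mathbb{S}^{n-1}, dt^2+h)$ with $h$ of constant positive curvature. The mechanism is that the cone over the round sphere has the property that its ``unit-distance neighborhoods at scale $r$'' look, after rescaling by $r$, more and more like a flat half-cylinder as $r\to\infty$ near any fixed ray — more precisely, the soliton's own geometry near a point at $f$-level $s$, rescaled by $s^{-1}$, is $C^\infty_{loc}$-close to a piece of the cone, and passing to a further limit along the ray direction the cone piece degenerates to the cylinder. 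One has to be careful that this is genuinely a limit of the \emph{soliton} metrics and not just of cone approximations: this uses the smooth convergence to the cone at infinity (the hypothesis ``asymptotic to the cone'' should be taken in the $C^\infty_{loc}$ sense on the annular ends) together with uniform curvature bounds coming from the soliton structure (Shi-type estimates, or the a priori bounds that hold on ends of shrinking solitons). Once Definition~\ref{a.c.} is verified, Theorem~\ref{rev-asy-cyl-sgs} applies directly and gives that $(M^n,g)$ is isometric to $(\mathbb{R}\times\mathbb{S}^{n-1},dt^2+h)$ — but then $M^n$ is itself a cylinder, hence its only asymptotic cone is the flat one, forcing $c=1$; and in that case the cone $C(\mathbb{S}^{n-1})$ with $c=1$ is Euclidean space. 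Wait — more carefully: a cylinder's asymptotic cone is a ray, not $\mathbb{R}^n$, so one must instead argue that the only shrinking soliton asymptotic to a cone of this form is the Gaussian soliton on $\mathbb{R}^n$ (with $f=|x|^2/4$), which forces $c=1$ and the metric to be Euclidean; the cylinder appears as the model only inside the neck-rescaling step, not as the final answer.

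Let me reorganize the conclusion to avoid that confusion. The clean logical flow is: (1) show the soliton, rescaled appropriately, satisfies the asymptotically cylindrical hypothesis \emph{unless} the cone is already flat; (2) if it is asymptotically cylindrical, Theorem~\ref{rev-asy-cyl-sgs} says it is the round cylinder, whose asymptotic cone is one-dimensional, contradicting that it was assumed asymptotic to $C(\mathbb{S}^{n-1})$ with $n\ge 2$; (3) therefore the cone must be flat, i.e. the rescaling-to-cylinder step fails only when $c=1$, forcing rotational symmetry with a \emph{conical} (not cylindrical) end, and by the classification of rotationally symmetric shrinkers~\cite{kotschwar} — or directly, since a rotationally symmetric shrinker asymptotic to the flat cone is the Gaussian soliton — we get $c=1$ and $g$ Euclidean. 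I expect the main obstacle to be making Step~(1) precise: one needs to show that \emph{every} divergent sequence of basepoints, not merely those along a fixed ray, produces a cylinder limit after rescaling — equivalently, that the rescaled soliton has no ``non-neck'' directions at infinity — and this requires controlling the geometry of the soliton uniformly on all of its end, using the soliton identities to upgrade the assumed $C^0$ or pointed-Gromov--Hausdorff convergence to the cone into the $C^\infty_{loc}$ convergence needed to feed into Definition~\ref{a.c.}. The rescaling bookkeeping (tracking how $f$, $\nabla f$, and the soliton constant $\lambda$ transform, and choosing the scale $f(x_k)^{-1}$ so that the limit soliton is normalized with $\lambda=1/2$) is routine but must be done carefully so that the limiting metric $h$ is normalized as in the statement of Theorem~\ref{rev-asy-cyl-sgs}.
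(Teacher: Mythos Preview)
There is a genuine gap in your approach. You try to verify Definition~\ref{a.c.} for the soliton (or some rescaling of it) and then apply Theorem~\ref{rev-asy-cyl-sgs} as a black box, but this cannot work: an asymptotically \emph{conical} shrinker is never asymptotically \emph{cylindrical} in the sense of Definition~\ref{a.c.}. That definition takes pointed Cheeger--Gromov limits with \emph{no rescaling} of the metric. For a metric asymptotic to $dr^2+(cr)^2 g_{\mathbb{S}^{n-1}}$ the curvature decays like $r^{-2}$, so every pointed limit along a divergent sequence is flat $\mathbb{R}^n$, not a cylinder, regardless of whether $c=1$ or $c\neq 1$. Your dichotomy in Step~(1) (``asymptotically cylindrical unless the cone is flat'') is therefore false: the cylindrical hypothesis fails for \emph{every} value of $c$. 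Rescaling the metric by a fixed constant does not change this, and rescaling by a sequence of factors does not produce a single Riemannian manifold to which Theorem~\ref{rev-asy-cyl-sgs} could apply.

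What the paper actually intends by ``combining our proof with rescaling arguments'' is to re-run the \emph{method} of Sections~\ref{1-cov-der}--\ref{Conc} in the conical setting, not to invoke Theorem~\ref{rev-asy-cyl-sgs} directly. The rescaling enters inside the argument: the level sets $\{f=t\}$, after rescaling by $t^{-1/2}$, converge to a round sphere (this replaces the Riemann--Cartan step at the end of Section~\ref{1-cov-der}), which supplies the approximate Killing fields of Section~\ref{Alm-Kill-Fiel}. The barrier arguments of Sections~\ref{Vec-Interpol} and~\ref{Rig-Lic-Equ} (using $v=f-n/2$ and the eigenvalue equation $\Delta_f v=-v$) are insensitive to whether the asymptotics are cylindrical or conical, so they carry over essentially unchanged and yield $n(n-1)/2$ genuine Killing fields orthogonal to $\nabla f$. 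Rotational symmetry follows, and then Kotschwar's classification~\cite{kotschwar} forces the soliton to be the Gaussian on $\mathbb{R}^n$, hence $c=1$. The paper does not spell out these modifications, but this is the route it points to; your black-box reduction cannot be salvaged.
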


We would like to mention that our method applies also to the class of expanding gradient Ricci solitons (EGS for short). We recall that an EGS is a Riemannian manifold $(M^n, g)$ satisfying 
$$
\Ric \, - \, \nabla^{2}f \,=\,- \frac{1}{2}\, g \,,
$$
for some smooth function $f$ defined on $M^{n}$. Such solutions naturally arise as blow-up of noncompact noncollapsed Type III singularities with nonnegative curvature operator according to the work of Schulze and Simon \cite{Sch-Sim}. As a consequence of their work, the study of the asymptotic geometry of noncompact noncollapsed Riemannian manifolds with nonnegative curvature operator reduces to the classification of (the asymptotic cones of) nonnegatively curved EGS.
Now, Bryant, in unpublished notes, has also built  a one-parameter family of rotational symmetric EGS on $\mathbb{R}^{n}$ for $n\geq 3$ asymptotic to $(C(\mathbb{S}^{n-1}),dr^2+(cr)^2g_{\mathbb{S}^{n-1}}),$ with $c>0$ (see e.g.~\cite[Section 5, Chap.1]{Cho-Lu-Ni-I}). Therefore, it is natural to ask for the analogue of the Kotschwar-Wang theorem in this setting. With the above notation, this amounts to prove that a gradient expanding Ricci soliton which is asymptotic to the cone $(C(\mathbb{S}^{n-1}),dr^2+(cr)^2g_{\mathbb{S}^{n-1}})$ is rotationally symmetric. Recently, Chodosh \cite{Cho-EGS} answered positively in the case the metric has nonnegative curvature operator. Apparently this assumption is only needed in the proof of~\cite[Proposition 5.1]{Cho-EGS}. This proposition can be replaced following our arguments in Section~\ref{Rig-Lic-Equ}, where the key ingredient is the existence of an eigenfunction of $\Delta_{f}$. In the EGS case, such a function is given by $f+n/2$. Eventhought, an estimate à la Cao-Zhou \cite{cao-zho} does not hold for a general EGS, it is known that for asymptotical conical EGS, the potential function still behaves like $r^2/4$, where $r$ is the distance to a fixed point, since the curvature is decaying quadratically (see \cite{Der-Che} for more details). Hence, $f+n/2$ can still play the role of a barrier function in the proof of all the a priori estimates. In particular, we can get rid of the positive curvature assumption in~\cite{Cho-EGS}, obtaining the following theorem.

\begin{teo}
Any gradient expanding Ricci soliton which is asymptotic to the cone $$(C(\mathbb{S}^{n-1}),dr^2+(cr)^2g_{\mathbb{S}^{n-1}}),$$ where $g_{\mathbb{S}^{n-1}}$ is the metric of constant sectional curvature $1$ and $c>0$ is rotationally symmetric.
\end{teo}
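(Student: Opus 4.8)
The plan is to show directly that $(M^n,g)$ is rotationally symmetric --- which is the whole content of the statement --- by adapting to the expanding setting the scheme used to reduce Theorem~\ref{rev-asy-cyl-sgs} to rotational symmetry; equivalently, one runs Chodosh's argument~\cite{Cho-EGS} and replaces its single use of nonnegative curvature, namely~\cite[Proposition~5.1]{Cho-EGS}, by the Lichnerowicz rigidity of Section~\ref{Rig-Lic-Equ}. The two inputs that make this possible are supplied by~\cite{Der-Che}: an expanding soliton asymptotic to $(C(\mathbb{S}^{n-1}),dr^2+(cr)^2 g_{\mathbb{S}^{n-1}})$ has quadratic curvature decay and potential function $f\sim r^2/4$ at infinity ($r$ being the distance to a fixed point), which substitutes for the Cao--Zhou estimate~\cite{cao-zho} of the shrinking case; and $v:=f+n/2$ is a positive eigenfunction of the drift Laplacian $\Delta_f$ in the expanding normalization, substituting for $f-n/2$. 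In particular $X:=\nabla f$ is, up to lower order terms, radial with $|X|\sim r/2$.

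First I would build $n(n-1)/2$ \emph{almost Killing} vector fields at infinity. The cross-section $(\mathbb{S}^{n-1},c^2 g_{\mathbb{S}^{n-1}})$ of the asymptotic cone is a round sphere carrying $n(n-1)/2$ independent rotational Killing fields; pulling them back through the asymptotic diffeomorphism and cutting them off yields vector fields $U_i$ on $M^n$ with $|U_i|\sim r$ and with $\Phi(U_i)$ decaying fast, where $\Phi$ is the expanding analogue of the operator of Proposition~\ref{prop-ex-alm-kill}: a Killing field $U$ on an expanding soliton satisfies $\Delta U+\nabla_U X-\tfrac12 U=0$, and one works with $\Phi(U):=\Delta U+\nabla_X U-\tfrac12 U$. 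Here the quadratic decay of $\Rm$ is precisely what forces the curvature error terms in $\Phi(U_i)$ to decay fast.

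Since $f\sim r^2/4$, the potential function serves again as a barrier for a maximum principle at infinity in the relevant weighted spaces, so the surjectivity of Theorem~\ref{theo-ex-app-kill} holds here (the analogue of Proposition~\ref{max-ppe-vec}): there are fast-decaying vector fields $V_i$ with $\Phi(V_i)=\Phi(U_i)$, and hence the $W_i:=U_i-V_i\in\ker\Phi$ are nontrivial, being asymptotic to the rotational fields of the cone. To upgrade them to genuine Killing fields one studies the rigidity of the Lichnerowicz equation --- the step replacing~\cite[Proposition~5.1]{Cho-EGS}: the symmetric $2$-tensors $h_i:=\Li_{W_i}g$ solve the Lichnerowicz-type equation attached to the expanding soliton (the expanding analogue of $\Li_X h-h=\Delta_L h$), and using $v=f+n/2$ as a barrier one proves the a priori estimate corresponding to Proposition~\ref{c0-est-lic} and then the rigidity of Theorem~\ref{triv-ker}: the only polynomially decaying solution is, up to a homothety, $\Li_X g$. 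Hence there exist constants $\lambda_i$ with $\Li_{W_i-\lambda_i X}g=0$.

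Finally, the Killing fields $\bar U_i:=W_i-\lambda_i X$, $i=1,\dots,n(n-1)/2$, are linearly independent and orthogonal to $X$; they span a Lie algebra of Killing fields isomorphic to $\mathfrak{so}(n)$ whose flows define an isometric $SO(n)$-action with orbits asymptotic to the round spheres $\{r=\mathrm{const}\}$ of the model cone, so that $(M^n,g)$ is rotationally symmetric. The main obstacle is the Lichnerowicz rigidity of the third step: without curvature positivity the curvature term in the Bochner-type identity governing $\Delta_L$ no longer has a favourable sign, so it must be controlled through the quadratic decay of $\Rm$ and absorbed using the barrier $v=f+n/2$ and its $\Delta_f$-eigenfunction property, exactly as in Section~\ref{Rig-Lic-Equ}.
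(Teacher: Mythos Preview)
Your proposal is correct and follows essentially the same approach as the paper. The paper does not give a separate proof of this theorem; its argument is the paragraph preceding the statement, which says precisely that one runs Chodosh's proof~\cite{Cho-EGS}, replacing the single use of nonnegative curvature in~\cite[Proposition~5.1]{Cho-EGS} by the Lichnerowicz rigidity of Section~\ref{Rig-Lic-Equ}, with the barrier $v=f+n/2$ and the asymptotics $f\sim r^2/4$ from~\cite{Der-Che} substituting for the Cao--Zhou estimate~\cite{cao-zho} --- exactly your outline.
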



\

\

\begin{ackn} 
The second author is supported by the EPSRC on a Programme Grant entitled ``Singularities of Geometric Partial Differential Equations'' (reference number EP/K00865X/1).
The third author is partially supported by the Italian project FIRB 2012 ``Geometria Differenziale e Teoria Geometrica delle Funzioni'' as well as by the SNS project ``Geometric flows and related topics''. We are grateful to Otis Chodosh for a careful reading of the first version of the manuscript and for pointing out some inaccuracies.
\end{ackn}

\

\section{Asymptotic geometry}
\label{1-cov-der}

We start by recalling some basic and well known curvature identities that hold on a gradient shrinking Ricci soliton. A proof of these identities can be found for example in~\cite{Emi-LaN-Man}.
\begin{lemma}
\label{id-SGS}
Let $(M^n, g, f)$ be a gradient shrinking Ricci soliton. Then, setting $X:=\nabla f$, the following identities hold true.
\begin{eqnarray}
\Delta f \, + \, \RRR & = & \frac{n}{2} \,, \label{equ:1} \\
\nabla \RRR & = & 2\Ric(X , \, \cdot \,) \, , \label{equ:2} \\
\arrowvert X \arrowvert^2\, + \, \RRR & = & f, \label{equ:3}\\
\div\Rm \, (Y,Z,W) & = & \Rm \, (Y,Z, W,X),\label{equ:4}
\end{eqnarray}
for every vector fields $Y$, $Z$, $W$. Moreover, setting  $\Delta_f:=\Delta-\nabla_X$, we have that
\begin{eqnarray}
\Rm & = & \Delta_f \Rm \, + \,\Rm\ast\Rm,\label{equ:5}\\
\Ric & = & \Delta_f\Ric \, + \, 2\Rm\ast\Ric,\label{equ:6}\\
\RRR & = & \Delta_f \RRR \, + \, 2\arrowvert\Ric\arrowvert^2\label{equ:7},
\end{eqnarray}
where, if $A$ and $B$ are two tensors, $A\ast B$ denotes some linear combination of contractions of the tensorial product of $A$ and $B$.
\end{lemma}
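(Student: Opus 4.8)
The plan is to derive all seven identities directly from the soliton equation \eqref{equ:0}, the contracted second Bianchi identity, and the commutation formulas for covariant derivatives; no appeal to the Ricci flow is needed.

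For \eqref{equ:1} I would simply trace \eqref{equ:0} with respect to $g$, using $\tr\Ric=\RRR$, $\tr\nabla^2 f=\Delta f$ and $\tr g=n$. For \eqref{equ:2}, write the soliton equation as $R_{ij}=\tfrac12 g_{ij}-\nabla_i\nabla_j f$ and take its divergence: the contracted second Bianchi identity $\div\Ric=\tfrac12\nabla\RRR$ handles the left-hand side, while on the right the commutation identity $\Delta\nabla_j f=\nabla_j\Delta f+R_{jk}\nabla^k f$ together with \eqref{equ:1} (in the form $\nabla\Delta f=-\nabla\RRR$) yields $\nabla\RRR=2\Ric(X,\cdot)$ after rearranging. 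Identity \eqref{equ:3} then follows by integration: differentiating $|X|^2$ and inserting \eqref{equ:0} and \eqref{equ:2} shows that $\nabla(|X|^2+\RRR-f)=0$, so the quantity is constant, and the constant is absorbed into the standard normalization of the potential $f$.

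For \eqref{equ:4} I would again substitute $R_{ij}=\tfrac12 g_{ij}-\nabla_i\nabla_j f$ into the full contracted Bianchi identity $\nabla^m R_{mijk}=\nabla_j R_{ik}-\nabla_k R_{ij}$. The metric terms drop out and one is left with a difference of third covariant derivatives of $f$; using the symmetry of the Hessian and commuting the two outermost derivatives produces exactly one Riemann curvature term contracted against $\nabla f=X$, which is the asserted identity (up to the index conventions fixing the slots of $Y,Z,W$).

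The second-order identities \eqref{equ:5}--\eqref{equ:7} are the main computational effort. Identity \eqref{equ:7} is the most direct: taking the divergence of \eqref{equ:2}, using the contracted Bianchi identity once more on $\div\Ric$, and substituting the Hessian via \eqref{equ:0} gives $\tfrac12\Delta\RRR=\tfrac12\nabla_X\RRR+\tfrac12\RRR-|\Ric|^2$, which rearranges to $\Delta_f\RRR=\RRR-2|\Ric|^2$. For \eqref{equ:5} and \eqref{equ:6} the strategy is to commute the operator $\Delta_f=\Delta-\nabla_X$ past the curvature: one computes $\nabla_X\Rm$ from the soliton structure (differentiating \eqref{equ:4} and using the second Bianchi identity), computes $\Delta\Rm$ from the Weitzenb\"ock/Simons-type formula for the Riemann tensor, and matches the two. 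All the curvature terms that are quadratic and involve no derivatives of $f$ collect into the schematic products $\Rm\ast\Rm$ and $2\Rm\ast\Ric$, giving \eqref{equ:5} and then \eqref{equ:6} by tracing. I expect the bookkeeping of these quadratic curvature contractions, together with the careful use of the commutation formula for $\nabla_X$ acting on a curvature tensor, to be the genuine obstacle; everything else is a direct consequence of tracing, the Bianchi identities, and the symmetry of $\nabla^2 f$.
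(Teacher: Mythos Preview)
Your outline is correct and follows the standard derivation of these identities. The paper itself does not supply a proof of this lemma: it simply refers to the literature (specifically \cite{Emi-LaN-Man}), so there is no ``paper's own proof'' to compare against beyond that citation. What you have sketched is essentially the argument one finds in such references: trace \eqref{equ:0} for \eqref{equ:1}; take the divergence and use the contracted second Bianchi identity together with the Ricci commutation formula for \eqref{equ:2}; integrate for \eqref{equ:3}; substitute the soliton equation into the full contracted Bianchi identity and commute third derivatives of $f$ for \eqref{equ:4}; and for \eqref{equ:5}--\eqref{equ:7} combine the second Bianchi identity with Weitzenb\"ock-type commutation formulas, collecting the quadratic curvature terms schematically. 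Your identification of the bookkeeping in \eqref{equ:5}--\eqref{equ:6} as the only nontrivial part is accurate; the rest is routine.
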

\begin{rem}
We observe that the general form of identity~\eqref{equ:3} is $\arrowvert X \arrowvert^2\, + \, \RRR \, = \, f \, + \, c$, where $c\in \RR$ is a real constant. In the rest of this paper we will systematically make the normalization assumption $c=0$.
\end{rem}

%
%
%
%
%

We recall the following growth estimate on the potential function of a noncompact gradient shrinking soliton due to Cao-Zhou \cite{cao-zho}.

\begin{lemma} Let $(M^n, g, f)$ be a complete noncompact gradient shrinking Ricci soliton. Then, the potential function $f$ satisfies the estimates
$$
\frac{1}{4} (r(x)-c_1)^2\leq f(x)\leq \frac{1}{4} (r(x)+c_2)^2 \,,
$$
where $r(x)=d(x_0, x)$ is the distance function from some fixed point $x_0\in M$, $c_1$ and $c_2$ are positive constants depending only on $n$ and the geometry of $g$ on the unit ball $B(x_0,1)$.
\end{lemma}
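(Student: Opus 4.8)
The plan is to prove the two inequalities separately, the upper bound being elementary and the lower bound resting on a second-variation argument along minimizing geodesics. The two structural inputs I will use throughout are identity \eqref{equ:3}, which in the normalization $c=0$ reads $|\nabla f|^2+\RRR=f$, together with the nonnegativity of the scalar curvature $\RRR\ge 0$, valid on any complete gradient shrinking soliton (a theorem of B.-L. Chen). Combined, these give $f\ge 0$ and $|\nabla f|^2\le f$ everywhere, which is exactly what drives both bounds.

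For the upper bound I would note that $|\nabla f|\le\sqrt f$ forces $|\nabla\sqrt f|\le \tfrac12$ wherever $f>0$ (the inequality extends across the zero set by working with $\sqrt{f+\eps}$ and letting $\eps\to0$). Integrating this gradient bound along a minimizing geodesic from $x_0$ to $x$ yields $\sqrt{f(x)}\le\sqrt{f(x_0)}+\tfrac12 r(x)$, and squaring gives the right-hand inequality with $c_2=2\sqrt{f(x_0)}$, a quantity controlled by the geometry on $B(x_0,1)$.

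The lower bound is the substantial part. Let $\gamma\colon[0,r_0]\to M$ be a unit-speed minimizing geodesic from $x_0$ to $x$, with $r_0=r(x)$, and set $\varphi(s):=f(\gamma(s))$. Contracting the soliton equation \eqref{equ:0} with $\dot\gamma\otimes\dot\gamma$ produces the basic identity $\varphi''(s)=\tfrac12-\Ric(\dot\gamma,\dot\gamma)$, so that a quadratic lower bound for $\varphi(r_0)$ will follow from the representation $\varphi'(s)=\varphi'(0)+\tfrac{s}{2}-\int_0^s\Ric(\dot\gamma,\dot\gamma)\,d\tau$ once the accumulated Ricci curvature $\int_0^{s}\Ric(\dot\gamma,\dot\gamma)\,d\tau$ is bounded from above, uniformly in $s\le r_0$. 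For this I would invoke the index-form inequality: since $\gamma$ is minimizing, for every piecewise-smooth $\phi$ with $\phi(0)=\phi(s)=0$ one has $\int_0^{s}\Ric(\dot\gamma,\dot\gamma)\phi^2\,d\tau\le (n-1)\int_0^{s}(\phi')^2\,d\tau$. Choosing the standard tent cutoff ($\phi=\tau$ on $[0,1]$, $\phi\equiv1$ on $[1,s-1]$, $\phi=s-\tau$ on $[s-1,s]$) and substituting $\Ric(\dot\gamma,\dot\gamma)=\tfrac12-\varphi''$, an integration by parts converts the estimate into one involving only the boundary data $\varphi'=\langle\nabla f,\dot\gamma\rangle$ at the two endpoints. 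The contribution near $x_0$ is bounded by $\max_{B(x_0,1)}f$ and $\max_{B(x_0,1)}|\Ric|$, i.e. by the geometry on the unit ball. Feeding the resulting uniform bound $\int_0^{s}\Ric(\dot\gamma,\dot\gamma)\,d\tau\le C(n,x_0)$ back into the representation for $\varphi'$ gives $\varphi'(s)\ge \tfrac{s}{2}-C$, and a final integration yields $f(x)=\varphi(r_0)\ge \tfrac14(r_0-c_1)^2$.

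I expect the main obstacle to be precisely the control of the Ricci integral near the far endpoint $\gamma(s)$, where, unlike at $x_0$, no a priori curvature bound is available at infinity. The resolution is structural rather than analytic: one trades the missing pointwise Ricci bound for the soliton identity $\Ric(\dot\gamma,\dot\gamma)=\tfrac12-\varphi''$, so that the dangerous endpoint contribution becomes a boundary term in $\varphi'$. The crux is then that the index inequality itself forces $\langle\nabla f,\dot\gamma\rangle$ to be both nearly maximal (close to $\sqrt f$) and nearly constant over the terminal unit interval; the two linear-in-$s$ boundary terms of opposite sign cancel to leading order, leaving an $O(1)$ contribution. This near-maximality is self-improving in the sense that it both uses and reinforces the quadratic growth of $\varphi$, so the estimate is most cleanly closed by a short bootstrap. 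Handling the harmless zero set of $f$ and tracking the explicit dependence of $c_1,c_2$ on $n$ and on $B(x_0,1)$ is then routine bookkeeping.
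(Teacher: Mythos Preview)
The paper does not prove this lemma; it is quoted from Cao--Zhou \cite{cao-zho} without argument. Your outline is exactly the Cao--Zhou strategy: the upper bound via $|\nabla\sqrt f|\le\tfrac12$, and the lower bound via the index inequality along a minimizing geodesic combined with the soliton identity $\varphi''=\tfrac12-\Ric(\dot\gamma,\dot\gamma)$.

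Your upper bound is correct. In the lower bound, however, the way you close the argument is not right. The integration by parts you describe does \emph{not} produce a uniform bound $\int_0^s\Ric(\dot\gamma,\dot\gamma)\,d\tau\le C(n,x_0)$: that bound is equivalent to $\varphi'(s)\ge s/2-C$, which is precisely what you are trying to prove, so invoking it as an intermediate step is circular. Nor is there a cancellation of two linear-in-$s$ boundary terms at the far end, and no bootstrap is needed. After substituting $\Ric=\tfrac12-\varphi''$ into $\int_0^{r_0}\phi^2\Ric\le 2(n-1)$ and integrating by parts (the boundary terms vanish since $\phi(0)=\phi(r_0)=0$), the only far-endpoint contribution is the single term $\int_{r_0-1}^{r_0}2(r_0-s)\varphi'\,ds$, and the inequality rearranges to
\[
\int_{r_0-1}^{r_0}2(r_0-s)\,\varphi'(s)\,ds \;\ge\; \frac{r_0}{2}-C(n,x_0).
\]
Since $\varphi'\le\sqrt\varphi$ (from $\RRR\ge 0$ and \eqref{equ:3}) and $\int_{r_0-1}^{r_0}2(r_0-s)\,ds=1$, the left side is at most $\max_{[r_0-1,r_0]}\sqrt\varphi$; together with $|(\sqrt\varphi)'|\le\tfrac12$ this yields $\sqrt{f(x)}\ge r(x)/2-c_1/2$ in one step. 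So your ingredients are the right ones, but the closing mechanism is simpler and more direct than the self-improving bootstrap you anticipate.
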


From now on, we assume that our complete noncompact gradient shrinking Ricci soliton $(M^n,g,f)$ is {\em asymptotically cylindrical} in the sense of Definition~\ref{a.c.}. In order to give a more careful estimate  of how the soliton metric converges to the cylindrical one, it is convenient to introduce the following tensor
\begin{eqnarray}
\label{T}
T \,\, := \,\, \Ric \,\, - \, \, \frac{\RRR}{n-1} \,  \bigg(\,  g \, - \,  \frac{df \otimes df}{|df|^2}  \, \bigg) \, .
\end{eqnarray}
We observe that the tensor $T$ is well defined whenever $\nabla f \neq 0$. In particular, from the results in~\cite{cao-zho}, we have that $T$ is well defined outside a compact set. 

\begin{lemma} 
\label{Ttozero}
Let $(M^n, g, f)$ be a complete noncompact asymptotically cylindrical gradient shrinking Ricci soliton and let $T$ be the tensor defined in~\eqref{T} outside a compact set. Then, we have that $|T| = o \, (1)$, as $f \to +\infty$. This means that for every sequence of points $(x_k)_{k\in \mathbb{N}}$ such that $f(x_k) \to +\infty$, as $k\to +\infty$, one has that $|T| (x_k) \to 0$.
\end{lemma}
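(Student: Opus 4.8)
The plan is to run a pointed Cheeger--Gromov limiting argument built directly on Definition~\ref{a.c.}, the only subtlety being that one must keep track of the unnormalized vector field $X=\nabla f$, which blows up along the way but enters $T$ only through its normalized direction.

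Fix a sequence $(x_k)$ with $f(x_k)\to+\infty$; then $x_k$ leaves every compact set, so by Definition~\ref{a.c.} there are diffeomorphisms $\phi_k\colon B_{g_\infty}(x_\infty,R_k)\to M$ with $R_k\to+\infty$ and $\phi_k(x_\infty)=x_k$, such that $\phi_k^{*}g\to g_\infty:=dt^2+h$ in $C^{\infty}\loc$ on $\mathbb{R}\times\mathbb{S}^{n-1}$, where $\Ric_h=\tfrac12 h$. Consequently $\phi_k^{*}\Ric\to\Ric_{g_\infty}$, $\phi_k^{*}\RRR\to\RRR_{g_\infty}$ and $\phi_k^{*}\nabla\RRR\to\nabla\RRR_{g_\infty}$ in $C^{\infty}\loc$; on the limit cylinder $\RRR_{g_\infty}\equiv\tfrac{n-1}{2}$ is constant, $\Ric_{g_\infty}$ vanishes on $\partial_t$ and restricts to $\tfrac12 h$ on the spherical factor, so $g_\infty-dt\otimes dt=h=2\,\Ric_{g_\infty}$ and $\ker\Ric_{g_\infty}=\mathbb{R}\,\partial_t$. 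Moreover, by \eqref{equ:3} (normalization $c=0$) we have $|X|^2=f-\RRR$; since $\RRR(x_k)\to\tfrac{n-1}{2}$ and $f(x_k)\to+\infty$, this gives $|X|(x_k)\to+\infty$, and the same identity together with $\big|\nabla|X|\big|\le\tfrac12+|\nabla\RRR|/(2|X|)$ shows that $|X|\circ\phi_k\to+\infty$ locally uniformly (in particular $X\neq0$ near $x_k$, so $T$ is well defined there).

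The next step is to identify the limit of the unit field $\nu:=X/|X|$, through which $f$ enters $T$ besides via $\RRR$. From $\nabla\nu=|X|^{-1}\nabla X-|X|^{-1}\,\nu\otimes\nabla|X|$, together with $\nabla X=\nabla^2 f=\tfrac12 g-\Ric$ and the bound on $\nabla|X|$ above, each term is a bounded quantity divided by $|X|\to\infty$; hence the fields $\phi_k^{*}\nu$ are $\phi_k^{*}g$-unit with covariant derivatives tending to $0$ in $C^{0}\loc$, so they are precompact in $C^{0}\loc$ by Arzel\`a--Ascoli. Dividing identity~\eqref{equ:2} by $|X|$ gives $\Ric(\nu,\cdot)=\tfrac{1}{2}|X|^{-1}\nabla\RRR$, whose pullback tends to $0$ in $C^{0}\loc$; passing to a subsequential limit $\phi_k^{*}\nu\to\nu_\infty$ we obtain $\Ric_{g_\infty}(\nu_\infty,\cdot)=0$, and since $\nu_\infty$ is a unit vector in $\ker\Ric_{g_\infty}=\mathbb{R}\,\partial_t$ we conclude $\nu_\infty=\pm\partial_t$. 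Hence $(\phi_k^{*}\nu)^{\flat}\otimes(\phi_k^{*}\nu)^{\flat}\to dt\otimes dt$ in $C^{0}\loc$, independently of the sign.

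Finally, since $\dfrac{df\otimes df}{|df|^2}=\nu^{\flat}\otimes\nu^{\flat}$, pulling back the definition~\eqref{T} yields
\[
\phi_k^{*}T \;=\; \phi_k^{*}\Ric \,-\, \frac{\phi_k^{*}\RRR}{\,n-1\,}\Big(\phi_k^{*}g \,-\, (\phi_k^{*}\nu)^{\flat}\otimes(\phi_k^{*}\nu)^{\flat}\Big)\;\xrightarrow[k\to\infty]{}\;\Ric_{g_\infty}\,-\,\tfrac12\big(g_\infty - dt\otimes dt\big)\;=\;0
\]
in $C^{0}\loc$, because $g_\infty-dt\otimes dt=2\,\Ric_{g_\infty}$. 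Evaluating at $x_\infty$ gives $|T|(x_k)=\big|\phi_k^{*}T\big|_{\phi_k^{*}g}(x_\infty)\to0$, and since the reasoning applies along every subsequence of $(x_k)$ the full sequence $|T|(x_k)$ converges to $0$, which is the claim. The main obstacle, and the only point requiring care, is the third paragraph: controlling $\nu=X/|X|$ in the limit even though $X$ itself is unbounded, the key observation being that $X$ enters $T$ only through the scale-invariant direction $\nu$, whose covariant derivative is annihilated by the blow-up of $|X|$, while the direction itself is pinned to the cylinder axis by identity~\eqref{equ:2}.
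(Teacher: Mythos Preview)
Your proof is correct and takes a genuinely different route from the paper's. The paper expands $|T|^2$ algebraically: using the trace identity together with~\eqref{equ:2} one obtains
\[
|T|^2 \;=\; |\Ric|^2 - \frac{\RRR^2}{n-1} + \frac{\RRR}{n-1}\,\frac{\langle\nabla\RRR,\nabla f\rangle}{|\nabla f|^2},
\]
and each scalar summand is then shown to be $o(1)$ by reading off its value on the limit cylinder together with $|\nabla f|^2=O(f)$. This entirely bypasses the unit direction $\nu=X/|X|$: the only place $\nu$ would enter is through $\Ric(\nu,\nu)$, and the identity $2\,\Ric(\nabla f,\nabla f)=\langle\nabla\RRR,\nabla f\rangle$ converts that into scalar data. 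You instead pull back the full tensor $T$ term by term under the Cheeger--Gromov diffeomorphisms, which forces you to control $\nu$ itself in the limit; you handle this cleanly by noting that $|\nabla\nu|=O(|X|^{-1})$ to get precompactness, and then pinning the limit $\nu_\infty$ to the cylinder axis via $\Ric_{g_\infty}(\nu_\infty,\cdot)=0$. The paper's argument is shorter and dovetails directly with the subsequent quantitative estimates (Lemma~\ref{T_decay} and Proposition~\ref{PDE}), while yours is more geometric and makes transparent \emph{why} $T$ vanishes in the limit: it is literally the discrepancy between $\Ric$ and its cylindrical model, and every ingredient of that model---including the axial direction---is recovered by the Cheeger--Gromov convergence.
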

\begin{proof}
We start by computing the quantity $|T|^2$.
\begin{eqnarray*}
|T|^2 & = & |\Ric|^2 \, - \frac{\RRR^2}{n-1}  \, + \, \frac{ \RRR}{n-1} \, \frac{2 \, \Ric (\nabla f, \nabla f)}{|\nabla f|^2} \\
& = & |\Ric|^2 \, - \frac{\RRR^2}{n-1} \, + \,  \frac{ \RRR}{n-1} \, \frac{ \langle \nabla \RRR \,, \nabla f \rangle}{|\nabla f|^2} \, ,
\end{eqnarray*}
where we used the identity~\eqref{equ:3} in the last equality. By the fact that the soliton is asymptotically cylindrical, it is immediate to deduce that  $|\Ric|^2 \, - {\RRR^2}/({n-1}) = o \,(1)$, for $f \to + \infty$. For the same reason, we have that $|\nabla \RRR| = o \, (1)$, as $f\to + \infty$, whereas, by the results in~\cite{cao-zho}, one has that $|\nabla f|^2 = O\,(f)$, as $f\to + \infty$. From these facts, we infer that also the third term in the right hand side tends to zero, as $f\to + \infty$.
\end{proof}

So far, we have shown that $|T| = o\, (1)$ for $f\to +\infty$. In order to improve this estimate we state the following lemma, in which we prove some basic but useful properties of the tensor $T$.

\begin{lemma} 
\label{T_decay}
Let $(M^n, g, f)$ be a complete noncompact asymptotically cylindrical gradient shrinking Ricci soliton and let $T$ be the tensor defined in~\eqref{T} outside a compact set. Then, setting $\mathbf{n} := X/|X| = \nabla f/ |\nabla f|$, we have 
\begin{eqnarray}
\tr T=0 \, ,\quad\quad |\,T(\mathbf{n}, \cdot \,)\,| =\textit{o}\, (f^{-1/2}) \quad \hbox{and}\quad T(\mathbf{n},\mathbf{n})=\textit{o}\, (f^{-1}) \, ,
\end{eqnarray}
as $f\rightarrow + \infty$.
\end{lemma}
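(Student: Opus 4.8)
The plan is to deduce all three assertions from the soliton identities of Lemma~\ref{id-SGS} together with the smooth Cheeger--Gromov convergence in Definition~\ref{a.c.}. Two consequences of the latter will be used. Since $f$ is proper (by the Cao--Zhou estimate recalled above), a sequence of points tends to infinity precisely when $f\to+\infty$ along it; and along any such sequence the metric, hence also $\RRR$, $|\Ric|^2$, $\nabla\RRR$ and $\Delta\RRR$, converges locally smoothly to the corresponding quantity on the model cylinder $(\mathbb{R}\times\mathbb{S}^{n-1},dt^2+h)$ with $\Ric_h=h/2$, on which $\RRR=\tfrac{n-1}{2}$, $|\Ric|^2=\tfrac{n-1}{4}$ and $\RRR$ is constant. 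Therefore, as $f\to+\infty$,
\[
\RRR\to\tfrac{n-1}{2},\qquad 2|\Ric|^2\to\tfrac{n-1}{2},\qquad |\nabla\RRR|=o(1),\qquad \Delta\RRR=o(1),
\]
and, by~\eqref{equ:3} and the boundedness of $\RRR$ near infinity, $|X|^2=f-\RRR=f\,(1+o(1))$.

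The trace identity is immediate: $\tr g=n$ and $\tr\!\big(df\otimes df/|df|^2\big)=1$, so $\tr T=\RRR-\tfrac{\RRR}{n-1}(n-1)=0$. For the other two estimates the observation to make is that the correction term of $T$ annihilates $\mathbf{n}$ in its first slot: for every vector $Y$,
\[
\Big(g-\tfrac{df\otimes df}{|df|^2}\Big)(\mathbf{n},Y)=g(\mathbf{n},Y)-\frac{\langle\nabla f,\mathbf{n}\rangle\,\langle\nabla f,Y\rangle}{|\nabla f|^2}=g(\mathbf{n},Y)-g(\mathbf{n},Y)=0,
\]
because $\langle\nabla f,\mathbf{n}\rangle=|\nabla f|$. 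Hence $T(\mathbf{n},\cdot)=\Ric(\mathbf{n},\cdot)$, and using~\eqref{equ:2} in the form $\Ric(X,\cdot)=\tfrac12\nabla\RRR$ we get $\Ric(\mathbf{n},\cdot)=\tfrac1{2|X|}\nabla\RRR$; thus $|T(\mathbf{n},\cdot)|=|\nabla\RRR|/(2|X|)=o(1)\cdot O(f^{-1/2})=o(f^{-1/2})$.

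The last estimate is the delicate point. From the above, $T(\mathbf{n},\mathbf{n})=\Ric(\mathbf{n},\mathbf{n})=\Ric(X,X)/|X|^2=\langle\nabla\RRR,X\rangle/(2|X|^2)$ by~\eqref{equ:2}. The naive bound $|\langle\nabla\RRR,X\rangle|\le|\nabla\RRR|\,|X|=o(1)\cdot O(f^{1/2})$ only reproduces $o(f^{-1/2})$, so rather than estimating $\nabla_X\RRR$ directly I would exploit a cancellation coming from~\eqref{equ:7}: writing
\[
\nabla_X\RRR=\Delta\RRR-\Delta_f\RRR=\Delta\RRR-\big(\RRR-2|\Ric|^2\big),
\]
both summands on the right tend to $0$ as $f\to+\infty$, the first by smooth Cheeger--Gromov convergence and the second because $\RRR\to\tfrac{n-1}{2}$ and $2|\Ric|^2\to\tfrac{n-1}{2}$. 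Hence $\langle\nabla\RRR,X\rangle=o(1)$, and dividing by $2|X|^2=f\,(1+o(1))$ gives $T(\mathbf{n},\mathbf{n})=o(f^{-1})$. The main obstacle here is conceptual, not computational: one has to recognise that the extra half power of $f$ is produced by the cancellation $\Delta\RRR-\RRR+2|\Ric|^2\to0$ rather than by any pointwise control of $\nabla_X\RRR$ through $\nabla\RRR$ alone.
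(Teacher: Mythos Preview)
Your proof is correct and follows essentially the same route as the paper's: both arguments reduce $T(\mathbf{n},\cdot)$ to $\Ric(\mathbf{n},\cdot)$, invoke~\eqref{equ:2} for the $o(f^{-1/2})$ bound, and then use~\eqref{equ:7} to write $\nabla_X\RRR=\Delta\RRR+2|\Ric|^2-\RRR=o(1)$ for the sharper $o(f^{-1})$ estimate on $T(\mathbf{n},\mathbf{n})$. Your write-up is slightly more explicit than the paper's in justifying why the projector $g-df\otimes df/|df|^2$ annihilates $\mathbf{n}$ and in flagging why the naive Cauchy--Schwarz bound on $\langle\nabla\RRR,X\rangle$ is insufficient, but the underlying argument is the same.
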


\begin{proof} The fact that the tensor $T$ is traceless follows immediately from its definition. Using the identities~\eqref{equ:2} and~\eqref{equ:3} in Lemma~\ref{id-SGS}, we get
\begin{eqnarray*}
|\,T(\mathbf{n}, \cdot \, )\,| \, = \, |\Ric(\mathbf{n} , \cdot \, ) \, | \, = \, \frac{|\nabla \RRR|}{2\arrowvert X\arrowvert} \, = \, \textit{o}\, (f^{-1/2}) \, ,
\end{eqnarray*}
where in the last equality we also used the fact that the soliton is {\em asymptotically cylindrical} and thus $|\nabla \RRR| \rightarrow 0$, as $f \to +\infty$. From the asymptotic behavior of the soliton it is also possible to deduce that $\Delta \RRR \to 0$, as $f \to + \infty$. Moreover, by the Definition~\ref{a.c.}, one has $|\Ric - \frac{1}{2}g \,  | \rightarrow 0 $ and thus $2|\Ric|^2 - \RRR \to 0$, as $f \to + \infty$. Combining this with the identities above and with equation~\eqref{equ:7}, we deduce
\begin{eqnarray*}
T(\mathbf{n},\mathbf{n})\,\,= \,\,\Ric(\mathbf{n},\mathbf{n}) \,\, = \,\, \frac{\langle\nabla \RRR,X\rangle}{\arrowvert X\arrowvert^2} \,\, = \,\, \frac{(\Delta \RRR+2\arrowvert \Ric \arrowvert^2-\RRR)}{\arrowvert X\arrowvert^2} \,\, = \,\, \textit{o}\, (f^{-1}) \, .
\end{eqnarray*}
This completes the proof of the lemma.
\end{proof}

In the next proposition, we derive a partial differential inequality for the quantity $|T|^2$. This will then be used to improve the estimates of the decay of $|T|$ at infinity.

\begin{prop}
\label{PDE}
Let $(M^n, g, f)$ be a complete noncompact asymptotically cylindrical gradient shrinking Ricci soliton and let $T$ be the tensor defined in~\eqref{T} outside a compact set. Then, there exists a positive constant $c(n)$, only depending on the dimension $n$, such that
\begin{eqnarray*}
\Delta_f\arrowvert T\arrowvert^2 \,\, \geq \,\, c(n) \, \arrowvert T\arrowvert^2 \, + \, \textit{O}\, (f^{-1}) \,,
\end{eqnarray*}
as $f \to \infty$. Moreover, we can choose $c(n)=2/(n-2) - \eta$ for any positive $\eta$ sufficiently small.
\end{prop}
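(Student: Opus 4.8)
The plan is to derive the differential inequality for $|T|^2$ by plugging the soliton equation into the Bochner-type formula satisfied by the Ricci tensor, namely equation~\eqref{equ:6}, and carefully tracking which terms are genuinely comparable to $|T|^2$ and which are lower-order in $f^{-1}$. First I would write $\Ric = \frac{\Scal}{n-1}\big(g - \mathbf n \otimes \mathbf n\big) + T$, where $\mathbf n = X/|X|$, and think of this as a decomposition of $\Ric$ into a ``cylindrical part'' (which is essentially parallel up to the slowly-varying factor $\Scal/(n-1) = \frac12 + o(1)$) plus the error tensor $T$. Applying $\Delta_f$ to $|T|^2 = |\Ric|^2 - \Scal^2/(n-1) + \frac{\Scal}{n-1}\langle \nabla\Scal,\nabla f\rangle/|\nabla f|^2$ and using Lemma~\ref{id-SGS}, I expect the principal contribution to come from the reaction term in~\eqref{equ:6}. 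Concretely, the Weitzenböck identity gives $\Delta_f |T|^2 = 2\langle \Delta_f \Ric - 2\Rm\ast\Ric, \text{(projected part)}\rangle + 2|\nabla T|^2 + \cdots$; since $\Delta_f\Ric = \Ric - 2\Rm\ast\Ric$, the curvature terms should be evaluated at the cylinder, where $\Rm\ast\Ric$ acting on the trace-free, fiberwise part of $\Ric$ produces a strictly positive multiple of $|T|^2$ — this is exactly where the spectral constant $c(n)$ and the threshold $2/(n-2)$ enter, coming from the action of the curvature operator of the round sphere $\mathbb S^{n-1}$ (normalized to $\Ric_h = h/2$) on trace-free symmetric $2$-tensors.

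The key steps, in order, would be: (i) use Lemma~\ref{T_decay} to control the ``bad'' components $T(\mathbf n,\cdot)$ and $T(\mathbf n,\mathbf n)$, which decay like $o(f^{-1/2})$ and $o(f^{-1})$ respectively, so that up to errors of size $O(f^{-1})$ the tensor $T$ may be treated as the pullback of a trace-free symmetric $2$-tensor on the spherical fiber; (ii) compute $\Delta_f(\Scal^2/(n-1))$ and $\Delta_f$ of the third term in $|T|^2$ using \eqref{equ:2}, \eqref{equ:3}, \eqref{equ:7}, together with the asymptotic facts $\Scal \to \frac{n-1}{2}$, $|\nabla\Scal|\to 0$, $\Delta\Scal\to 0$, $|\nabla f|^2 = O(f)$, and show that these contribute only $O(f^{-1})$; (iii) compute $\Delta_f |\Ric|^2$ via the contracted second Bianchi / Weitzenböck identity built from~\eqref{equ:6}, obtaining $\Delta_f|\Ric|^2 = 2|\nabla\Ric|^2 + 2|\Ric|^2 - 4\,\Rm\ast\Ric\ast\Ric$; (iv) identify the quadratic curvature term $\Rm\ast\Ric\ast\Ric$ on the fiber: writing $\Ric = \frac12 g_{\text{cyl-part}} + T + (\text{errors})$, the cross terms linear in $T$ vanish by the structure of the Lichnerowicz operator on the Einstein fiber, and the quadratic-in-$T$ term equals $\langle \mathring R_h T, T\rangle + O(f^{-1})$, where $\mathring R_h$ is the curvature operator of the round sphere acting on trace-free symmetric $2$-tensors; (v) bound $|\nabla T|^2$ from below by a Kato-type inequality or simply discard it as nonnegative, and estimate $2|\Ric|^2 - 4\,\Rm\ast\Ric\ast\Ric$ from below by $c(n)|T|^2 + O(f^{-1})$ using the spectral gap of $\mathring R_h$; (vi) collect terms to obtain $\Delta_f|T|^2 \ge c(n)|T|^2 + O(f^{-1})$ with $c(n) = 2/(n-2) - \eta$, the loss $\eta$ absorbing the $o(1)$ factors coming from the fact that we are on an asymptotically — not exactly — cylindrical manifold.

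The main obstacle will be step~(iv)–(v): pinning down the exact algebraic form of the reaction term $\Rm\ast\Ric$ acting on $\Ric$, restricted to the trace-free fiberwise part, and proving that its lowest eigenvalue on that space, for the round metric $h$ normalized by $\Ric_h = h/2$, is bounded below by $2/(n-2)$. This requires knowing the precise $\ast$-contractions hidden in~\eqref{equ:6} — i.e. that $\Delta_L \Ric$ appears, where $\Delta_L$ is the Lichnerowicz Laplacian — and then invoking the standard computation of $\Delta_L$ on symmetric trace-free divergence-free $2$-tensors over a space form; the sectional-curvature normalization must be tracked carefully because an error of a constant factor would change the threshold. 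A secondary technical point is to justify uniformity of all the $o(1)$ and $O(f^{-1})$ estimates along arbitrary sequences going to infinity, which follows from Definition~\ref{a.c.} together with Shi-type derivative estimates implicit in smooth Cheeger–Gromov convergence, so that the inequality holds pointwise for $f$ large rather than merely along subsequences.
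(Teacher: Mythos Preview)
Your overall strategy---derive the inequality by feeding $\Delta_f\Ric=\Ric-2\Rm\ast\Ric$ into a Bochner computation and evaluate the reaction term on the asymptotic cylinder---is precisely the paper's approach. The paper organizes things slightly differently: rather than decomposing the scalar $|T|^2=|\Ric|^2-\frac{\RRR^2}{n-1}+\frac{\RRR}{n-1}\frac{\langle\nabla\RRR,\nabla f\rangle}{|\nabla f|^2}$ and applying $\Delta_f$ termwise, it computes $\Delta_f T$ as a tensor (the bulk of the work being the estimate of $\langle\Delta_f(\mathbf n\otimes\mathbf n),T\rangle=O(f^{-1})$) and then contracts with $T$. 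Both routes are viable, though the tensorial one avoids some bookkeeping with gradient terms such as $|\nabla\Ric|^2$ versus $|\nabla T|^2$ and $|\nabla\RRR|^2$, which your outline does not address.

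There is, however, a genuine error in your step~(iv). The claim that ``the cross terms linear in $T$ vanish by the structure of the Lichnerowicz operator on the Einstein fiber'' is false at the required order. Writing $\Ric=A+T$ with $A=\frac{\RRR}{n-1}(g-\mathbf n\otimes\mathbf n)$, one has the \emph{exact} identity $\Rm\ast(g-\mathbf n\otimes\mathbf n)=\Ric-\Rm(\mathbf n,\cdot,\cdot,\mathbf n)$, and since the actual $\Ric$ contains $T$, the cross term is
\[
2\langle\Rm\ast A,T\rangle=\tfrac{2\RRR}{n-1}\,\langle\Ric,T\rangle+O(f^{-1/2})|T|=\tfrac{2\RRR}{n-1}\,|T|^2+o(f^{-1})+o(f^{-1/2})|T|,
\]
which is of the \emph{same} order as the quadratic-in-$T$ piece, not lower. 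Your heuristic would be correct only if one could replace $\Rm$ by the exact cylinder curvature; but the deviation $\Rm-\Rm_{\mathrm{cyl}}$ itself carries $T$ through the Ricci decomposition, so the substitution error is again $|T|^2$-sized. Concretely, in the paper's computation the three $|T|^2$-order pieces are: $+2$ from $\Delta_fT=T+\cdots$, the cross contribution $-\frac{2\RRR}{n-1}\to-2$, and $-4\langle\Rm\ast T,T\rangle\to+\frac{2}{n-2}$; their sum is $\frac{2}{n-2}$. Dropping the cross term would yield $4+\frac{2}{n-2}$, an incorrect (though conveniently larger) constant, and would misidentify where $\frac{2}{n-2}$ actually comes from.
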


\begin{proof}
Using the same notations as in Lemma~\ref{T_decay} and taking advantage of the equations~\eqref{equ:6} and~\eqref{equ:7}, we compute the difference $\Delta_f T-T $, namely
\begin{eqnarray*}
\Delta_f T-T &=& \Delta_f \Ric \, - \, \Ric \, - \, \frac{1}{n-1} \, \big[ \, \Delta_f (\RRR(g-\mathbf{n}\otimes\mathbf{n})) \, - \, \RRR(g-\mathbf{n}\otimes\mathbf{n}) \, \big]\\
&=& -\frac{1}{n-1} \, \big[ \, (\Delta_f \RRR-\RRR)(g-\mathbf{n}\otimes\mathbf{n}) \, + \,  
2\, \nabla_{\nabla \RRR} (\mathbf{n}\otimes\mathbf{n}) \, - \, \RRR \, \Delta_f(\mathbf{n}\otimes\mathbf{n}) \, \big] \\
& & -2\Rm\ast\Ric
\\
&=&\frac{1}{n-1} \, \big[ \, 2\arrowvert\Ric\arrowvert^2(g-\mathbf{n}\otimes\mathbf{n})
\, - \, 
2 \, \nabla_{\nabla \RRR}( \mathbf{n}\otimes\mathbf{n}) \, + \, \RRR \, \Delta_f(\mathbf{n}\otimes\mathbf{n}) \, \big] \\
& & -2\Rm\ast\Ric \, .
\end{eqnarray*}
Now, we recall that $(\Rm\ast\Ric)_{ij}:=\RRR_{iklj}\RRR_{kl}$. Therefore, we get
\begin{eqnarray*}
\langle\Rm\ast\Ric \, ,T\rangle &=& \langle \Rm\ast T \, ,T\rangle \, + \, \frac{R}{n-1} \, \langle\Rm\ast(g-\mathbf{n}\otimes\mathbf{n}) \, ,T\rangle\\
&=&\langle\Rm\ast T \, ,T\rangle \, + \, \frac{\RRR}{n-1} \, \langle \Ric-\Rm(\mathbf{n},\cdot,\cdot,\mathbf{n}) \, ,T\rangle\\
&=&\langle\Rm\ast T \, ,T\rangle \, + \, \frac{\RRR}{n-1} \, \arrowvert T\arrowvert^2 \, - \, \frac{\RRR^2}{(n-1)^2} \, T(\mathbf{n},\mathbf{n}) \, - \, \frac{\RRR}{n-1} \, \langle \Rm(\mathbf{n},\cdot,\cdot,\mathbf{n}) \, , T \rangle \,,
\end{eqnarray*}
where in the last equality we used the fact that the tensor $T$ is traceless.
Since $(M^n,g, \nabla f)$ is {\em asymptotically cylindrical}, we use the Lemma~\ref{Ttozero} and the fact that the Weyl part of the Riemmann tensor tends to zero in order to deduce that
\begin{eqnarray*}
\Rm \,\, = \,\, \frac{\RRR}{(n-1)(n-2)}(g-\mathbf{n}\otimes\mathbf{n})\odot(g-\mathbf{n}\otimes\mathbf{n})\, +\textit{o}(1) \, ,
\end{eqnarray*}
where $\odot$ represents the Kulkarni Nomizu product. As a consequence, we get
\begin{eqnarray*}
\langle\Rm\ast T,T\rangle & = & \frac{\RRR}{(n-1)(n-2)} \, \left[ \, 2 \, |\,T(\mathbf{n}, \cdot)|^2 - \, |T|^2
\, \right] \, +\, \textit{o} \, (\arrowvert T\arrowvert^2)\\
&=&- \, \frac{\RRR}{(n-1)(n-2)} \, \arrowvert T\arrowvert^2 \, +\, \textit{o} \, (\arrowvert T\arrowvert^2) \, + \, \textit{o} \, (f^{-1}) \, ,
\end{eqnarray*}
where in  the last equality we have used the estimates in Lemma~\ref{T_decay}. Moreover, using equation (\ref{equ:4}) shows that 
$$
 \langle \Rm(\mathbf{n},\cdot,\cdot,\mathbf{n}) \, , T \rangle \,\, = \,\, \frac{1}{\arrowvert X\arrowvert} \, \div\Rm\ast T   \,\, = \,\,  o \, (f^{-1/2})\arrowvert T\arrowvert \, .
$$
Taking the sum, we obtain
\begin{eqnarray}
\label{expansion}
\Delta_f\arrowvert T\arrowvert^2-2\arrowvert T\arrowvert^2 & = & 2\arrowvert\nabla T\arrowvert^2  \, - \,  \frac{4(n-3) \, \RRR}{(n-1)(n-2)} \, \arrowvert T\arrowvert^2 \, +\, \textit{o} \, (\arrowvert T\arrowvert^2)+o \, (f^{-1/2})\arrowvert T\arrowvert \,   \nonumber
\\
& & + \, \frac{2}{(n-1)} \, \Big\langle -\, 2 \, \nabla_{\nabla \RRR}( \mathbf{n}\otimes\mathbf{n}) \, + \, \RRR \, (\Delta_f(\mathbf{n}\otimes\mathbf{n}) \, , \, T \, \Big\rangle \, + \, \textit{o} \,(f^{-1}) \, . \hspace{1 cm} 
\end{eqnarray}
In order to proceed, we are going to analyze the asymptotic behavior of the second raw. We claim that
$$
\langle 
-\,2 \, \nabla_{\nabla \RRR}( \mathbf{n}\otimes\mathbf{n}) \, + \, \RRR \,  \Delta_f(\mathbf{n}\otimes\mathbf{n}) \, ,T\rangle \,\, = \,\, \textit{O} \, (f^{-1}) \, ,
$$
as $f\to \infty$. We start with the estimate of the term $\langle
2 \, \nabla_{\nabla \RRR}( \mathbf{n}\otimes\mathbf{n})  \, ,T\rangle$. First, we notice that
\begin{eqnarray*}
\langle \nabla_{\nabla \RRR}( \mathbf{n}\otimes\mathbf{n})  \, ,T\rangle & = & \langle \nabla_{\nabla\RRR} \, \mathbf{n} \, , T(\mathbf{n}, \cdot)  \rangle \,\, \leq \,\, | \nabla_{\nabla\RRR} \, \mathbf{n} | \, |T(\mathbf{n}, \cdot) | \,\, \leq \,\, |\nabla \mathbf{n}| \, |\nabla \RRR| \, |T(\mathbf{n}, \cdot) |\,.
\end{eqnarray*}
On the other hand, we have that 
\begin{eqnarray*}
|\nabla \mathbf{n}| & = & \left| \, \frac{\nabla\nabla f}{|\nabla f|}   \, - \, \frac{\nabla \nabla f (\mathbf{n} \, , \cdot \,) \otimes \mathbf{n}}{|\nabla f|}    \,\right| \,\,\, \leq \,\,\,  2 \, \frac{|\nabla \nabla f|}{|\nabla f|} \,\,\, = \,\,\, O \, (f^{-1/2}) \, .
\end{eqnarray*}
Combining this with Lemma~\ref{T_decay}, we obtain that $\langle \nabla_{\nabla \RRR}( \mathbf{n}\otimes\mathbf{n})  \, ,T\rangle \, = \, o\, (f^{-1})$.
We pass now to estimate the term $\langle \RRR \,  \Delta_f(\mathbf{n}\otimes\mathbf{n}) \, ,T\rangle \, = \, \langle \RRR \,  \Delta(\mathbf{n}\otimes\mathbf{n}) \, ,T\rangle \, - \, \langle \RRR \,  \nabla_X (\mathbf{n}\otimes\mathbf{n}) \, ,T\rangle $. We start with the second term of the right hand side.
\begin{eqnarray*}
\langle\nabla_X(\mathbf{n}\otimes\mathbf{n})\, ,T\rangle & = & 2\, \langle\nabla_X\mathbf{n}\, ,T(\mathbf{n} , \cdot)\rangle   \\
& \leq & 2 \, \left| \,\frac{ \langle \nabla\nabla f(X, \cdot\,) \, ,     T(\mathbf{n}, \cdot)      \rangle}{|\nabla f|}   \, - \, \frac{\nabla \nabla f (\mathbf{n} \, , X) \,T(\mathbf{n}, \mathbf{n})}{|\nabla f|}    \,\right|      \\
& \leq & 2 \, \left| \,\frac{ \langle \Ric(X, \cdot\,) \, ,     T(\mathbf{n}, \cdot)      \rangle}{|\nabla f|}  \, \right| \, + \, |T(\mathbf{n}, \mathbf{n})| \, + \, \left|  \frac{\nabla \nabla f (\mathbf{n} \, , X) \,T(\mathbf{n}, \mathbf{n})}{|\nabla f|}    \,\right|      \\
&  \leq & \, \frac{|\nabla \RRR |}{|\nabla f|} \, |T(\mathbf{n}, \cdot) |  \, + \, 2 \, |T(\mathbf{n}, \mathbf{n})| \,\, = \,\,\textit{o}\, (f^{-1})\, ,
\end{eqnarray*}
where we used the identity~\eqref{equ:2} and the estimates in Lemma~\ref{T_decay}. To estimate the term $ \langle \RRR \,  \Delta(\mathbf{n}\otimes\mathbf{n}) \, ,T\rangle $, we recall that
\begin{eqnarray*}
\Delta(\mathbf{n}\otimes\mathbf{n}) \,\, = \,\, (\Delta\mathbf{n})\otimes\mathbf{n} \, + \, \mathbf{n}\otimes (\Delta\mathbf{n}) \, + \, 2 \, \nabla\mathbf{n}\ast\nabla\mathbf{n}
\end{eqnarray*}
and we immediately notice that $\langle \nabla\mathbf{n}\ast\nabla\mathbf{n}\, ,T \rangle \, = \, \nabla_k \mathbf{n}_i\, \nabla_k\mathbf{n}_j \, T_{ij} \, = \, \textit{O}\, (f^{-1})$. Moreover, a direct computation shows that
\begin{eqnarray*}
\Delta\mathbf{n} & = & \arrowvert X \arrowvert \, \Delta\left(\frac{1}{\arrowvert X\arrowvert}\right) \, \mathbf{n} \, + \, 2 \, \nabla_k\left(\frac{1}{\arrowvert X\arrowvert}\right)\cdot\nabla_kX \, + \, \frac{1}{\arrowvert X\arrowvert}\Delta X \, .
\end{eqnarray*}
Using the identities in Lemma~\ref{T_decay}, it is easy to obtain $\Delta X \, = \, \Delta \nabla f\, = \, \nabla \Delta f \, + \, \Ric(\nabla f , \cdot 	\, ) 	\, = \, -\nabla \RRR \, + \, \Ric(X\, , \cdot \, ) \, = \, - \frac{1}{2}\nabla \RRR$, and thus  
\begin{eqnarray*}
\frac{|\Delta X|}{\arrowvert X\arrowvert} \,\, = \,\, \textit{o} \, (f^{-1/2}) \, .
\end{eqnarray*}
Similarly, $|\nabla(1/\arrowvert X\arrowvert)|  = \textit{O}\, (f^{-1})$ and $\arrowvert X\arrowvert \, \Delta(1/\arrowvert X\arrowvert) \, = \, \textit{O}\, (f^{-1})$. Recasting all these estimates, it is straightforward to check that the claim is proven. Combining the claim with the estimate~\eqref{expansion}, we obtain
\begin{eqnarray*}
\Delta_f\arrowvert T\arrowvert^2 & = & 2\,\arrowvert\nabla T\arrowvert^2  \, + \,  \frac{2}{(n-2)} \, \arrowvert T\arrowvert^2 \, +\, \textit{o} \, (\arrowvert T\arrowvert^2)  \, + \, \textit{O} \,(f^{-1}) \, .
\end{eqnarray*}
The statement of the proposition follows at once.
\end{proof}
In order to analyze the partial differential inequality obtained in Proposition~\ref{PDE}, we prove the following algebraic lemma about the ``evolution'' of the curvature tensor of a gradient Ricci soliton. A parabolic proof of this result can be found for example in~\cite{Ben}, where the time derivative plays the role of the covariant derive along $\nabla f$.

\begin{lemma}
\label{lemm-sol-id}
Let $(M^n, g, f)$ be a gradient Ricci soliton. Then, for every vector fields $U$, $V$, $W$ and $Y$,
\begin{eqnarray*}
\nabla_{\nabla f}\Rm(U,V,W,Y) & = & -\nabla_U (\div\Rm)  (W,Y,V) \, + \, \nabla_{V} (\div\Rm) (W,Y,U)\\
& & + \Rm(V,\nabla^2 f(U , \cdot \,),W,Y) \, - \, \Rm(U,\nabla^2 f(V, \cdot \,),W,Y).
\end{eqnarray*}
\end{lemma}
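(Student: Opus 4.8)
The plan is to derive the claimed identity by differentiating the second Bianchi identity along $\nabla f$ and exploiting the soliton equation \eqref{equ:0} to trade time-like derivatives for curvature and Hessian terms. Concretely, I would start from the contracted form of the soliton structure: recall that on a gradient Ricci soliton the commutator formula $\nabla_i\nabla_j f - \nabla_j\nabla_i f = 0$ together with $\Ric + \nabla^2 f = \tfrac12 g$ gives $\nabla_X \Rm$ in terms of Lie-derivative-type expressions. The cleanest route is to use the formula relating the Lie derivative of $\Rm$ along $X = \nabla f$ to $\nabla_X \Rm$ plus correction terms involving $\nabla^2 f$ acting on the four slots of $\Rm$. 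Since $\Li_X g = g - 2\Ric$ is parallel-in-a-suitable-sense (it equals $g - 2\Ric$, and more to the point $\nabla^2 f$ is what appears), the Lie derivative of the full curvature tensor under the soliton vector field can be computed from the variation-of-curvature formula: if $\delta g = h$, then $\delta \Rm$ is a universal expression quadratic in $\nabla h$; taking $h = \Li_X g = 2(\tfrac12 g - \Ric) = g - 2\Ric$ produces terms with $\nabla \Ric = \div\Rm$-type contractions (using \eqref{equ:4}) and terms with $\nabla^2 f$.

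The key steps, in order: (i) Write $\Li_X \Rm(U,V,W,Y) = \nabla_X \Rm(U,V,W,Y) + \Rm(\Li_X$-corrections$)$, i.e. move the Lie derivative onto $\nabla_X$ at the cost of four terms of the form $\Rm(\nabla_U X, V, W, Y)$ etc., each of which equals $\Rm(\nabla^2 f(U,\cdot), V, W, Y)$ by definition of the Hessian. (ii) Independently compute $\Li_X \Rm$ from the general first-variation formula for the Riemann tensor applied to $h := \Li_X g$; since $h = g - 2\Ric$, the $g$-part contributes nothing (variation of curvature in the direction of $g$ itself is zero up to the expected scaling, which cancels), so $\Li_X \Rm = -2\, \delta\Rm[\Ric]$, an explicit expression in $\nabla\Ric$ and second derivatives of $\Ric$. (iii) Convert $\nabla\Ric$ into $\div\Rm$ using the contracted second Bianchi identity, and then use the soliton identity \eqref{equ:4}, $\div\Rm(Y,Z,W) = \Rm(Y,Z,W,X)$, to rewrite everything in the desired form. (iv) Match the two expressions for $\Li_X\Rm$ and solve for $\nabla_X \Rm$, collecting the $\nabla_U(\div\Rm)$ and $\nabla_V(\div\Rm)$ terms with the correct signs and the two Hessian-curvature terms; the symmetrization in $U \leftrightarrow V$ in the Bianchi identity is exactly what produces the antisymmetric pair in the statement.

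I expect the main obstacle to be bookkeeping of signs and index placements: the first-variation formula for $\Rm$ has several terms, and one must be careful whether $\div\Rm$ is defined as $\nabla^a \Rm_{a b c d}$ or with a sign, and in which slot; matching conventions with \eqref{equ:4} (where $\div\Rm(Y,Z,W) = \Rm(Y,Z,W,X)$) is essential. An alternative, possibly cleaner, derivation avoids the variation formula entirely: differentiate the second Bianchi identity $\nabla_U\Rm(V,\cdot,W,Y) - \nabla_V\Rm(U,\cdot,W,Y) + \nabla_{(\cdot)}\Rm(U,V,W,Y) = 0$ — rather, commute $\nabla_X = \nabla_{\nabla f}$ past a Bianchi-type contraction and use $\nabla^2 f = \tfrac12 g - \Ric$ to replace the resulting $\nabla\Ric$ terms. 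Since the paper flags this as having a parabolic proof in \cite{Ben} with $\partial_t \leftrightarrow \nabla_{\nabla f}$, the static proof is essentially the linearization-of-Ricci-flow computation for $\partial_t \Rm$ under $\partial_t g = -2\Ric$, reinterpreted: one just needs to check that the extra diffeomorphism term (pulling back along the flow of $\nabla f$) accounts precisely for the two Hessian terms. I would present it via the variation formula as that keeps everything manifestly tensorial, and relegate the sign verification to a short explicit computation in a local frame.
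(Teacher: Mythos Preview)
Your variation-formula approach would eventually work, but it is far more laborious than what the paper does. The paper's proof is essentially two lines: apply the second Bianchi identity with $\nabla f$ occupying one of the cycled slots,
\[
\nabla_{\nabla f}\Rm(U,V,W,Y) \;=\; -\,\nabla_U\Rm(V,\nabla f,W,Y)\; -\; \nabla_V\Rm(\nabla f,U,W,Y),
\]
and then observe that, by identity~\eqref{equ:4} together with the pair symmetry of $\Rm$, one has $\Rm(V,\nabla f,W,Y)=\div\Rm(W,Y,V)$. Expanding $\nabla_U\Rm(V,\nabla f,W,Y)$ via the Leibniz rule and isolating the term where $\nabla_U$ hits the slot containing $\nabla f$ (producing $\nabla^2 f(U,\cdot)$) gives exactly $\nabla_U(\div\Rm)(W,Y,V)-\Rm(V,\nabla^2 f(U,\cdot),W,Y)$; the $V$-term is analogous. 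No variation formula, no Lie derivative, no commutation of operators, and no use of the specific soliton constant.

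Your route, by contrast, must compute the full first variation of $\Rm$ in the direction $h=\Li_X g$, match it against $\nabla_X\Rm$ plus \emph{four} Hessian correction terms (one for each slot of $\Rm$), and then argue that two of those Hessian terms together with the various $\nabla^2\Ric$ pieces recombine into the two $\nabla(\div\Rm)$ terms in the statement. This is doable but heavy on bookkeeping. Note also a small slip: the $g$-part of $h=g-2\Ric$ does \emph{not} contribute nothing to $\delta\Rm_{(0,4)}$; it contributes exactly $\Rm$ (since $\Rm_{cg}=c\,\Rm_g$ as $(0,4)$-tensors). That term does get absorbed in the end, but only after the full calculation, so you cannot dismiss it a priori.

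Your ``alternative'' is closer in spirit to the paper but still overcomplicated: there is no need to \emph{differentiate} the Bianchi identity or to invoke $\nabla^2 f=\tfrac12 g-\Ric$. Only~\eqref{equ:4} is used, which is why the lemma is stated for gradient Ricci solitons of arbitrary sign.
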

\begin{proof}
By the second Bianchi identity, we have
\begin{eqnarray*}
\nabla_{\nabla f}\Rm(U,V,W,Y) \,\, = \,\, -\nabla_U\Rm(V,\nabla f,W,Y) \, - \, \nabla_{V}\Rm(\nabla f,U,W,Y) \, .
\end{eqnarray*}
On the other hand, one has
\begin{eqnarray*}
\nabla_U\Rm(V,\nabla f,W,Y) &  = & U \, \big( \Rm(V,\nabla f,W,Y)  \, \big) \, - \, \Rm(\nabla_UV,\nabla f,W,Y)\\
&&-\, \Rm(V,\nabla^2 f(U, \cdot \,),W,Y)\, - \, \Rm(V,\nabla f,\nabla_UW,Y)\\
&&- \, \Rm(V,\nabla f,W,\nabla_UY)\\
&=&\nabla_U(\div\Rm) (W,Y,V) \, - \, \Rm(V,\nabla^2 f(U , \cdot \, ),W,Y) \, ,
\end{eqnarray*}
where, in the last equality we used the identity~\eqref{equ:4} in Lemma~\ref{id-SGS}. This concludes the proof of the lemma. 
\end{proof}
As a corollary of this general lemma, we obtain estimates for the covariant derivatives of the Ricci tensor along $\nabla f$.
\begin{cor}
\label{cor1}
Let $(M^n, g, f)$ be a complete noncompact asymptotically cylindrical gradient shrinking Ricci soliton. Then, we have that $|\nabla_{\nabla f} \Ric|$ and $|\nabla_{\nabla f} \nabla_{\nabla f} \Ric|$ are bounded.
\end{cor}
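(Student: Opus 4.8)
The plan is to derive both estimates from Lemma~\ref{lemm-sol-id} together with the bounded geometry supplied by the asymptotically cylindrical hypothesis. I would first record the following consequence of Definition~\ref{a.c.}: since the convergence at infinity is in the \emph{smooth} Cheeger--Gromov sense and the limit cylinder has parallel curvature, for every $k\ge 0$ the function $|\nabla^{k}\Rm|$ converges to a constant as $f\to+\infty$, hence is bounded on all of $M$. In particular $|\Rm|,|\nabla\Rm|,|\nabla^{2}\Rm|,|\nabla^{3}\Rm|,|\nabla^{4}\Rm|$ are bounded, and therefore so are $\nabla^{2}f=\tfrac12 g-\Ric$ and $\nabla^{3}f=-\nabla\Ric$.

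For $|\nabla_{\nabla f}\Ric|$ I would invoke Lemma~\ref{lemm-sol-id}: its right-hand side is a sum of terms of the form $\nabla(\div\Rm)$ and $\Rm\ast\nabla^{2}f$. Since $\div\Rm$ is a contraction of $\nabla\Rm$, the term $\nabla(\div\Rm)$ is a contraction of $\nabla^{2}\Rm$; together with the bound on $\nabla^{2}f$ this shows that $|\nabla_{\nabla f}\Rm|$ and, after contracting a pair of indices, $|\nabla_{\nabla f}\Ric|$ are bounded. Equivalently, by~\eqref{equ:6} one has $\nabla_{\nabla f}\Ric=\Delta\Ric-\Delta_{f}\Ric=\Delta\Ric-\Ric+2\,\Rm\ast\Ric$, and every term on the right is bounded.

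For $|\nabla_{\nabla f}\nabla_{\nabla f}\Ric|$ the naive estimate $|\nabla_{\nabla f}\nabla_{\nabla f}\Ric|\le|\nabla^{2}\Ric|\,|\nabla f|^{2}=O(f)$ is useless, since $|\nabla f|=O(f^{1/2})$; the soliton identities must be used a second time. Differentiating $\nabla_{\nabla f}\Ric=\Delta\Ric-\Ric+2\,\Rm\ast\Ric$ along $\nabla f$, every term is bounded by the previous step except $\nabla_{\nabla f}\Delta\Ric$, which I would split as $\Delta(\nabla_{\nabla f}\Ric)+[\nabla_{\nabla f},\Delta]\Ric$. The first summand equals $\Delta(\Delta\Ric-\Ric+2\,\Rm\ast\Ric)$, a universal algebraic expression in the $\nabla^{j}\Rm$ with $j\le 4$, hence bounded. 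In the commutator $[\nabla_{\nabla f},\Delta]\Ric$ the only a priori dangerous terms are those in which the vector field $X=\nabla f$ is contracted against the curvature, the remaining factors involving only $\nabla^{2}f$, $\nabla^{3}f$ and curvature; and here the soliton identities are decisive: a contraction of $X$ into any slot of $\Rm$ equals $\pm\div\Rm$ by~\eqref{equ:4} (a contraction of $\nabla\Rm$, bounded), a contraction of $X$ into $\Ric$ equals $\tfrac12\nabla\RRR$ by~\eqref{equ:2} (bounded), a contraction of $X$ into $\nabla\Rm$ at a curvature slot equals $\nabla(\div\Rm)-\Rm\ast\nabla^{2}f$ (bounded), and a contraction of $X$ with the differentiation direction produces $\nabla_{\nabla f}$ of a curvature quantity (bounded by the first part). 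Hence $[\nabla_{\nabla f},\Delta]\Ric$, and therefore $\nabla_{\nabla f}\nabla_{\nabla f}\Ric$, is bounded.

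The main obstacle is exactly this last point: every object one meets lives, a priori, at a level such as $|\nabla\Ric|\,|\nabla f|$ or $|\nabla^{2}\Ric|\,|\nabla f|$, yielding only the useless growth $O(f^{1/2})$, respectively $O(f)$. The content of Lemma~\ref{lemm-sol-id} and of the identities~\eqref{equ:2} and~\eqref{equ:4} is that on a gradient Ricci soliton every contraction of the \emph{large} field $\nabla f$ against the curvature is in fact a bounded, lower-order quantity; carefully keeping track of which slot $\nabla f$ falls into in each commutator term is then the only delicate bookkeeping.
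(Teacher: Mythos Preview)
Your proposal is correct and follows essentially the same approach as the paper: both express $\nabla_{\nabla f}\Ric$ as a bounded curvature expression (the paper via the trace of Lemma~\ref{lemm-sol-id}, you equivalently via~\eqref{equ:6}), differentiate once more along $\nabla f$, and reduce the only delicate term $\nabla_{\nabla f}\Delta\Ric$ to commutator terms in which every contraction of $\nabla f$ against a curvature slot is handled by the soliton identities~\eqref{equ:2} and~\eqref{equ:4}. The paper carries this out in explicit local coordinates while you phrase it via $[\nabla_{\nabla f},\Delta]$ and the observation that $\Delta(\nabla_{\nabla f}\Ric)=\Delta(\Delta\Ric-\Ric+2\,\Rm\ast\Ric)$ involves only $\nabla^{j}\Rm$ with $j\le 4$, but the content is the same.
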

\begin{proof}
First of all we notice that, in local coordinates, the statement of the previous lemma reads
\begin{eqnarray*}
\nabla_p f \, \nabla_p \RRR_{ijkl} & = & - \, \nabla_i (\div \Rm)_{klj} \, + \, \nabla_j (\div \Rm)_{kli} \, + \, \nabla_i\nabla_p  f \, \RRR_{jpkl} \, - \, \nabla_j\nabla_p f \, \RRR_{ipkl} \\
& = &  - \, \nabla_i  (\nabla_k \RRR_{lj}  -  \nabla_{l} \RRR_{kj} ) \, + \, \nabla_j  (\nabla_k \RRR_{li}  -  \nabla_{l} \RRR_{ki} ) \\
& & -  \, \RRR_{ip} \, \RRR_{jpkl} \, + \, \RRR_{jp} \, \RRR_{ipkl} \, -  \, \RRR_{ijkl} \, ,
\end{eqnarray*}
where, in the last equality, we used the contracted second Bianchi identity and the soliton equation~\eqref{equ:0}. By the fact that the soliton is asymptotically cylindrical, we obtain that $|\nabla_{\nabla f} \Rm|$ is bounded. In particular, we have that also $|\nabla_{\nabla f} \Ric|$ is bounded. Taking the trace of the identity above, we get
\begin{eqnarray}
\label{equaz2}
\nabla_p f \, \nabla_p \RRR_{jk} & = &  - \, \nabla_i  \nabla_k \RRR_{ij} \, + \,   \Delta \RRR_{kj}  \, + \, \frac{1}{2} \, \nabla_j \nabla_k \RRR  \, -  \, \RRR_{ip} \, \RRR_{jpki} \, + \, \RRR_{jp} \, \RRR_{pk} \, -  \, \RRR_{jk} \,.
\end{eqnarray}
We are now in the position to estimate $|\nabla_{\nabla f} \nabla_{\nabla f} \Ric|$. In fact, taking the derivative of the previous expression, we obtain
\begin{eqnarray*}
\nabla_q f \, \nabla_p f \, \big( \nabla_q \nabla_p \RRR_{jk} \big) 
& = & \nabla_q f \, \nabla_q \big(  \nabla_p f \, \nabla_p \RRR_{jk}  \big) \, - \, \nabla_q f \, (\nabla_q \nabla_p f) \, (\nabla_p \RRR_{jk}) \\
& = & \nabla_q f \,\, \nabla_q \big(  - \, \nabla_i  \nabla_k \RRR_{ij} \, + \,   \Delta \RRR_{kj}  \, + \, \frac{1}{2} \, \nabla_j \nabla_k \RRR \big)\\
& &  + \, \nabla_q f \,\, \nabla_q \big(   -  \, \RRR_{ip} \, \RRR_{jpki} \, + \, \RRR_{jp} \, \RRR_{pk} \, -  \, \RRR_{jk}  \, \big) \\
& &   + \, \frac{1}{2} \, \nabla_p \RRR  \, (\nabla_p \RRR_{jk} )  \, - \, \frac{1}{2} \,  \nabla_p f  \, (\nabla_p \RRR_{jk} )    \,.
\end{eqnarray*}
From the estimates obtained in the first part of the proof, we infer that the second  and the third raw of the right hand side are bounded. To estimate the first raw, we first notice that, by the contracted second Bianchi identity, we have that
$$
\frac{1}{2} \, \nabla_j \nabla_k \RRR \,  - \, \nabla_i  \nabla_k \RRR_{ij}  \,\, = \,\, \RRR_{ikjl} \RRR_{il}
\, - \,  \RRR_{ik}\RRR_{ij} \,.
$$
Hence, reasoning as before, it is easy to deduce that the term $ \nabla_q f \, \nabla_q\big( \,\frac{1}{2} \, \nabla_j \nabla_k \RRR \,  - \, \nabla_i  \nabla_k \RRR_{ij} \,  \big)$ is bounded. To complete the proof, we thus need to estimate the term $\nabla_q f \, (\nabla_q \Delta \RRR_{kj})$. By the usual formulae for the exchange of the derivatives, we get
\begin{eqnarray}
\label{equaz} 
\nabla_q \Delta \RRR_{kj} & = & \Delta \nabla_q \RRR_{kj} \, - \, (\div \Rm)_{qkl} \RRR_{lj} \, - \, (\div \Rm)_{qjl} \RRR_{lk} \\
& & + \, \RRR_{ql} (\nabla_{l}\RRR_{kj}) \, + \, \RRR_{qpkl} (\nabla_{p}\RRR_{lj}) \, + \, \RRR_{qpjl} (\nabla_{p}\RRR_{kl}) \, . 
\end{eqnarray}
Using the identites~\eqref{equ:2} and~\eqref{equ:4}, it is immediate to check that the last raw contracted with $\nabla_q f$ gives rise to bounded terms. On the other hand we have that
\begin{eqnarray*}
\nabla_q f \, (\div \Rm)_{qkl} & = & \nabla_q f \, \big(  \nabla_q \RRR_{kl} - \nabla_k \RRR_{ql}   \big) \\
& = &   \nabla_q f \,   (\nabla_q \RRR_{kl}) \, - \, \nabla_k (\nabla_q f \, \RRR_{ql}) \, + \, \nabla_k \nabla_q f \, \RRR_{ql} \\
& = & \nabla_q f \,   (\nabla_q \RRR_{kl}) \, - \, \frac{1}{2}\, \nabla_k \nabla_l \RRR - \RRR_{kq} \RRR_{ql} + \frac{1}{2}\RRR_{kl} \,,
\end{eqnarray*}
and thus, by the previous discussion, it is evident that the second and the third terms in the first raw of~\eqref{equaz} contracted with $\nabla_{q} f$ are bounded. Finally, we have
\begin{eqnarray*}
(\Delta \nabla_q \RRR_{kj}) \, \nabla_{q} f &=& \Delta \big( \nabla_q \RRR_{kj} \, \nabla_{q} f  \big) - \nabla_q \RRR_{kj} ( \Delta \nabla_{q} f ) - 2 \nabla_{p} \nabla_{q} R_{kj} \, \nabla_{p} \nabla_{q} f \\
&=&  \Delta \big( \nabla_q \RRR_{kj} \, \nabla_{q} f  \big) - \nabla_q \RRR_{kj} ( \nabla_{q} \Delta f ) - \frac{1}{2}\nabla_q \RRR_{kj} \,\nabla_{q} \RRR + 2 \nabla_{p} \nabla_{q} R_{kj} \, \RRR_{pq} - \Delta \RRR_{kj} \\
&=&  \Delta \big( \nabla_q \RRR_{kj} \, \nabla_{q} f  \big) + \frac{1}{2} \nabla_q \RRR_{kj} \nabla_{q} \RRR + 2 \nabla_{p} \nabla_{q} R_{kj} \, \RRR_{pq} - \Delta \RRR_{kj} \,.
\end{eqnarray*}
Again, from equation~\eqref{equaz2} and the previous observations, we have that all the terms of the right hand side are bounded and this completes the proof of the corollary.
\end{proof}

We are now in the position to prove the following quantitative decay estimates of $|T|$ and $|\nabla^{k} T|$ as $f \to \infty$.
\begin{prop}
\label{Tdecay}
Let $(M^n, g, f)$ be a complete noncompact asymptotically cylindrical gradient shrinking Ricci soliton and let $T$ be the tensor defined in~\eqref{T} outside a compact set. Then, there exists a positive constant $a(n)$ such that
$$
|T|^{2} \,=\, O\,(f^{-a(n)} ) \quad\quad \hbox{and} \quad\quad |\nabla^{k} T|^{2} \,=\, O\, ( f^{-a(n)+\eps}) \,,
$$
for every $k>0$ and every $\eps>0$. Moreover, we can choose $a(n):=\min\{1, 2/(n-2) - \eta \}$, for any positive $\eta$ sufficiently small.
\end{prop}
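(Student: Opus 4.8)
The plan is to feed the inequality $\Delta_f|T|^2\geq c(n)|T|^2+O(f^{-1})$ of Proposition~\ref{PDE} into a maximum principle ``at infinity'', using a negative power of the potential function as a barrier, and then to promote the resulting decay of $|T|$ to decay of all its covariant derivatives by interpolation. Fix $\eta>0$ small and set $a:=a(n)=\min\{1,2/(n-2)-\eta\}$; by Proposition~\ref{PDE} we may take the constant there to be $c(n)=2/(n-2)-\eta/2$, so that $a<c(n)$, and work on a region $\{f\geq f_1\}$ on which $T$ is defined (possible for $f_1$ large by the Cao--Zhou estimates) and on which $\Delta_f|T|^2\geq c(n)|T|^2-C_0 f^{-1}$ for a fixed $C_0$. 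Using $\Delta f=n/2-\RRR$ and $|\nabla f|^2=f-\RRR$ from Lemma~\ref{id-SGS}, together with the boundedness of $\RRR$ near infinity, a direct computation gives
\[
\Delta_f\big(f^{-a}\big)\,=\,a\,f^{-a}\,+\,O\big(f^{-a-1}\big),
\]
with \emph{absolute} implied constant $C_1$. Hence, for a constant $C>0$ still to be fixed, the function $w:=|T|^2-C f^{-a}$ satisfies, at any point of $\{f\geq f_1\}$ where $w>0$ (so $|T|^2>Cf^{-a}$ there) and using $f^{-1}\leq f^{-a}$ for $f\geq f_1\geq 1$,
\[
\Delta_f w\,\geq\,\big[(c(n)-a)\,C-C_0-C_1 C f^{-1}\big]\,f^{-a}.
\]
Choosing first $\delta\in(0,c(n)-a)$, then $f_1$ so large that $C_1 f^{-1}<\delta$ on $\{f\geq f_1\}$, and finally $C>C_0/(c(n)-a-\delta)$, the bracket is strictly positive, so $\Delta_f w>0$ wherever $w>0$.

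Next I would run the maximum principle on $\{f\geq f_1\}$. The level set $\{f=f_1\}$ is compact since $f$ is proper, so after a further increase of $C$ we may assume $w\leq 0$ there; on the other hand $|T|\to 0$ as $f\to\infty$ by Lemma~\ref{Ttozero} and $Cf^{-a}\to 0$, so $w\to 0$ at infinity. If $\sup_{\{f\geq f_1\}}w>0$ it would then be attained at an interior point $p$, where $\nabla w(p)=0$ and $\Delta w(p)\leq 0$, hence $\Delta_f w(p)=\Delta w(p)\leq 0$, contradicting $\Delta_f w(p)>0$. Therefore $|T|^2\leq C f^{-a}$ on $\{f\geq f_1\}$, and since $\{f<f_1\}$ is compact this gives $|T|^2=O(f^{-a(n)})$.

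For the derivative estimates I would exploit that the smooth Cheeger--Gromov convergence in Definition~\ref{a.c.} forces bounded geometry of every order on $\{f\geq f_1\}$: along any sequence escaping to infinity all $|\nabla^j\Rm|$ and the inverse injectivity radius converge to their (finite) cylindrical values, hence are bounded on $\{f\geq f_1\}$. Since $T$ is a universal smooth expression in $g$, $\Ric$, $\RRR$, $\nabla f$ and $|\nabla f|^{-1}$, whose covariant derivatives of all orders are bounded on $\{f\geq f_1\}$ (using $\nabla^2 f=\tfrac12 g-\Ric$, the contracted Bianchi identities, Corollary~\ref{cor1}, and $|\nabla f|^2=O(f)$), each $|\nabla^j T|$ is bounded there by some $B_j$. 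Fixing $k>0$ and $\eps>0$, choose an integer $m>ak/\eps$ and apply the interpolation inequality for tensors on unit balls of a manifold with bounded geometry,
\[
|\nabla^k T|(x)\,\leq\,C(k,m)\,\Big(\sup_{B(x,1)}|T|\Big)^{1-k/m}\,(B_m+B_0)^{k/m}.
\]
Because $|\nabla\sqrt f|=|\nabla f|/(2\sqrt f)$ is bounded near infinity, $f$ is comparable to $f(x)$ on $B(x,1)$ for $f(x)$ large, so $\sup_{B(x,1)}|T|\leq C' f(x)^{-a/2}$ by the first part; substituting gives $|\nabla^k T|(x)^2\leq C'' f(x)^{-a(1-k/m)}=O(f(x)^{-a+ak/m})=O(f^{-a+\eps})$, as desired.

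I expect the main difficulty to be organizational rather than conceptual: keeping the hierarchy of choices ($\eta$, hence $c(n)$ and $a$; then $\delta$; then $f_1$; then $C$) non-circular, and in particular making sure the implied constants in the $O(f^{-1})$ term of Proposition~\ref{PDE} and in $\Delta_f(f^{-a})$ are genuinely independent of $C$, so that the barrier inequality closes. The only substantive analytic input beyond the already-established differential inequality is the passage from asymptotic cylindricity to uniform bounded geometry of all orders near infinity, which is precisely what legitimizes the interpolation step.
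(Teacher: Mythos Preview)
Your argument is correct, and for the key decay estimate it follows a genuinely different route from the paper. The paper does not run an elliptic maximum principle directly on $\Delta_f|T|^2\geq c(n)|T|^2+O(f^{-1})$; instead it splits the Laplacian along the level sets of $f$, uses Corollary~\ref{cor1} to bound $\langle\nabla|T|^2,\nabla f\rangle$ and hence the normal derivative $\partial_f|T|^2$, and arrives at a \emph{parabolic} inequality $\partial_s v\leq\Delta^h v-c(n)v+C e^{-s}$ in the variable $s=\log f$, which it then compares with the explicit ODE solution to get the decay. Your barrier $w=|T|^2-Cf^{-a}$, together with the computation $\Delta_f(f^{-a})=af^{-a}+O(f^{-a-1})$ and the observation $a<c(n)$, bypasses Corollary~\ref{cor1} entirely and replaces the parabolic comparison by a one-line interior maximum argument; this is cleaner and more self-contained. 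The trade-off is only that the paper's level-set formulation makes the role of the ``time'' variable $\log f$ explicit, which some readers may find conceptually suggestive, whereas your approach hides it inside the choice of barrier exponent. For the derivative estimates both proofs use the same idea---uniform bounded geometry near infinity plus interpolation---with the paper going through $L^p$ interpolation and Sobolev on tubular domains $\Omega_{r,s}$ and you using an $L^\infty$ interpolation on unit balls; these are equivalent under the bounded-geometry hypothesis you correctly extract from Definition~\ref{a.c.}.
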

\begin{proof}
We have already observed that, for $f\geq f_{0}$ large enough, $|\nabla f|>0$ and thus $\{f \geq f_{0} \}$ is diffeomorphic to $[f_{0},+\infty)\times \{f=f_{0}\}$. Hence, the metric $g$ can be written as 
$$
g \,=\, \frac{df \otimes df}{|\nabla f|^{2}} + h_{\alpha\beta}(f,\theta) \,d \theta^{\alpha} \otimes d \theta^{\beta}\,,
$$
where $\theta=(\theta^{1},\ldots,\theta^{n-1})$ are local coordinates on the regular level set $\{f=f_{0}\}$ and $h (f,\cdot)_{|p}$ is the metric induced on the regular level set $\{f=f(p)\}$. Let $u$ be a smooth function on $M$. Then, it is well known that the Laplacian of $u$ can be written as
\begin{equation}
\label{lapsigma}
\Delta u \,=\, \nabla^{2} u \,( \mathbf{n}, \mathbf{n} ) + \HHH \, \langle \nabla u, \mathbf{n} \rangle + \Delta^{h} u \,,
\end{equation}
where $\HHH(p)$ is the mean curvature of the level set $\{f=f(p)\}$. We notice that $ \nabla^{2} |T|^{2} \,( \mathbf{n}, \mathbf{n} ) = O\, (f^{-1})$ and that
$$
 H \, =\, \frac{\Delta f - \nabla^{2} f (\mathbf{n}, \mathbf{n} )}{|\nabla f|} \,=\, \frac{n-1-\RRR+\Ric(\mathbf{n},\mathbf{n})}{2|\nabla f|} \,=\, O\,(f^{-1/2})\,.  
$$
To proceed, we notice that from Corollary~\ref{cor1} one has $\langle \nabla |T|^{2}, \nabla f \rangle \leq C_{1}$, for some $C_{1}>0$. In particular, from identity~\eqref{equ:2}, $\partial_{f} |T|^{2} \leq C_{2} f^{-1}$ for some $C_{2}>0$.
Morever, from Proposition~\ref{PDE}, we have
\begin{eqnarray*}
f\, \partial_{f} |T|^{2} &=& \langle \nabla |T|^{2}, \nabla f \rangle+ R \partial_{f} |T|^{2} \\
&\leq& \Delta |T|^{2} - c(n) |T|^{2} + C_{3} f^{-1} \\
&\leq& \Delta^{h} |T|^{2} - c(n) |T|^{2} + C_{4} f^{-1} \,,
\end{eqnarray*}
for some $C_{3},C_{4}>0$. Setting $s:=\log f$, $s_{0}= \log f_{0}$ and $v(s,\theta):=|T|^{2}(e^{s},\theta)$, we have
\begin{equation}
\label{PDEv}
\partial_{s} v \, \leq \, \Delta^{h} v - c(n)\, v + C_{4} \,e^{-s} \,.
\end{equation}
Since $\{f=f_{0}\}$ is compact, it is well defined $v_{0}:=\max_{\theta} v (s_{0},\theta) $. By the parabolic maximum principle one has that $0\leq v(s,\theta) \leq V(s)$, where $V(s)$ is the unique solution of the ODE associated to~\eqref{PDEv} with initial condition $V(0)=v_{0}$, namely 
$$
V(s):= \frac{C_{4}}{c(n)-1}e^{-s}+\Big( v_{0}\,e^{c(n)\,s_{0}}- \frac{C_{4}}{c(n)-1}e^{(c(n)-1)s_{0}}\Big)\,e^{-c(n)s
} \,.
$$
Recalling that $c(n)= 2/(n-2) - \eta$, we set $a(n):=\min\{1, 2/(n-2) - \eta \}>0$ and we obtain $0\leq v(s,\theta) = O\,(e^{-a(n)s})$ as $s\to \infty$. Rephrasing this in terms of $|T|$ and $f$, we have proved that $|T|^{2} = O\, (f^{-a(n)})$. Combining the standard interpolation inequalities as in~\cite[Corollary 12.6]{Ham-3d-Pos} with Sobolev estimates and using the fact that the metric is asymptotically cylindrical it is possible to prove the $C^{k}$-estimate. For instance, if $k=1$, we consider compact domains of the form $\Omega_{r,s}=\{p\in M^{n}|\,\dist(p,\{f=r\})\leq s \}$ and a cutoff functions $\chi\in C^{\infty}_{c}(\Omega_{r,2})$ with $\chi\equiv 1$ on $\Omega_{r,1}$. By interpolation inequalities, we have
$$
|| \nabla^{2} (\chi\, T) ||_{L^{p}(\Omega_{r,2})} \leq C(n,p) \, ||\chi\,T||^{1-2/p}_{L^{\infty}(\Omega_{r,2})} \, ||\nabla^{p} (\chi \,T)||^{2/p}_{L^{2}(\Omega_{r,2})} \leq C'(n,p) ||T||^{1-2/p}_{L^{\infty}(\Omega_{r,2})} \,,
$$
where in the last inequality we have used the fact that the soliton is asymptotically cylindrical and hence the derivatives of the curvature are bounded. For $p$ large enough, Sobolev inequality implies that there exist a constant $C_{S}(\Omega_{r,2})$ such that
$$
||\nabla (\chi\,T)||_{L^{\infty}(\Omega_{r,2})} \leq C_{S}(\Omega_{r,2}) \, || \nabla^{2} (\chi\, T) ||_{L^{p}(\Omega_{r,2})} \,.
$$
Again, it is not hard to see that $C_{S}:=\sup_{r>0}C_{S}(\Omega_{r,2}) <+\infty$, since the soliton is asymptotically cylindrical. In particular, this shows that
$$
||\nabla (\chi\,T)||_{L^{\infty}(\Omega_{r,2})} \leq C_{S}\,C'(n,p) ||T||^{1-2/p}_{L^{\infty}(\Omega_{r,2})} \leq C'' r^{-a(n)(1-2/p)}\,,
$$
for some positive constant $C''$. This concludes the proof of the proposition.

\end{proof}

In dimension greater than three, one can estimate the scalar curvature from $T$. Notice that in dimension three, gradient shrinking Ricci solitons have been completely classified (see ~\cite{caochenzhu}).
\begin{cor}\label{Rdecay}
Let $(M^n, g, f)$, $n\geq 4$, be a complete noncompact asymptotically cylindrical gradient shrinking Ricci soliton. Then, we have that
\begin{eqnarray*}
|\nabla^{k} \RRR|=\textit{O}(f^{-a(n)/2+\eps})\quad\quad \hbox{and} \quad \quad |\nabla^{k}\Ric|=\textit{O}(f^{-a(n)/2+\eps})
\end{eqnarray*}
for every $k>0$ and every $\eps>0$.
\end{cor}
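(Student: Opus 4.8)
The plan is to bootstrap from the decay of $|T|$ obtained in Proposition~\ref{Tdecay} together with the fact that the soliton is asymptotically cylindrical, which in particular forces $\RRR \to 1$ as $f\to\infty$. The starting point is the definition~\eqref{T} of $T$: taking traces after contracting twice with $\mathbf n$, and using that $\tr T = 0$ and that the fibre metric $g - \mathbf n\otimes\mathbf n$ has rank $n-1$, one recovers $\RRR$ algebraically from $\Ric$, $T$ and $T(\mathbf n,\mathbf n)$. Since by Lemma~\ref{T_decay} one has $T(\mathbf n,\mathbf n) = o(f^{-1})$, the honest content is that $\RRR$ is, up to lower-order terms, determined by the ``fibrewise'' part of the Ricci tensor. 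Thus the real object to control is the deviation of the full curvature from its cylindrical model.

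First I would use equation~\eqref{equ:7}, $\RRR = \Delta_f \RRR + 2|\Ric|^2$, rewritten as $\Delta_f \RRR = \RRR - 2|\Ric|^2$. Since the soliton is asymptotically cylindrical, $|\Ric|^2 \to 1/2$ and $\RRR \to 1$, so the right-hand side tends to $0$; more precisely, writing $\Ric = \frac{\RRR}{n-1}(g - \mathbf n\otimes\mathbf n) + T$ and expanding, one gets $2|\Ric|^2 - \RRR = \frac{(n-3)}{n-1}\RRR^2 + \frac{2\RRR}{n-1}T(\mathbf n,\mathbf n) + 2|T|^2 - \RRR$, hmm — this is not obviously small, so instead one should subtract the cylinder value. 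Set $\phi := \RRR - 1$. Because on the cylinder $\RRR \equiv 1$ is an exact solution of the soliton identities, the analogous computation for $\phi$ produces $\Delta_f \phi = c\,\phi + (\text{terms controlled by }|T|^2, T(\mathbf n,\mathbf n), \text{and }\phi^2)$, with $c$ a dimensional constant of the right sign (positive), exactly as in Proposition~\ref{PDE}. Feeding in $|T|^2 = O(f^{-a(n)})$, $T(\mathbf n,\mathbf n) = o(f^{-1})$ and the mean-curvature estimate $\HHH = O(f^{-1/2})$ from the proof of Proposition~\ref{Tdecay}, and running the same change of variables $s = \log f$, $v = \phi^2(e^s,\theta)$, the parabolic maximum principle on the compact level sets yields $\phi^2 = |\RRR - 1|^2 = O(f^{-a(n)})$, hence by the algebraic relation above also $|\Ric - \frac{1}{n-1}(g - \mathbf n\otimes\mathbf n)|$ decays; and since $\frac{1}{n-1}\RRR \to \frac12$, one in fact concludes $|\Ric - \tfrac12 g|$ and $|\RRR - 1|$ decay like $f^{-a(n)/2}$.

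The estimates on the higher derivatives $|\nabla^k \RRR|$ and $|\nabla^k \Ric|$ are then obtained exactly as in the last part of the proof of Proposition~\ref{Tdecay}: combine the standard interpolation inequalities of \cite[Corollary 12.6]{Ham-3d-Pos} with the Sobolev inequalities on the domains $\Omega_{r,2} = \{p \in M^n \mid \dist(p,\{f=r\})\leq 2\}$, using that the soliton is asymptotically cylindrical so that all curvature derivatives are a priori bounded and the Sobolev constants $C_S(\Omega_{r,2})$ are uniformly bounded in $r$. This costs the loss of $\eps$ in the exponent, giving $|\nabla^k \RRR| = O(f^{-a(n)/2+\eps})$ and likewise for $\Ric$.

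The main obstacle I expect is getting the sign and size of the constant $c$ in the differential inequality $\Delta_f \phi \geq c\,\phi + O(f^{-\cdot})$ to come out correctly: one must carefully expand $2|\Ric|^2 - \RRR$ around the cylinder value, track the contribution of the off-diagonal and radial components of $\Ric$ via the sharp estimates of Lemma~\ref{T_decay}, and verify that the $\phi^2$ term and the $T$-terms can all be absorbed into the stated error $O(f^{-1})$ plus $o(\phi^2)$ — this is the analogue for $\RRR$ of the computation carried out for $|T|^2$ in Proposition~\ref{PDE}, and the dimensional restriction $n\geq 4$ is presumably exactly what makes $c>0$. Once that differential inequality is in hand, the ODE comparison and the interpolation step are routine.
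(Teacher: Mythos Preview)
Your approach has a genuine gap at precisely the point you flagged as the main obstacle. Linearizing $\Delta_f\RRR = \RRR - 2|\Ric|^2$ around the cylinder value (which is $\RRR = (n-1)/2$, not $1$) gives, with $\phi := \RRR - (n-1)/2$ and using $|\Ric|^2 = |T|^2 - \tfrac{2\RRR}{n-1}T(\mathbf n,\mathbf n) + \tfrac{\RRR^2}{n-1}$,
\[
\Delta_f\phi \;=\; -\,\phi \;-\; \tfrac{2}{n-1}\phi^2 \;-\; 2|T|^2 \;+\; O(f^{-1}).
\]
The coefficient in front of $\phi$ is $-1$, independent of dimension. Hence for $\phi^2$ one obtains $\Delta_f\phi^2 \ge -2\phi^2 + \ldots$, and the parabolic comparison of Proposition~\ref{Tdecay} yields an inequality $\partial_s v \le \Delta^h v + 2v + \ldots$ which permits exponential growth, not decay. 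The hypothesis $n\ge 4$ plays no role here and cannot repair the sign. (Indeed $v := f - n/2$ itself solves $\Delta_f v = -v$, showing that decay cannot be extracted from this operator by a forward maximum principle alone.) One could instead try to integrate the first-order relation $\nabla_X\phi \approx \phi + \Delta\RRR + O(|T|^2)$ backward from infinity using $\phi\to 0$, but that requires a bound on $\Delta\RRR$, which is exactly what you are trying to prove --- the paper does carry out this ODE integration, but only \emph{after} the present corollary, in Proposition~\ref{sdecay}.

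The paper's argument avoids the issue entirely: rather than study $\RRR$ itself, take the divergence of $T$. The contracted Bianchi identity $\nabla_i\RRR_{ij} = \tfrac12\nabla_j\RRR$ gives
\[
\nabla_i T_{ij} \;=\; \frac{n-3}{2(n-1)}\,\nabla_j\RRR \;+\; \frac{1}{n-1}\nabla_i\bigl(\RRR\,\mathbf n_i\,\mathbf n_j\bigr) \;=\; \frac{n-3}{2(n-1)}\,\nabla_j\RRR \;+\; O(f^{-1/2}).
\]
Since $|\div T|\le |\nabla T| = O(f^{-a(n)/2+\eps})$ by Proposition~\ref{Tdecay}, and $n-3\neq 0$ precisely when $n\ge 4$, one reads off $|\nabla\RRR| = O(f^{-a(n)/2+\eps})$ directly. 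The estimate for $|\nabla\Ric|$ then follows from $\nabla\Ric = \nabla T + \nabla\bigl(\tfrac{\RRR}{n-1}(g-\mathbf n\otimes\mathbf n)\bigr)$ together with what has just been proved; the higher derivatives come from the interpolation/Sobolev scheme you described. This also makes the role of $n\ge 4$ transparent: it is the nonvanishing of the coefficient $\tfrac{n-3}{2(n-1)}$, not any spectral gap for $\Delta_f$.
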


\begin{proof} Reasoning as the previous proposition, it is not hard to see that
\begin{eqnarray*}
\nabla_{i} T_{ij} \,=\, \frac{n-3}{2(n-1)}\nabla_{j} \RRR + \frac{1}{n-1}\nabla_{i} \big(\RRR\, \mathbf{n}_{i}\, \mathbf{n}_{j} \big) \,=\,  \frac{n-3}{2(n-1)}\nabla_{j} \RRR + \textit{O}(f^{-1/2}) \,.
\end{eqnarray*}
By Proposition~\ref{Tdecay}, there exists a positive constant $a(n)$ such that $$|\div T| \leq |\nabla T| = O (f^{-a(n)/2+\eps}),$$ for all $\eps>0$. And this proves the first estimate. To obtain the estimate for $|\nabla \Ric|$ it is sufficient to compute $|\nabla T|$ and use both the estimate for $|\nabla \RRR|$ and Proposition~\ref{Tdecay}. This proves the case $k=1$. For $k>1$ it is sufficient to repeat the same argument, based on interpolation inequalities and uniform Sobolev estimates, as in the proof of Proposition~\ref{Tdecay}.
\end{proof}

We are now in the position to improve the decay estimate of the scalar curvature at infinity.
\begin{prop}\label{sdecay} Let $(M^n, g, f)$, $n\geq 4$, be a complete noncompact asymptotically cylindrical gradient shrinking Ricci soliton. Then, we have that
\begin{eqnarray*}
\RRR=\frac{n-1}{2}+\textit{O}(f^{-a(n)/2+\epsilon}).
\end{eqnarray*}
\end{prop}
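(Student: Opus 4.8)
The plan is to turn the identity~\eqref{equ:7} into a first–order ODE for $\RRR$ along the integral curves of $\nabla f$ and integrate it, using crucially that Definition~\ref{a.c.} forces $\RRR\to(n-1)/2$ \emph{uniformly} at infinity in order to discard the growing homogeneous solution. Throughout set $\rho:=\RRR-\tfrac{n-1}{2}$, so that $\rho=o(1)$ uniformly as $f\to\infty$, and recall that by the Cao--Zhou estimates $|X|^2=f-\RRR=f(1+o(1))$, which is bounded below by $cf$ outside a compact set.

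First I would rewrite $\partial_f\RRR$ in the coordinates $(f,\theta)$ of the proof of Proposition~\ref{Tdecay}: there $g=|\nabla f|^{-2}\,df\otimes df+h_{\alpha\beta}\,d\theta^\alpha\otimes d\theta^\beta$, hence $\nabla f=|\nabla f|^2\,\partial_f$ and $\partial_f\RRR=|X|^{-2}\nabla_X\RRR$. From $\Delta_f\RRR=\Delta\RRR-\nabla_X\RRR$ and~\eqref{equ:7} one gets $\nabla_X\RRR=\Delta\RRR-\RRR+2|\Ric|^2$. Writing $\Ric=T+\tfrac{\RRR}{n-1}(g-\mathbf{n}\otimes\mathbf{n})$ and using $\tr T=0$, $|g-\mathbf{n}\otimes\mathbf{n}|^2=n-1$ and $\langle T,g-\mathbf{n}\otimes\mathbf{n}\rangle=-T(\mathbf{n},\mathbf{n})$, a direct computation gives $2|\Ric|^2-\RRR=\rho+\tfrac{2}{n-1}\rho^2+2|T|^2-\tfrac{4\RRR}{n-1}T(\mathbf{n},\mathbf{n})$. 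Now I invoke the quantitative decay already at hand: $|T|^2=O(f^{-a(n)})$ by Proposition~\ref{Tdecay}, $T(\mathbf{n},\mathbf{n})=o(f^{-1})$ by Lemma~\ref{T_decay}, and $|\Delta\RRR|\le\sqrt{n}\,|\nabla^2\RRR|=O(f^{-a(n)/2+\eps})$ by Corollary~\ref{Rdecay} (this is where $n\ge4$ enters). Since $\rho=o(1)$ and $a(n)\le1$, these combine to $\nabla_X\RRR=\rho\,(1+o(1))+O(f^{-a(n)/2+\eps})$, and dividing by $|X|^2=f(1+o(1))$ (and using $\partial_f\RRR=\partial_f\rho$) yields the equation
$$
\partial_f\rho\,=\,\frac{1+\phi(f,\theta)}{f}\,\rho\,+\,q(f,\theta),\qquad \phi=o(1),\quad |q|\le C\,f^{-1-b},
$$
along each integral curve of $\nabla f$, with $b:=a(n)/2-\eps>0$ for $\eps$ small.

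Then I would integrate this ODE using the integrating factor $\mu(f):=\exp\!\big(-\!\int_{f_1}^{f}\tfrac{1+\phi}{s}\,ds\big)$, which turns it into $(\mu\rho)'=\mu q$. Since $\phi=o(1)$ an elementary estimate gives $\int_{f_1}^{f}\tfrac{\phi}{s}\,ds=o(\log f)$, so $\mu(f)=f_1f^{-1}e^{o(\log f)}$ and in particular $f^{-1-\delta}\lesssim\mu(f)\lesssim f^{-1+\delta}$ for any fixed $\delta>0$ and $f$ large. Hence $\mu q\in L^1([f_1,\infty))$, so $\mu\rho$ has a finite limit at infinity; but $\mu\to0$ and $\rho\to0$ along the curve (uniform asymptotic cylindricity), so this limit is $0$. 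This forces $(\mu\rho)(f)=-\int_f^{\infty}\mu(s)q(s)\,ds$, whence
$$
|\rho(f)|\,=\,\frac{1}{\mu(f)}\Big|\int_f^{\infty}\mu(s)\,q(s)\,ds\Big|\,\lesssim\,f^{1+\delta}\!\int_f^{\infty}s^{-2-b+\delta}\,ds\,\lesssim\,f^{-b+2\delta}.
$$
Letting $\eps$ (in the use of Corollary~\ref{Rdecay}) and $\delta$ be arbitrarily small gives $\rho=O(f^{-a(n)/2+\eps})$ for every $\eps>0$, uniformly in $\theta$, which is the claim.

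The hard part is precisely the mechanism that kills the growing mode. One cannot run a maximum–principle barrier directly on the elliptic equation $\Delta_f\rho=-\rho(1+o(1))+O(f^{-a(n)})$: since $v=f-n/2>0$ satisfies $\Delta_f v=-v$, the operator $\Delta_f+1$ has no decaying positive supersolution of the form $f^{-\alpha}$ — indeed $(\Delta_f+1)(f^{-\alpha})=(\alpha+1)f^{-\alpha}(1+o(1))>0$ — and a naive integration of $|\nabla\RRR|=O(f^{-a(n)/2+\eps})$ along minimizing geodesics also fails, the integrand behaving like $r^{-a(n)+2\eps}$, which is not integrable for $a(n)\le1$. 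The passage to the one–dimensional equation along $\nabla f$, where the forcing picks up the extra factor $|X|^{-2}\sim f^{-1}$ and the uniform limit $\RRR\to(n-1)/2$ excludes the solution $\rho\sim f$, is exactly what makes the estimate go through; checking that all the $o(1)$ and $O(\cdot)$ bounds above are uniform in $\theta$ is routine given Proposition~\ref{Tdecay} and Corollary~\ref{Rdecay}.
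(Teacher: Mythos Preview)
Your proof is correct and follows essentially the same route as the paper: both derive the first–order equation $\nabla_X\rho=\tfrac{2\RRR}{n-1}\rho+O(f^{-a(n)/2+\eps})$ along the flow of $\nabla f/|\nabla f|^2$ (using Proposition~\ref{Tdecay}, Lemma~\ref{T_decay} and Corollary~\ref{Rdecay} to control the remainder), and then integrate using the uniform limit $\rho\to0$ to eliminate the growing homogeneous solution. The paper simply states ``integrating this equality along the flow \ldots\ and using the fact that $U$ tends to zero at infinity gives the result'', whereas you spell out the integrating–factor argument explicitly; the content is the same.
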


\begin{proof}
Recall that $\Delta \RRR+2\arrowvert \Ric\arrowvert^2=\langle \nabla f, \nabla \RRR \rangle+ \RRR$. Therefore, if $U:=(n-1)/2-\RRR$, then $U$ satisfies
\begin{eqnarray*}
\langle \nabla f, \nabla U\rangle &=& -\Delta \RRR+\RRR-2\left(\arrowvert T\arrowvert^2+\frac{\RRR^2}{n-1}-\frac{2\RRR}{n-1}T(\mathbf{n},\mathbf{n})\right)\\
&=&\frac{2\RRR}{n-1}U+\textit{O}(f^{-a(n)/2+\epsilon}),
\end{eqnarray*}
where we have used Proposition~\ref{T_decay},~\ref{Tdecay} and Corollary~\ref{Rdecay}.
Integrating this equality along the flow generated by $\nabla f/|\nabla f|^2$ and using the fact that $U$ tends to zero at infinity gives the result.
\end{proof}

\begin{prop}\label{prop-est-cur-op}
Let $(M^n, g, f)$, $n\geq 4$, be a complete noncompact asymptotically cylindrical gradient shrinking Ricci soliton. Then, we have that
\begin{eqnarray*}
\Rm=\frac{1}{2(n-2)}(g-\mathbf{n}\otimes\mathbf{n})\odot(g-\mathbf{n}\otimes\mathbf{n})+\textit{O}(f^{-a(n)/2+\epsilon}).
\end{eqnarray*}
\end{prop}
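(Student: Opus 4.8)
\emph{Strategy.} I would not estimate the Weyl tensor through an elliptic/parabolic argument; instead I would reduce the statement to a decay bound for the radial derivative $\nabla_{\nabla f}\Rm$ and integrate it along the flow of $\nabla f/|\nabla f|^{2}$. On $\{f\ge f_{0}\}\cong[f_{0},+\infty)\times\{f=f_{0}\}$ that flow is complete, its trajectories are parametrised by $f$ and fill the end, and along them $\Rm$ tends to the model tensor $\frac{1}{2(n-2)}(g-\mathbf{n}\otimes\mathbf{n})\odot(g-\mathbf{n}\otimes\mathbf{n})$ --- this is the $o(1)$ fact already used in the proof of Proposition~\ref{PDE}, together with $\RRR\to\frac{n-1}{2}$ and the fact that $\mathbf{n}=\nabla f/|\nabla f|$ points along the cylinder axis while the cylinder, being conformally flat, has $W\equiv 0$. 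Hence an \emph{integrable} bound on $\partial_{f}\Rm$ immediately upgrades this to the asserted polynomial rate.

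\emph{Preliminary estimates.} From the definition~\eqref{T}, $\Ric=\frac{\RRR}{n-1}(g-\mathbf{n}\otimes\mathbf{n})+T$, so Propositions~\ref{Tdecay} and~\ref{sdecay} give $\Ric=\frac12(g-\mathbf{n}\otimes\mathbf{n})+O(f^{-a(n)/2+\eps})$ and therefore $\nabla^{2}f=\frac12 g-\Ric=\frac12\,\mathbf{n}\otimes\mathbf{n}+O(f^{-a(n)/2+\eps})$. Next, by the contracted second Bianchi identity together with the soliton identity~\eqref{equ:4}, $|\Rm(\cdot,\cdot,\cdot,X)|=|\div\Rm|\le C|\nabla\Ric|=O(f^{-a(n)/2+\eps})$ by Corollary~\ref{Rdecay}; since $|X|=O(f^{1/2})$ and $\Rm$ is symmetric in the pairs of indices, \emph{every component of $\Rm$ with $\mathbf{n}$ in some slot is $O(f^{-1/2-a(n)/2+\eps})$}. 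Finally, using $\nabla\RRR=2\Ric(X,\cdot)$, $\Ric(\mathbf{n},\mathbf{n})=o(f^{-1})$ and Corollary~\ref{Rdecay} one checks $\nabla_{X}\mathbf{n}=O(f^{-1/2-a(n)/2+\eps})$.

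\emph{Key estimate and conclusion.} Feed these into Lemma~\ref{lemm-sol-id}. The divergence terms $\nabla_{U}(\div\Rm),\nabla_{V}(\div\Rm)$ are $O(|\nabla^{2}\Ric|)=O(f^{-a(n)/2+\eps})$ by Corollary~\ref{Rdecay}. In the curvature terms, since $\nabla^{2}f=\frac12 g-\Ric$ the vector $\nabla^{2}f(U,\cdot)^{\sharp}$ is $\frac12 U$ minus the Ricci endomorphism applied to $U$, which by the first line equals $\frac12\langle U,\mathbf{n}\rangle\,\mathbf{n}+O(f^{-a(n)/2+\eps})|U|$ after the two $\frac12 U$ cancel; hence $\Rm(V,\nabla^{2}f(U,\cdot),W,Y)=\frac12\langle U,\mathbf{n}\rangle\,\Rm(V,\mathbf{n},W,Y)+O(f^{-a(n)/2+\eps})=O(f^{-a(n)/2+\eps})$, the last step because the factor $\Rm(V,\mathbf{n},W,Y)$ is small by the italicised remark above. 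Therefore
\[
|\nabla_{\nabla f}\Rm| \,=\, O(f^{-a(n)/2+\eps}),
\]
with constant uniform on the level sets of $f$. Now set $Z:=\nabla f/|\nabla f|^{2}$ and $E:=\Rm-\frac{1}{2(n-2)}(g-\mathbf{n}\otimes\mathbf{n})\odot(g-\mathbf{n}\otimes\mathbf{n})$. Using $|\nabla f|^{2}=f-\RRR\asymp f$ together with the bounds above one gets $|\nabla_{Z}E|=|\nabla f|^{-2}|\nabla_{\nabla f}E|=O(f^{-1-a(n)/2+\eps})$, which is integrable at infinity. Integrating along a trajectory $f\mapsto\gamma(f)$ of $Z$ in a parallel orthonormal frame, $|E(\gamma(f))|\le|E(\gamma(f_{1}))|+\int_{f}^{f_{1}}|\nabla_{Z}E|\,df'$ for all $f_{1}>f$; letting $f_{1}\to+\infty$ and using $|E|\to 0$ along the end (asymptotic cylindricity, as recalled in the first paragraph) yields $|E(\gamma(f))|=O(f^{-a(n)/2+\eps})$. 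Since every point of $\{f\ge f_{0}\}$ lies on such a trajectory and all the bounds are uniform in the starting point on $\{f=f_{0}\}$, this is the assertion.

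\emph{Main difficulty.} The only non-formal point is the cancellation in Lemma~\ref{lemm-sol-id}: the terms $\Rm(V,\nabla^{2}f(U,\cdot),W,Y)$ are a priori merely $O(1)$, and the argument works precisely because, after replacing $\nabla^{2}f$ by its cylindrical value $\frac12\,\mathbf{n}\otimes\mathbf{n}$, they collapse to mixed components $\Rm(\cdot,\mathbf{n},\cdot,\cdot)$, which are small by the soliton identity~\eqref{equ:4}. Everything else --- the preliminary estimates and the integration --- is bookkeeping with the results of Sections~\ref{1-cov-der}.
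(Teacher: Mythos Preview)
Your proof is correct and follows essentially the same route as the paper: bound $\nabla_{\nabla f}\Rm$ via Lemma~\ref{lemm-sol-id} together with the $\nabla^{k}\Ric$ decay of Corollary~\ref{Rdecay}, then integrate along the flow of $\nabla f/|\nabla f|^{2}$ using that the difference with the model tensor tends to zero at infinity. The only cosmetic difference is in how the curvature terms $\Rm(V,\nabla^{2}f(U,\cdot),W,Y)$ are handled: the paper substitutes $\nabla^{2}f=\tfrac12 g-\Ric$ and $\Ric=T+\tfrac{\RRR}{n-1}(g-\mathbf{n}\otimes\mathbf{n})$ to reach the form $\nabla_{\nabla f}\Rm=\nabla^{2}\Ric+\big(\tfrac{2\RRR}{n-1}-1\big)\Rm+\Rm\ast T$ and then invokes Proposition~\ref{sdecay} to kill the coefficient $\tfrac{2\RRR}{n-1}-1$, whereas you go directly to $\nabla^{2}f=\tfrac12\,\mathbf{n}\otimes\mathbf{n}+O(f^{-a(n)/2+\eps})$ and use identity~\eqref{equ:4} to bound the residual $\Rm(\cdot,\mathbf{n},\cdot,\cdot)$ components---these are equivalent pieces of bookkeeping.
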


%
%

\begin{proof}
In the case of a shrinking soliton, Lemma \ref{lemm-sol-id} tells us
\begin{eqnarray*}
\nabla_{\nabla f}\Rm=\nabla^2\Ric-\Rm+\Rm\ast\Ric.
\end{eqnarray*}
Now, by the very definition of $T$, we get
\begin{eqnarray*}
\nabla_{\nabla f}\Rm=\nabla^2\Ric+\Big(\frac{2\RRR}{n-1}-1\Big)\Rm+\Rm\ast T.
\end{eqnarray*}
Therefore, by Proposition~\ref{T_decay},~\ref{Tdecay} and Corollary~\ref{Rdecay}, one has
\begin{eqnarray*}
\nabla_{\nabla f}\Rm=\textit{O}(f^{-a(n)/2+\epsilon}).
\end{eqnarray*}
As 
$$
\nabla_{\nabla f}[(g-\mathbf{n}\otimes\mathbf{n})\odot(g-\mathbf{n}\otimes\mathbf{n})]=\textit{O}(\Ric(\mathbf{n},\cdot))=\textit{O}(f^{-1/2})\,,
$$
one has
\begin{eqnarray*}
\nabla_{\nabla f}(\Rm-\frac{1}{2(n-2)}(g-\mathbf{n}\otimes\mathbf{n})\odot(g-\mathbf{n}\otimes\mathbf{n}))=\textit{O}(f^{-a(n)/2+\epsilon}).
\end{eqnarray*}
Integrating this estimate along the flow generated by $\nabla f/|\nabla f|^2$ we conclude the proof of the proposition.
\end{proof}

Let $M_{t}$ be a connected component of the level set $\{f=t\}$ and let $g^{(t)}$ be the metric induced by $g$ on $M_{t}$. We notice that the second fundamental form of $M_{t}$ satisfies
$$
h^{(t)}_{ij}=\frac{\nabla_{i}\nabla_{j} f}{|\nabla f|} = \textit{O}(t^{-1/2})\,.
$$
Thus, combining the last proposition with Gauss equations, we obtain
\begin{eqnarray*}
\RRR^{(t)}_{ijkl} &=& \RRR_{ijkl}-h^{(t)}_{jk}h^{(t)}_{il}+h^{(t)}_{ik}h^{(t)}_{jl} = \frac{1}{2(n-2)}(g^{(t)}\odot g^{(t)})_{ijkl} + \textit{O}(t^{-a(n)/2+\eps}) \\
&=& \frac{\RRR^{(t)}}{(n-1)(n-2)}(g^{(t)}\odot g^{(t)})_{ijkl} +\textit{O}(t^{-a(n)/2+\eps})\,,
\end{eqnarray*}
where we have used Proposition~\ref{sdecay} in the last equality. As a consequence of the Riemann-Cartan uniformization theorem, for $t$ large enough, there exists a family of diffeomorphisms $\phi_{t}:  M_{t}\rightarrow \mathbb{S}^{n-1}$ such that
$$
||2(n-2)g^{(t)}-\phi^{*}_{t}g^{\mathbb{S}^{n-1}}||_{C^{k}(M_{t}, g^{(t)})} = \textit{O}(t^{-a(n)/2+\eps})\,,
$$
for every $k\geq 0$ and every $\eps>0$.

\section{Almost Killing fields at infinity}\label{Alm-Kill-Fiel}

It well known that on the $(n-1)$-dimensional sphere there are $n(n-1)/2$ linearly independent Killing vector fields, hence, by construction the same is true on $(M_{t},\phi^{*}_{t}g^{\mathbb{S}^{n-1}})$, for $t$ large enough. The aim of the following sections is to show that for some $t_0$, the $(n-1)$-dimensional manifold $(M_{t_0},g^{(t_0)})$ admits $n(n-1)/2$ Killing vector fields as well. By classical results, this will imply that $(M_{t},g^{(t)})$ must be homothetic to the round sphere $\mathbb{S}^{n-1}$.
%

We consider now the sequence $t_m = 2^m$ and the corresponding sequence of Riemannian manifolds $(M_{t_m^2/4}, \tilde g ^{m})$, where we set 
$$
\tilde{g}^{m}:=\tfrac{1}{2(n-2)}\phi^{*}_{t_m^2/4}g^{\mathbb{S}^{n-1}} \, .
$$
We then let $\{ U_i^m\}_{i= 1, \ldots , n(n-1)/2}$ be a collection of linearly independent Killing vector fields for $\tilde g^m$. By the results of the previous section, these vector fields can be regarded as approximate Killing vector fields on $M_{t_m^2/4}$ for the matrix $g^{t^2_m/4}$. In particular, it is possible to prove that 
\begin{itemize}
\item $|| U^m_i ||_{C^k ( \,M_{t_m^2/4} \, , \,  g^{(t^2_m/4)} \,)} \, = \, \textit{O}\, (1)$, for every $k\geq 0$.
\item $\Li_{U_i^m} \, g^{(t^2_m/4)} \, = \, \textit{O} \, (t_m^{- a(n) +\epsilon})$\,. \phantom{$|| U^m_i ||_{C^k ( \,M_{t_m^2/4} \, , \,  g^{(t^2_m/4)} \,)} \, = \, \textit{O}(1)$}
\item $\int_{M_{t_m^2/4}}\langle U_i^m,U_j^m\rangle \, d\tilde{\mu}_{m} \, = \, \vol(M_{t_m^2/4}, \tilde g^m) \, \delta_{ij} $.
\end{itemize}

In the next proposition, we are going to extend these estimates to an annulus bounded by the level sets $M_{t^2_m/4}$. To do that, we define $\Omega_m:= \{ t^2_m/4 \leq f \leq t^2_m\}$, so that $\partial \Omega_m = M_{t_m^2/4} \cup M_{t_m^2}$ and we extend the vector fields $\{U_i^m\}$ on $\Omega_m$ by imposing the condition $[U_i^m,X]=0$. With a small abuse of notation, we still denote by $\{U^m_i\}$ the extended vector fields.
\begin{prop}
Let $(M^n, g, f)$, $n\geq 3$, be a complete noncompact asymptotically cylindrical gradient shrinking Ricci soliton. Then, with the notation introduced above, we have that for every $m \in \mathbb{N}$ and every $i \in \{1, \ldots, n(n-1)/2\}$ the following estimates hold
\begin{eqnarray*}
&&(i) \,\, \quad || U^m_i ||_{C^k (\Omega_m \, , \, g \, )} \, = \, \textit{O} \, (1) \, ,  \quad \hbox{for every $k\geq0$}, \\
&&(ii)  \, \quad \hbox{$\sup_{\Omega_m}$} | \Li_{U_i^m} g \,  | \, = \, \textit{O} \, (t_m^{-a(n)+\epsilon}),\\
&&(iii) \quad \hbox{$\sup_{\Omega_m}$} \arrowvert \langle U_i^m,\mathbf{n}\rangle\arrowvert \, = \, \textit{O} \, (t_m^{- 1 - a(n) +\epsilon}),\\
&&(iv) \, \quad\Delta_f U^m_i+U^m_i/2 \, = \, \textit{O} \,(t_m^{-a(n)+\epsilon}) ,  \\
&&(v) \, \quad\int_{M_t} \langle U_i^m,U_j^m\rangle d\mu_{t} \, =  \, \vol(M_t , g ^{(t)}) \, \delta_{ij}+\textit{O} \, (t^{-a(n)/2+\epsilon}), \quad\hbox{for every $t\in[t_m^2/4,t_m^2]$} \,.
\end{eqnarray*}
\end{prop}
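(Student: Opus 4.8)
The plan is to derive (i)--(v) by integrating differential identities along the flow $\varphi_{\tau}$ of $X=\nabla f$, exploiting that on an asymptotically cylindrical soliton one has $|\nabla f|^{2}=f-\RRR$ with $\RRR$ bounded, so that $|\nabla f|^{2}$ is comparable to $f$ and $\tfrac{d}{d\tau}\log f(\varphi_{\tau})=1+\textit{O}(f^{-1})$. Hence $\Omega_{m}=\{t_{m}^{2}/4\le f\le t_{m}^{2}\}$ is swept out by $\varphi_{\tau}$ over a $\tau$-interval of length $\log 4+\textit{o}(1)$, \emph{uniformly in $m$}, and every estimate will reduce to a Gronwall inequality over an interval of length $\textit{O}(1)$ with coefficients controlled by the (bounded) curvature. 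Since $[U_{i}^{m},X]=0$ on $\Omega_{m}$, the soliton equation $\Hess f=\tfrac12 g-\Ric$ supplies the two structural identities $\nabla_{X}U_{i}^{m}=\nabla_{U_{i}^{m}}X=\tfrac12 U_{i}^{m}-\Ric(U_{i}^{m},\cdot)^{\sharp}$ and $\nabla_{X}X=\tfrac12 X-\tfrac12\nabla\RRR$ (using \eqref{equ:2}), together with $\Li_{X}g=2\Hess f=g-2\Ric$. I would prove the estimates in the order (i)$\Rightarrow$(iii)$\Rightarrow$(ii)$\Rightarrow$(iv), with (v) alongside; the substantive range being $n\ge4$, the case $n=3$ following from the classification in~\cite{caochenzhu}.

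For (i), commuting $\nabla_{X}$ past $\nabla^{k}$ in the first structural identity and using the boundedness of $\Ric$, $\Rm$ and all their derivatives gives $|\nabla_{X}\nabla^{k}U_{i}^{m}|\le C_{k}\sum_{j\le k}|\nabla^{j}U_{i}^{m}|$, so Gronwall from the inner slice $M_{t_{m}^{2}/4}$, where $\|U_{i}^{m}\|_{C^{k}}=\textit{O}(1)$, yields (i). For (iii), since $U_{i}^{m}$ is tangent to $M_{t_{m}^{2}/4}$, the function $q:=\langle U_{i}^{m},\mathbf{n}\rangle=\langle U_{i}^{m},X\rangle/|\nabla f|$ vanishes on the inner slice, and a direct computation from the structural identities together with \eqref{equ:2}, \eqref{equ:3} gives
\[
X(q)=\tfrac12\,q-\frac{\langle\nabla\RRR,U_{i}^{m}\rangle}{|\nabla f|}+\frac{q\,\langle\nabla\RRR,X\rangle}{2|\nabla f|^{2}}=\tfrac12\,q+\textit{O}\bigl(f^{-1/2-a(n)/2+\epsilon}\bigr)(1+|q|),
\]
where I used (i) and $|\nabla\RRR|=\textit{O}(f^{-a(n)/2+\epsilon})$ from Corollary~\ref{Rdecay}. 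Integrating from $q=0$ over the $\textit{O}(1)$ time interval, with $f$ comparable to $t_{m}^{2}$ on $\Omega_{m}$, gives $|q|=\textit{O}(t_{m}^{-1-a(n)+\epsilon})$, i.e.\ (iii); interpolating this $C^{0}$ bound against the $C^{k}$ bound of (i) also gives the corresponding decay of $\nabla q$.

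The crux is (ii). Writing $h:=\Li_{U_{i}^{m}}g$ and $\Ric=\tfrac12(g-\mathbf{n}\otimes\mathbf{n})+E$, the identity $[U_{i}^{m},X]=0$ and $\Li_{X}g=g-2\Ric$ yield
\[
\Li_{X}h=\Li_{U_{i}^{m}}(g-2\Ric)=h-2\,\Li_{U_{i}^{m}}\Ric=\Li_{U_{i}^{m}}(\mathbf{n}\otimes\mathbf{n})-2\,\Li_{U_{i}^{m}}E=:F.
\]
By Propositions~\ref{Tdecay},~\ref{sdecay},~\ref{prop-est-cur-op} and Corollary~\ref{Rdecay} one has $\|E\|_{C^{k}}=\textit{O}(f^{-a(n)/2+\epsilon})$, hence $\Li_{U_{i}^{m}}E=\textit{O}(t_{m}^{-a(n)+\epsilon})$ by (i). For the other term one writes $\Li_{U_{i}^{m}}\mathbf{n}^{\flat}=|\nabla f|^{-1}d\langle U_{i}^{m},X\rangle-|\nabla f|^{-1}U_{i}^{m}(|\nabla f|)\,\mathbf{n}^{\flat}$ and estimates each piece using (iii), its $C^{1}$ version, and $|\nabla\mathbf{n}|=\textit{O}(f^{-1/2})$; the delicate point is that the naive bound $|\langle\nabla U_{i}^{m},X\rangle|=\textit{O}(1)$ only gives $\Li_{U_{i}^{m}}(\mathbf{n}\otimes\mathbf{n})=\textit{O}(|\nabla f|^{-1})=\textit{O}(t_{m}^{-1})$, which is still $\textit{O}(t_{m}^{-a(n)+\epsilon})$ precisely because $a(n)\le1$. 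Thus $\sup_{\Omega_{m}}|F|=\textit{O}(t_{m}^{-a(n)+\epsilon})$. Converting $\Li_{X}h$ into the covariant derivative gives $\nabla_{X}h=F-h+\Ric\ast h$, hence $X(|h|^{2})\le|F|^{2}+C|h|^{2}$; integrating from the inner slice — where every component of $h$ is $\textit{O}(t_{m}^{-a(n)+\epsilon})$ (the tangential part equals $\Li_{U_{i}^{m}}g^{(t_{m}^{2}/4)}$, while the mixed and normal components are governed by $q=0$, $|\nabla\mathbf{n}|=\textit{O}(f^{-1/2})$ and $\nabla_{\mathbf{n}}U_{i}^{m}=\textit{O}(|\nabla f|^{-1})$) — over the $\textit{O}(1)$ interval yields (ii).

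Finally, (iv) follows from the pointwise identity $\Delta_{f}U_{i}^{m}+\tfrac12 U_{i}^{m}=\Delta U_{i}^{m}+\Ric(U_{i}^{m})=\div(\Li_{U_{i}^{m}}g)-\tfrac12\nabla\bigl(\tr_{g}\Li_{U_{i}^{m}}g\bigr)$ combined with (ii), (i) and interpolation to pass from $C^{0}$-smallness of $\Li_{U_{i}^{m}}g$ to $C^{0}$-smallness of its first derivative. For (v), I would differentiate $I_{ij}(t):=\int_{M_{t}}\langle U_{i}^{m},U_{j}^{m}\rangle\,d\mu_{t}$ along the flow of $\nabla f/|\nabla f|^{2}$: the integrand changes at rate $|\nabla f|^{-2}X\langle U_{i}^{m},U_{j}^{m}\rangle=|\nabla f|^{-2}\bigl(\langle U_{i}^{m},\mathbf{n}\rangle\langle U_{j}^{m},\mathbf{n}\rangle-2E(U_{i}^{m},U_{j}^{m})\bigr)=\textit{O}(f^{-1-a(n)/2+\epsilon})$ by (iii), and the volume form changes at rate $H/|\nabla f|=\textit{O}(f^{-1-a(n)/2+\epsilon})$ since $\RRR=(n-1)/2+\textit{O}(f^{-a(n)/2+\epsilon})$ by Proposition~\ref{sdecay}; integrating from $t=t_{m}^{2}/4$, where $I_{ij}=\vol(M_{t_{m}^{2}/4},g^{(t_{m}^{2}/4)})\,\delta_{ij}+\textit{O}(t_{m}^{-a(n)+\epsilon})$ by the $C^{k}$-closeness of $g^{(t_{m}^{2}/4)}$ to $\tilde g^{m}$, gives (v). I expect the only real obstacle to be the borderline estimate for $\Li_{U_{i}^{m}}(\mathbf{n}\otimes\mathbf{n})$ in (ii): it forces one to establish not merely (iii) but a $C^{1}$ version of it, and the bookkeeping closes only because $a(n)\le1$; everything else is a soft Gronwall argument over a uniformly bounded flow time.
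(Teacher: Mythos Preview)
Your approach is essentially the paper's: propagate each estimate by a Gronwall argument along the flow of $X$ across $\Omega_{m}$ (a flow-time interval of uniformly bounded length), reduce (iv) to (ii) via the identity $\Delta U+\Ric(U)=\div(\Li_{U}g)-\tfrac12\nabla\tr(\Li_{U}g)$ plus interpolation, and differentiate the slice integral for (v). Your reordering (i)$\Rightarrow$(iii)$\Rightarrow$(ii) and the explicit decomposition $\Ric=\tfrac12(g-\mathbf{n}\otimes\mathbf{n})+E$ in (ii) make the role of the borderline term $\Li_{U_{i}^{m}}(\mathbf{n}\otimes\mathbf{n})$ and of the constraint $a(n)\le1$ more transparent than the paper's somewhat terse ``$\Li_{U_{i}^{m}}\Ric$ is bounded, hence the result by integration'', but the underlying arguments coincide.
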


\begin{rem}
\label{remeq}
We notice that in general, a Killing field $U$ on a Riemannian manifold satisfies $\Delta U+\Ric(U, \cdot\,)=0$.
Using the shrinking solitons equation and recalling that, by construction we have $[U_i^m,X]=0$, one can deduce that $\Delta U_i^m+\Ric(U_i^m ,\, \cdot \, )=\Delta_fU_i^m+U_i^m/2$. Therefore, part $(iv)$ of the statement can be thought as an estimate of how far the vector fields $U_i^m$ are from being Killing. 
\end{rem}

\begin{proof} $(i)$ This statement is a consequence of the the equations $[U_i^m , X ]=0$, which we have used to extend our vector fields. 

\smallskip

$(ii)$ Let us check this estimate on $M_{t_m^2/4}$ first. Indeed, if $V$ is orthogonal to $X$,
\begin{eqnarray*}
\left( \Li_{U_i^m}g \right) \, (X,V)&=&\langle \nabla_XU_i^m,V\rangle+\langle \nabla_VU_i^m,X\rangle\\
&=&\langle\nabla_{U_i^m}X,V\rangle-\langle U_i^m,\nabla_VX\rangle=0.
\end{eqnarray*}
To proceed, we compute
\begin{eqnarray*}
\left(\Li_{U_i^m}g \right)(X,X)=2\langle\nabla_XU_i^m,X\rangle=2\nabla^2f(U_i^m,X)=-2\Ric(U_i^m,X).
\end{eqnarray*}
Therefore, $ \left( \Li_{U_i^m}g \right)(\mathbf{n},\mathbf{n})=\textit{O}(t_m^{-2 - a(n)+ \epsilon})$ on $M_{t_m^2/4}$.
Now, as $[U_i^m,X]=0$,
\begin{eqnarray*}
\Li_X(\Li_{U_i^m} g)=\Li_{U_i^m}(\Li_X g)=\Li_{U_i^m}(g-2\Ric).
\end{eqnarray*}
Moreover, $(\Li_X S) \, = \, \nabla_X S +  S -\Ric \, \circ \,  S - S\circ\Ric$, for any symmetric $2$-tensor $S$. Therefore, for $S:=\Li_{U_i^m}g$, we get
$$
\nabla_X(\Li_{U_i^m}g ) = -2 (\Li_{U_i^m}\Ric ) + \Ric \circ  (\Li_{U_i^m} g ) + (\Li_{U_i^m}g ) \circ \Ric  .
$$ 
Consequently, as $( \Li_{U_i^m} \Ric)$ is bounded, one has by Kato inquality, 
\begin{eqnarray*}
\nabla_{X/|X|^2} \left| \left(\Li_{U_i^m}g \right) \right| \leq 
\left|  \nabla_{X/|X|^2} \left(\Li_{U_i^m}g \right) \right|
\leq \frac{2\arrowvert\Ric\arrowvert}{\arrowvert X\arrowvert^2} \,  \left| (\Li_{U_i^m}g ) \right| +\textit{O}(f^{-1}).
\end{eqnarray*}
Hence, we obtain the result by integrating over $\Omega_m$.

\smallskip

$(iii)$ As before, we compute,
\begin{eqnarray*}
\nabla_X \langle U_i^m,X\rangle&=&\langle \nabla_XU_i^m,X\rangle+\langle U_i^m,\nabla_XX\rangle\\
&=&2\langle \nabla_{U_i^m}X,X\rangle=\langle U_i^m,X\rangle-2\Ric(X,U_i^m)\\
&=&\langle U_i^m,X\rangle+\textit{O}(f^{-a(n)/2+\epsilon}).
\end{eqnarray*}
Now, by construction, we have that $\langle U_i^m,X\rangle=0$ on $M_{t_m^2/4}$. Hence, integrating the previous estimate on $\Omega_m$, we obtain $\langle U_i^m,X\rangle \,= \,\textit{O}(t_m^{-a(n)+\epsilon})$ on $\Omega_m$ and the result follows at once.

\smallskip

$(iv)$ As we noticed in Remark~\ref{remeq}, one has that $\Delta U_i^m+\Ric(U_i^m , \cdot ) \, = \, \Delta_fU_i^m+U_i^m/2$. On the other hand, it holds the identity  
$$
\div(\Li_{U_i^m}g ) \, - \, \frac12 \nabla(\tr(\Li_{U_i^m}g )) \,\, = \,\, \Delta U_i^m \, + \,\Ric(U_i^m , \cdot ) \, .
$$
To estimate the left hand side, we notice that (ii) gives $\hbox{$\sup_{\Omega_m}$} | \Li_{U_i^m} g \, | \, = \, \textit{O} \, (t_m^{-a(n)+\epsilon})$. Moreover, it is possible to deduce form (i) that $ \sup_{\Omega_m}\arrowvert\nabla^{k}\Li_{U_i^m}(g)\arrowvert =\textit{O}\, (1)$, for every $k\geq 0$. By the interpolation inequalities, we know that $ \sup_{\Omega_m}\arrowvert\nabla\Li_{U_i^m}(g)\arrowvert=\textit{O}(t_m^{-a(n)+\epsilon})$. This implies the desired estimates.

\smallskip

$(v)$ To see the last estimate, we denote by $\HHH^{(t)}$ the mean curvature of $M_t$ and we compute
\begin{eqnarray*}
\frac{d}{dt}\int_{M_t}\langle U_i^m,U_j^m\rangle \, d\mu_t & = & \int_{M_t}\langle U_i^m,U_j^m\rangle \, \frac{\HHH^{(t)}}{\arrowvert X\arrowvert} \, d\mu_t +   \int_{M_t}\frac{\langle\nabla_XU_i^m,U_j^m\rangle+\langle U_i^m,\nabla_XU_j^m\rangle}{\arrowvert X\arrowvert^2}   \, d\mu_t \\
&=&\int_{M_t}\langle U_i^m,U_j^m\rangle\frac{(n-1)/2- \RRR+\Ric(\mathbf{n},\mathbf{n})}{\arrowvert X\arrowvert^2}d\mu_t +  \int_{M_t}\frac{ 2 \, \langle\nabla_{U_i^m}X,U_j^m\rangle}{\arrowvert X\arrowvert^2} \, d\mu_t
\\
&=&\textit{O}\, (t^{-1 - a(n)/2 +\epsilon}) + \int_{M_t}\frac { \langle U_i^m,U_j^m\rangle  -  2 \Ric(U_i^m,U_j^m)}{\arrowvert X\arrowvert^2} \,  d\mu_t\\
&=& \textit{O}\, (t^{-1 - a(n)/2 +\epsilon}) + \int_{M_t}\frac{ \langle U_i^m,\mathbf{n}\rangle\langle U_j^m,\mathbf{n}\rangle - 2 \, T(U_i^m,U_j^m)}{\arrowvert X\arrowvert^2} \, d\mu_t\\
&=& \textit{O}\, (t^{-1 - a(n)/2 +\epsilon}) ,
\end{eqnarray*}
where we used the estimates obtained in the previous section. The result follows now by a simple integration.
\end{proof}
For future convenience, we simplify the notations and summarize the results of this section in the following proposition.

\begin{prop}
\label{prop-ex-alm-kill}
Let $(M^n, g, f)$, $n\geq 3$, be a complete noncompact asymptotically cylindrical gradient shrinking Ricci soliton. Then, there exists a collection of $n(n-1)/2$ vector fields $\{U_i \}_{i=1, \ldots, n(n-1)/2}$ defined on $(M^n,g)$ such that, for every $i \in \{1, \ldots, n(n-1)/2\}$, the following estimates hold
\begin{eqnarray*}
(i)\quad && |\nabla^kU_i \,| \, = \, \textit{O} \, (1), \quad \mbox{for every $k\geq 0$,}\\
(ii) \quad&& |\Li_{U_i}g \, | \, = \, \textit{O} \, (f^{-a(n)/2+\epsilon}),\\
(iii)\quad &&\Delta_f U_i+U_i/2 \, = \, \textit{O} \,(f^{-a(n)/2+\epsilon}),\\
(iv)\quad &&\int_{M_t}\langle U_i,U_j\rangle \,d\mu_t \, = \, \vol(M_t, g^{(t)})	\, \delta_{ij}+\textit{O}\, (t^{-a(n)/2+\epsilon}).
\end{eqnarray*}
\end{prop}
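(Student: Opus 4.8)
The statement is essentially a repackaging of the previous proposition, so the plan is largely organizational. First I would fix, for each $m \in \mathbb{N}$, the vector fields $\{U_i^m\}$ already constructed on the annulus $\Omega_m = \{t_m^2/4 \le f \le t_m^2\}$, and observe that consecutive annuli overlap: $\Omega_m \cap \Omega_{m+1} = \{t_m^2 \le f \le t_{m+1}^2/4\} = \{t_m^2 \le f \le t_m^2\}$ — wait, this is a single level set, so in fact I would instead choose the sequence $t_m = 2^m$ so that $\Omega_m = \{t_m^2/4 \le f \le t_m^2\}$ and $\Omega_{m+1} = \{t_{m+1}^2/4 \le f \le t_{m+1}^2\} = \{t_m^2 \le f \le 4 t_m^2\}$, which share the level set $\{f = t_m^2\}$. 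The natural way to glue is therefore \emph{not} to interpolate but simply to define $U_i$ on the region $\{t_m^2/4 \le f < t_{m+1}^2/4\}$ to equal $U_i^m$; this gives a globally defined (measurable, and on each piece smooth) vector field on $\{f \ge f_0\}$, and then extend inside the compact core $\{f < f_0\}$ arbitrarily, since all the estimates in the statement are asymptotic as $f \to \infty$.

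The one subtlety — and this is the step I expect to require the most care — is consistency of the asymptotic orders across the countably many pieces. On $\Omega_m$ one has bounds of the form $\textit{O}(t_m^{-a(n)+\epsilon})$, and on the piece $\{t_m^2/4 \le f \le t_m^2\}$ one has $f \asymp t_m^2$, hence $t_m^{-a(n)} \asymp f^{-a(n)/2}$; so the estimates $(ii)$, $(iii)$ of the earlier proposition, stated in terms of $t_m$, translate into $\textit{O}(f^{-a(n)/2+\epsilon})$ uniformly in $m$, which is exactly what appears in $(ii)$ and $(iii)$ here. I would check that the implied constants can be taken independent of $m$: this follows because the asymptotically cylindrical hypothesis gives uniform geometry at infinity (uniform injectivity radius lower bound, uniform curvature and covariant-derivative-of-curvature bounds), so all the interpolation and Sobolev constants invoked in the proofs of Propositions~\ref{Tdecay} and the preceding one are uniform, and the Killing fields $U_i^m$ for the round metrics $\tilde g^m$ are normalized identically on each $(M_{t_m^2/4}, \tilde g^m)$. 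For part $(iv)$ of the earlier proposition, which is exactly $(iii)$ here modulo the $t_m \leftrightarrow f^{1/2}$ dictionary, one simply rewrites $\Delta_f U_i^m + U_i^m/2 = \textit{O}(t_m^{-a(n)+\epsilon}) = \textit{O}(f^{-a(n)/2+\epsilon})$ on the corresponding $f$-range. Part $(i)$ here is immediate from part $(i)$ there since $[U_i^m,X]=0$ propagates the $C^k$ bounds along the flow of $X$ and the bounds are $\textit{O}(1)$ uniformly. Part $(iv)$ here is part $(v)$ there, valid for $t \in [t_m^2/4, t_m^2]$, i.e.\ for all $t \ge f_0$ by letting $m$ range.

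Concretely, the write-up would go: $(a)$ record the dictionary $f \asymp t_m^2$ on $\Omega_m$ and hence $t_m^{-a(n)+\epsilon} = \textit{O}(f^{-a(n)/2+\epsilon})$; $(b)$ invoke the uniform asymptotically cylindrical geometry to see the constants in the earlier proposition are $m$-independent; $(c)$ define $U_i := U_i^m$ on $\{t_m^2/4 \le f < t_{m+1}^2/4\}$ and extend smoothly and arbitrarily over the compact core; $(d)$ translate estimates $(i)$–$(v)$ of the earlier proposition into the claimed $(i)$–$(iv)$. The only genuine mathematical content beyond bookkeeping is the uniformity of the constants, so I would state that carefully, citing the uniform $C^k$-closeness $\|2(n-2)g^{(t)} - \phi_t^* g^{\mathbb{S}^{n-1}}\|_{C^k(M_t, g^{(t)})} = \textit{O}(t^{-a(n)/2+\epsilon})$ established at the end of Section~\ref{1-cov-der}, which simultaneously controls all the level sets. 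That $C^k$ bound, together with the fact that the $U_i^m$ are pullbacks of a \emph{fixed} basis of Killing fields on the round sphere normalized in a fixed way, is what makes every constant uniform.
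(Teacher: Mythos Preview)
The paper offers no proof of this proposition beyond the sentence ``we simplify the notations and summarize the results of this section,'' so your reading of it as a repackaging is accurate, and the two points you isolate---the dictionary $f \asymp t_m^2$ on $\Omega_m$ (whence $t_m^{-a(n)+\epsilon} = \textit{O}(f^{-a(n)/2+\epsilon})$) and the $m$-uniformity of all implied constants coming from the uniform asymptotically cylindrical geometry---are exactly the content one has to supply.

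There is, however, a genuine gap in the construction you propose. Setting $U_i := U_i^m$ on $\{t_m^2/4 \le f < t_{m+1}^2/4\}$ produces a vector field that is in general not even continuous across the interface $\{f = t_m^2\}$: each family $\{U_i^m\}_i$ is obtained by pulling back a fixed Killing basis of $\mathbb{S}^{n-1}$ through an \emph{independently chosen} uniformization $\phi_{t_m^2/4}$ and then flowing along $X$, so on the common level set the two families $\{U_i^m\}$ and $\{U_i^{m+1}\}$ may differ by an $O(1)$ element of the isometry group of the sphere acting on $\mathfrak{so}(n)$, not by $\textit{O}(t_m^{-a(n)+\epsilon})$. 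With such a $U_i$, items $(i)$ for $k\ge 1$ and $(iii)$ are undefined at the interfaces, and the downstream use of the proposition---solving $\Delta_f V_i + V_i/2 = \Delta_f U_i + U_i/2$ on all of $M$ via Theorem~\ref{theo-ex-app-kill}---requires a globally smooth right-hand side. The repair is to align the bases inductively in $m$: on $M_{t_m^2}$ both $\{U_i^m\}$ and $\{U_i^{m+1}\}$ are $L^2$-orthonormal frames of approximate Killing fields for metrics that are $\textit{O}(t_m^{-a(n)+\epsilon})$-close to the same round metric, so there is an orthogonal change of basis carrying one to within $\textit{O}(t_m^{-a(n)+\epsilon})$ of the other; after this alignment a cutoff over a collar of unit width in $2\sqrt{f}$ glues $U_i^m$ to $U_i^{m+1}$ while preserving all four estimates (the cutoff contributes at most $\textit{O}(t_m^{-a(n)+\epsilon})$ to $\Li_{U_i}g$ and its derivatives, and condition $(iv)$ is stable under such perturbations).
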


\section{Interpolating almost Killing vector fields}\label{Vec-Interpol}

In this section, we will provide an a priori estimate as well as an existence result for solutions to the following equation
\begin{eqnarray}
\label{akvf}
\Delta_fV+{V}/{2} \, = \, Q,
\end{eqnarray}
provided the vector field $Q$ has a suitable asymptotic behavior at infinity. As we have already seen, the almost Killing vector fields constructed in the previous section satisfy an equation of this type. We start with the a priori estimate.

\begin{prop}\label{max-ppe-vec} Let $(M^n, g, f)$, $n\geq 3$, be a complete noncompact asymptotically cylindrical gradient shrinking Ricci soliton and let $V$ be a vector field satisfying
\begin{eqnarray*}
\Delta_fV+ {V}/{2} \, = \, Q,
\end{eqnarray*}
where $Q$ is vector field such that $Q=\textit{O}(f^{-a(n)/2+\epsilon})$. Then, there exists positive constants $A$ and $t_0$ such that, for $t_0<t_1<t_2$, one has the following estimate
\begin{eqnarray*}
\max_{t_1\leq f\leq t_2}\frac{\arrowvert V\arrowvert+Af^{-a(n)/2+\epsilon}}{f-n/2}=\max\left\{\max_{f=t_1}\frac{\arrowvert V\arrowvert+Af^{-a(n)/2+\epsilon}}{f-n/2};\max_{f=t_2}\frac{\arrowvert V\arrowvert+Af^{-a(n)/2+\epsilon}}{f-n/2}\right\}.
\end{eqnarray*}

\end{prop}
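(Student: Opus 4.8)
The plan is to prove a maximum principle at infinity using the function $v := f - n/2$ as a barrier, exploiting that $v$ is a positive eigenfunction of the drift Laplacian. First I would establish the key identity $\Delta_f v = -v$, which follows immediately from \eqref{equ:1}: since $\Delta f + \RRR = n/2$ and $\arrowvert X\arrowvert^2 + \RRR = f$, one computes $\Delta_f v = \Delta f - \langle X, \nabla f\rangle = (n/2 - \RRR) - \arrowvert X\arrowvert^2 = n/2 - f = -v$. Note that for $f$ large, $v > 0$, so division by $v$ is legitimate on $\{f > t_0\}$ for suitable $t_0 > n/2$.

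Next I would derive the scalar differential inequality for $\arrowvert V\arrowvert$. From the hypothesis $\Delta_f V + V/2 = Q$, a Bochner-type computation with the Kato inequality gives $\Delta_f \arrowvert V\arrowvert \geq \langle \Delta_f V, V/\arrowvert V\arrowvert\rangle = -\arrowvert V\arrowvert/2 + \langle Q, V/\arrowvert V\arrowvert\rangle \geq -\arrowvert V\arrowvert/2 - \arrowvert Q\arrowvert$, wherever $V \neq 0$ (and in the weak/viscosity sense everywhere). Since $\arrowvert Q\arrowvert = \textit{O}(f^{-a(n)/2+\epsilon})$, I would then check that the correction term $w := A f^{-a(n)/2+\epsilon}$ can absorb the $\arrowvert Q\arrowvert$ error: using $\Delta_f f = n/2 - \RRR = \textit{O}(1)$ and $\arrowvert \nabla f\arrowvert^2 = \textit{O}(f)$, one finds $\Delta_f(f^{-\beta}) = -\beta f^{-\beta-1}\Delta_f f + \beta(\beta+1) f^{-\beta-2}\arrowvert\nabla f\arrowvert^2$, and crucially $-\beta f^{-\beta-1}\langle X,\nabla f\rangle = -\beta f^{-\beta-1}\arrowvert X\arrowvert^2 = -\beta f^{-\beta} + \textit{O}(f^{-\beta-1})$; hence $\Delta_f(f^{-\beta}) \leq -\beta f^{-\beta} + \textit{O}(f^{-\beta-1})$ with $\beta = a(n)/2 - \epsilon$. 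So for $A$ large and $t_0$ large, $\Delta_f w + w/2 \leq -\arrowvert Q\arrowvert$ (here one uses $\beta - 1/2 > 0$, or more carefully that the dominant term $-\beta f^{-\beta}$ beats $+\tfrac12 f^{-\beta}$ only if $\beta > 1/2$; if $a(n)/2 - \epsilon \le 1/2$ one instead keeps $w/2$ and notes the inequality $\Delta_f w \le -\arrowvert Q\arrowvert - w/2$ can still be arranged by adjusting $A$ since $\arrowvert Q \arrowvert \le A' f^{-\beta}$ and we just need $\beta f^{-\beta} + \tfrac12 f^{-\beta}$, times $A$, to dominate, which holds trivially). Setting $\psi := \arrowvert V\arrowvert + w$, we get $\Delta_f \psi + \psi/2 \geq 0$ on $\{f > t_0\}$, i.e. $\psi$ is a drift-subsolution of $\Delta_f \psi = -\psi/2$.

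Then the heart is the comparison argument: consider the quotient $\varphi := \psi / v$ on the annulus $\{t_1 \leq f \leq t_2\}$ and show it attains its maximum on the boundary. Computing $\Delta_f \varphi$ using $\Delta_f(\psi/v) = \frac{1}{v}\Delta_f\psi - \frac{\psi}{v^2}\Delta_f v - \frac{2}{v}\langle \nabla\varphi, \nabla v\rangle = \frac{1}{v}(\Delta_f\psi + \psi) - \frac{2}{v}\langle\nabla\varphi,\nabla v\rangle$, and using $\Delta_f\psi + \psi/2 \geq 0$, i.e. $\Delta_f\psi + \psi \geq \psi/2 = \varphi v/2 \geq 0$, we obtain $\Delta_f\varphi + \frac{2}{v}\langle\nabla\varphi,\nabla v\rangle \geq 0$. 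This is a (degenerate-free, since $v > 0$) linear elliptic inequality without zeroth-order term of the wrong sign, so the classical strong maximum principle / Hopf lemma applies: $\varphi$ cannot have an interior maximum unless it is constant, and in either case $\max_{t_1\leq f\leq t_2}\varphi = \max_{\partial}\varphi = \max\{\max_{f=t_1}\varphi, \max_{f=t_2}\varphi\}$. This is exactly the claimed identity.

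The main obstacle I anticipate is bookkeeping the error terms so that $w = A f^{-a(n)/2+\epsilon}$ genuinely produces a supersolution of $\Delta_f w + w/2 \le -\arrowvert Q\arrowvert$ on the whole region $\{f > t_0\}$ — this requires care with the Cao–Zhou bounds $\arrowvert\nabla f\arrowvert^2 = \textit{O}(f)$ and $\Delta_f f = n/2 - \RRR = \textit{O}(1)$, and a clean choice of $t_0$ and $A$ depending only on $n$, $\epsilon$ and the constant in $Q = \textit{O}(f^{-a(n)/2+\epsilon})$. A secondary technical point is justifying the Bochner/Kato inequality for $\arrowvert V\arrowvert$ at points where $V$ vanishes; this is handled in the standard way by working in the barrier (viscosity) sense, or by replacing $\arrowvert V\arrowvert$ with $\sqrt{\arrowvert V\arrowvert^2 + \delta}$ and letting $\delta \to 0$.
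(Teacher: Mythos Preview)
Your approach is precisely the paper's: derive $\Delta_f|V| \geq -|V|/2 - |Q|$ via Kato, add the barrier $w = Af^{-\alpha}$ with $\alpha = a(n)/2 - \epsilon$ to obtain $\Delta_f\psi + \psi/2 > 0$ for $\psi = |V| + w$, then apply the maximum principle to the quotient $\varphi = \psi/v$ with $v = f - n/2$. Your quotient identity $\Delta_f\varphi = v^{-1}(\Delta_f\psi+\psi) - 2v^{-1}\langle\nabla v,\nabla\varphi\rangle$ is in fact slightly cleaner than the paper's version.

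There is, however, a sign slip in the barrier step that creates the unnecessary case distinction you worry about. You write $\Delta_f f = n/2 - \RRR = O(1)$, but that is $\Delta f$; by \eqref{equ:1} and \eqref{equ:3} one has $\Delta_f f = \Delta f - |\nabla f|^2 = n/2 - f$. Substituting this into your own formula $\Delta_f(f^{-\alpha}) = -\alpha f^{-\alpha-1}\Delta_f f + \alpha(\alpha+1)f^{-\alpha-2}|\nabla f|^2$ yields the \emph{positive} leading term $+\alpha f^{-\alpha}$, so that
\[
\Delta_f w + \tfrac{w}{2} \;\geq\; A\,f^{-\alpha}\Big(\alpha + \tfrac{1}{2} - \tfrac{\alpha n}{2f}\Big)\;\geq\;|Q|
\]
for $A$ large and $f \geq t_0$, since $|Q| \leq Cf^{-\alpha}$. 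This is the inequality actually required for $\Delta_f\psi+\psi/2\geq 0$ (your stated goal $\Delta_f w + w/2 \leq -|Q|$ has the wrong direction), and it holds for every $\alpha > 0$; the discussion of whether $\alpha > 1/2$ is an artifact of the sign error and should be deleted. With this correction the remainder of your argument goes through verbatim.
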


\begin{proof}
First of all, 
\begin{eqnarray*}
\Delta_f\arrowvert V\arrowvert^2&=&-\arrowvert V\arrowvert^2+2\arrowvert\nabla V\arrowvert^2+2\langle Q,V\rangle.
\end{eqnarray*}
Therefore,
\begin{eqnarray*}
2\arrowvert V\arrowvert\Delta_f\arrowvert V\arrowvert\geq- \arrowvert V\arrowvert^2+ 2\arrowvert\nabla V\arrowvert^2-2\arrowvert \nabla\arrowvert V\arrowvert\arrowvert^2-2\arrowvert Q\arrowvert \arrowvert V\arrowvert.
\end{eqnarray*}
Hence, by the Kato inequality,
\begin{eqnarray*}
\Delta_f\arrowvert V\arrowvert\geq- \frac{\arrowvert V\arrowvert}{2}-\arrowvert Q\arrowvert,
\end{eqnarray*}
as soon as $V$ does not vanish. 
\begin{eqnarray*}
\Delta_f(\arrowvert V\arrowvert+Af^{-\alpha})+\frac{\arrowvert V\arrowvert+Af^{-\alpha}}{2}&\geq& A\Delta_ff^{-\alpha}+\frac{A}{2}f^{-\alpha}-\arrowvert Q\arrowvert\\
&\geq&A\alpha f^{-\alpha-1}(f-n/2)\\
&&+A\alpha(\alpha+1)f^{-\alpha-2}\arrowvert X\arrowvert^2+\frac{A}{2}f^{-\alpha} -\arrowvert Q\arrowvert\\
&\geq&f^{-\alpha}\left(A\left(\alpha+1/2-\frac{\alpha n}{2f}\right)-f^{\alpha}\arrowvert Q\arrowvert\right) \,\,\, > \,\,\ 0,
\end{eqnarray*}
outside a compact set where $\alpha:=a(n)/2-\epsilon$. Finally, consider the function $v:=f-n/2$, which satisfies $\Delta_fv=-v$, and define $u:=\arrowvert V\arrowvert+Af^{-a(n)/2+\epsilon}$. For $v>0$, a direct computation gives
\begin{eqnarray*}
\Delta_f\left(\frac{u}{v}\right)&=&\left(\frac{\Delta_fu}{u}-\frac{\Delta_fv}{v}\right)\frac{u}{v}+2\langle\nabla v^{-1},\nabla u\rangle\\
&>&(-1/2+1)\frac{u}{v}+2\langle\nabla v^{-1},\nabla u\rangle\\
&>&\left(1/2-2\frac{\arrowvert X\arrowvert^2}{f^2}\right)\frac{u}{v}-2\langle\nabla\ln v,\nabla\left(\frac{u}{v}\right)\rangle\\
&>&-2\langle\nabla\ln v,\nabla\left(\frac{u}{v}\right)\rangle,
\end{eqnarray*}
outside a compact set. The result is now a consequence of the maximum principle.
\end{proof}

We are in the position to provide an existence result for the equation~\eqref{akvf}.

\begin{teo}\label{theo-ex-app-kill} Let $(M^n, g, f)$, $n\geq 3$, be a complete noncompact asymptotically cylindrical gradient shrinking Ricci soliton and let $Q$ be a vector field such that $Q=\textit{O}(f^{-a(n)/2+\epsilon})$. Then there exists a vector field $V$ such that,
\begin{eqnarray*}
\Delta_fV+ {V}/{2} \, = \, Q\quad\mbox{on $M^n$},  \qquad \hbox{and} \qquad V=\textit{O}\, (f^{-a(n)/2+\epsilon}).
\end{eqnarray*}
\end{teo}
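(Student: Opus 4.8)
The plan is to obtain $V$ as a limit of solutions to the equation on an exhausting sequence of compact domains, with uniform control coming from the weighted maximum principle of Proposition~\ref{max-ppe-vec}. Concretely, fix a large $t_0$ as in that proposition and, for each $j$, let $\Omega_j = \{ t_0 \le f \le T_j\}$ with $T_j \to +\infty$. On $\Omega_j$ I would solve the linear Dirichlet-type boundary value problem $\Delta_f V_j + V_j/2 = Q$ with $V_j = 0$ on $\partial\Omega_j = \{f=t_0\} \cup \{f=T_j\}$. Existence and uniqueness here is standard elliptic theory once one checks the operator $V \mapsto \Delta_f V + V/2$ has trivial kernel under zero boundary data: this follows because $\Delta_f$ is self-adjoint with respect to the weighted measure $e^{-f}d\mu$ (here acting on vector fields via the connection Laplacian, with the Weitzenb\"ock/Bochner term), and an integration by parts gives $\int (|\nabla V|^2 - |V|^2/2)\,e^{-f} = 0$ for a kernel element, which combined with the explicit positivity computation from the proof of Proposition~\ref{max-ppe-vec} (the barrier $f - n/2$) forces $V \equiv 0$.

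The heart of the argument is the uniform estimate. Since $Q = O(f^{-a(n)/2+\epsilon})$, pick $A$ and the exponent $\alpha = a(n)/2 - \epsilon$ so that $u := |V_j| + A f^{-\alpha}$ is a subsolution in the sense that $\Delta_f(u/v) + 2\langle \nabla \ln v, \nabla(u/v)\rangle > 0$ outside a compact set, where $v = f - n/2 > 0$. Applying Proposition~\ref{max-ppe-vec} on $\{t_0 \le f \le T_j\}$ and using that $V_j$ vanishes on both boundary components while $A f^{-\alpha}/v = O(f^{-\alpha - 1}) \to 0$, the maximum of $u/v$ over $\Omega_j$ is attained on $\{f = t_0\}$ and is bounded there by a constant independent of $j$ (depending only on $\sup_{f=t_0}|V_j|$, which I must also control — see below). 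Hence $|V_j| \le C\, v \cdot \sup_{f=t_0}(u/v) + \text{lower order}$... but this only gives linear growth, which is too weak. The fix is to run the maximum principle the other way: since $u/v$ has no interior maximum, it is also true that after subtracting off the boundary contribution one gets $|V_j|(x) \le C f(x)^{-\alpha}$ throughout, provided the boundary term at $f = t_0$ is itself $O(t_0^{-\alpha})$; this requires a separate interior Schauder estimate on a fixed neighborhood of $\{f=t_0\}$ bounding $\sup_{f=t_0}|V_j|$ by $C(\|Q\|_{C^0} + \sup_{f=2t_0}|V_j|)$ and then a bootstrap, or more cleanly, solving instead with a fixed nonzero Dirichlet datum on $\{f=t_0\}$ chosen of size $O(t_0^{-\alpha})$ so the comparison is clean on both ends.

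With the uniform bound $|V_j| \le C f^{-a(n)/2+\epsilon}$ in hand (and hence, via interior elliptic estimates using that the metric is asymptotically cylindrical with bounded geometry, uniform $C^k_{\mathrm{loc}}$ bounds on every compact set), I would extract a subsequence $V_j \to V$ in $C^2_{\mathrm{loc}}$ on $\{f > t_0\}$. The limit satisfies $\Delta_f V + V/2 = Q$ on $\{f > t_0\}$ and inherits $V = O(f^{-a(n)/2+\epsilon})$. It remains to extend across the compact region $\{f \le t_0\}$: here I would solve the equation on that fixed compact piece with boundary data matching $V$ on $\{f = t_0\}$ — again using triviality of the kernel — and check the two pieces glue to a global smooth solution (since both solve the same elliptic equation and agree to first order on the interface, elliptic regularity gives smoothness across it).

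The main obstacle I anticipate is precisely the quantitative boundary control alluded to above: the weighted maximum principle of Proposition~\ref{max-ppe-vec} only compares $|V_j|/v$ to its boundary values, so extracting the \emph{decay rate} $f^{-a(n)/2+\epsilon}$ (rather than merely boundedness or linear growth) requires that the approximating solutions already have the right size on the inner boundary $\{f = t_0\}$ uniformly in $j$. Ensuring this — either by a clever choice of inhomogeneous boundary data of the correct order, or by an additional local elliptic estimate combined with a bootstrap that the growing outer boundary $\{f = T_j\}$ does not spoil — is the delicate point, and is presumably where the argument must mirror the corresponding step in Brendle's work while substituting the potential-function barrier $f - n/2$ for the curvature-based barriers available there.
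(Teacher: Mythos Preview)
Your exhaustion-and-limit strategy is the right shape, but the decay $|V_j| = O(f^{-a(n)/2+\epsilon})$ cannot be extracted from Proposition~\ref{max-ppe-vec} alone, and your proposed fixes do not close the gap. That proposition only says that $(|V_j|+Af^{-\alpha})/(f-n/2)$ attains its maximum on the boundary; with $V_j=0$ there this yields $|V_j| \le C(f-n/2)$, i.e.\ linear growth, as you noted, and nothing in your bootstrap forbids $|V_j|$ from being of order $1$ deep in the annulus. The paper's missing ingredient is a rescaling/blow-up argument: one solves on the full sublevel set $\{f \le t_m^2/4\}$ (not an annulus), sets $\bar A^m(t):=\sup_{\{f=t^2/4\}}|V^m|$, and proves by contradiction a discrete decay inequality $\alpha^{-1}\tau^{a(n)-2\epsilon}\bar A^m(\tau t)\le \bar A^m(t)+t^{-a(n)+2\epsilon}$ for all $t\in[\rho_0,t_m]$. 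If it failed along some $r_m\to\infty$, one normalizes $V^m$ so that the rescaled fields are uniformly bounded above and below on $\{(\tau r_m)^2/4\le f\le r_m^2/4\}$ (this is where Proposition~\ref{max-ppe-vec} is actually used), passes to the cylindrical limit to obtain a nontrivial radially constant field, and reads off $\alpha^{-1}\tau^{a(n)-2\epsilon}\ge 1$, contradicting a suitable choice of $\tau,\alpha\in(0,1)$. Iterating the inequality down from $\bar A^m(t_m)=0$ gives the uniform decay outside a fixed compact set; the compact core is then handled by a second rescaling: if $\sup_M|V^m|$ were unbounded, the normalized limit would be a nontrivial compactly supported solution of $\Delta_f W+W/2=0$, impossible since $\Delta_f$ has analytic coefficients (Bando).

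A secondary but genuine error: your integration-by-parts kernel argument yields $\int|\nabla V|^2 e^{-f}=\tfrac12\int|V|^2 e^{-f}$, which has the wrong sign to force $V=0$. The operator $\Delta_f+\tfrac12$ is not negative (indeed $X=\nabla f$ lies in its global kernel), so Dirichlet solvability needs a different justification; and working on full sublevel sets rather than annuli avoids your gluing step entirely.
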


\begin{proof}
Let $(t_m)_{m\in \mathbb{N}}$ be a sequence tending to $+\infty$ and, for every $m \in \mathbb{N}$, let $V^m$ be a solution of the Dirichlet problem
\begin{eqnarray*}
\Delta_f V^m+ {V^m}/{2} \, = \, Q\quad\hbox{in $\{\, f \, \leq \,  t_m^2/4\,  \}$}, \qquad  \hbox{with} \qquad  V^m =0 \quad \hbox{ on $\{ \, f = t_m^2/4 \, \}$}.
\end{eqnarray*}
To study the growth of $V^{m}$, we are going to prove the following claim.

\bigskip

\noindent{\bf Claim}.
{\em Given $\tau$ and $\alpha$ in $(0,1)$, there exists a positive constant $\rho_0$ such that, for every $t\in[\rho_0,t_m]$, it holds the estimate
\begin{eqnarray*}
\alpha^{-1}\tau^{a(n)-2\epsilon} \, \bar{\A}^m(\tau t) \,\, \leq \,\, \bar{\A}^m(t) \, + \, t^{-a(n)+2\epsilon},
\end{eqnarray*}
where we set $\bar{\A}^m(s):=\sup_{ \{ \, f \, = \, s^2/4 \, \}}\arrowvert V^m\arrowvert$.}

\bigskip

Assume by contradiction that there is a sequence $(r_m)_{m\in \mathbb{N}}$ going to $+\infty$ such that
\begin{eqnarray}\label{ineq-abs-vec}
\alpha^{-1}\tau^{a(n)-2\epsilon}\bar{\A}^m(\tau r_m)\geq\bar{\A}^m(r_m)+r_m^{-a(n)+2\epsilon}
\end{eqnarray}
an define
\begin{eqnarray*}
&&\bar{V}^m:=\frac{V^m}{\max\{\bar{\A}^m(\tau r_m)+(\tau r_m)^{-a(n)+2\epsilon},\tau^2(\bar{\A}^m( r_m)+r_m^{-a(n)+2\epsilon})\}},\\
&&\quad f_m:=\frac{f^{-a(n)/2+\epsilon}}{\max\{\bar{\A}^m(\tau r_m)+(\tau r_m)^{-a(n)+2\epsilon},\tau^2(\bar{\A}^m( r_m)+r_m^{-a(n)+2\epsilon})\}}.
\end{eqnarray*}
By Proposition~\ref{max-ppe-vec}, we get,
\begin{eqnarray*}
1\leq \sup_{(\tau t_m)^2/4\leq f\leq t_m^2/4}\arrowvert \bar{V}^m\arrowvert+Af_m\leq \tau^{-2}.
\end{eqnarray*}
Moreover, $\bar{V}^m$ satisfies
\begin{eqnarray*}
&&\Delta_f \bar{V}^m+\frac{\bar{V}^m}{2}=\bar{Q}^m,\\
&&\bar{Q}^m:=\frac{Q}{{\max\{\bar{\A}^m(\tau r_m)+(\tau r_m)^{-a(n)+2\epsilon},\tau^2(\bar{\A}^m( r_m)+r_m^{-a(n)+2\epsilon})\}}}.
\end{eqnarray*}
By assumption on the growth of $Q$, the sequence $\bar{Q}^m$ is uniformly bounded.
Therefore, as $(M^n,g)$ is asymptotically cylindrical, by blowing-up this equation we obtain that $(\bar{V}^m)_{m\in \mathbb{N}}$ converges to a vector field $\bar{V}^{\infty}$ which is radially constant, i.e. $\nabla_{\partial_r}\bar{V}^{\infty}=0$. Observe also that, if $f_{\infty}:=\lim_{m\rightarrow +\infty}f_m$,
\begin{eqnarray*}
\nabla_{\partial r} f_{\infty}=\lim_{m\rightarrow +\infty}\nabla_{\nabla{2\sqrt{f}}}f_m=0.
\end{eqnarray*}
Consequently, the supremum of $\arrowvert \bar{V}^{\infty}\arrowvert$ on each slice of the cylinder is a nonnegative constant $c_{\infty}$ independent of the slice, the same holds for $f_{\infty}$. Now, inequality~\eqref{ineq-abs-vec} reads, as $m$ tends to $+\infty$,
\begin{eqnarray*}
\alpha^{-1}\tau^{a(n)-2\epsilon}c_{\infty}\geq c_{\infty}+Af_{\infty}>0.
\end{eqnarray*}
In particular, $c_{\infty}>0$, i.e. $\bar{V}^{\infty}$ does not vanish identically and
\begin{eqnarray*}
\alpha^{-1}\tau^{a(n)-2\epsilon}\geq 1,
\end{eqnarray*}
which is a contradiction if $\alpha$ and $\tau$ are chosen properly. This proves the claim.

\bigskip

The claim ensures that $\sup_{m}\sup_{t\in[\rho_0,t_m]}\sup_{ \{ \, f \, = \, t^2/4 \, \}}f^{a(n)/2-\epsilon} \, \arrowvert V^m\arrowvert<+\infty$.
Therefore, if the sequence $(V^m)_{m \in \mathbb{N}}$ is not uniformly bounded on $M$, this can only happen on a fixed compact set. Assume on the contrary that $\sup_m\sup_{M^n}\arrowvert V^m\arrowvert=+\infty$. Define $W^m:=V^m/\sup_{M^n}\arrowvert V^m\arrowvert$. Since $Q$ is bounded on $M^n$, this implies that $(W^m)_{m\in \mathbb{N}}$ uniformly converges on compact sets to a non vanishing vector field $W^{\infty}$ with compact support satisfying $\Delta_fW^{\infty}+W^{\infty}/2=0$. Now, by the work of Bando \cite{Ban-Ana}, $\Delta_f$ is an elliptic operator with analytic coefficients, therefore, $W^{\infty}$ must be analytic too. Since it has compact support, it must vanishes everywhere which is a contradiction.
\end{proof}

\section{Rigidity of the Lichnerowicz equation}\label{Rig-Lic-Equ}


We start this section with the following proposition, which provides an elliptic equation for the Lie derivative of the metric along a vector field which satisfy equation~\eqref{akvf} with $Q=0$.
\begin{prop} Let $(M^n, g, f)$, $n\geq 3$, be a  gradient shrinking Ricci soliton. Assume a vector field $V$ satisfies
\begin{eqnarray}\label{eq-quasi-kill} 
\Delta_fV+ {V}/{2} \, = \, 0.
\end{eqnarray}
Then the Lie derivative $h:=(\Li_Vg)$ satisfies the Lichnerowicz equation
\begin{eqnarray}
\label{liceq}
(\Li_X h)-h=\Delta_Lh,
\end{eqnarray}
where $\Delta_Lh$ denotes the Lichnerowicz laplacian.
\end{prop}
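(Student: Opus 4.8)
The strategy is to compute $\Li_X(\Li_V g)$ directly using the soliton structure, exploiting the hypothesis \eqref{eq-quasi-kill} to convert the equation $\Delta_f V + V/2 = 0$ into the Lichnerowicz equation for $h = \Li_V g$. The starting point is the general commutator identity for Lie derivatives: $\Li_{[X,V]} g = \Li_X \Li_V g - \Li_V \Li_X g$. On a gradient shrinking Ricci soliton, $\Li_X g = 2\nabla^2 f = g - 2\Ric$ by \eqref{equ:0}, so $\Li_V \Li_X g = \Li_V g - 2\Li_V \Ric = h - 2\Li_V \Ric$. Hence
\begin{eqnarray*}
\Li_X h = \Li_{[X,V]} g + h - 2\Li_V \Ric.
\end{eqnarray*}
The task then reduces to showing that the right-hand side equals $h - \Delta_L h$, i.e. that $\Delta_L h = 2\Li_V \Ric - \Li_{[X,V]} g$.

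\textbf{Relating $[X,V]$ to the equation.} The vector field equation \eqref{eq-quasi-kill} should be rewritten in terms of $h$. There is a standard identity $\div h - \tfrac12 \nabla(\tr h) = \Delta V + \Ric(V,\cdot)$ (already used in part $(iv)$ of the previous section's proposition), and on a shrinker $\Delta V + \Ric(V,\cdot) = \Delta_f V + \nabla_X V - \nabla_V X + \Ric(V,\cdot) + (\text{terms})$; more precisely, using $\nabla^2 f = \tfrac12 g - \Ric$, one gets $\Delta U + \Ric(U,\cdot) = \Delta_f U + U/2$ when $[U,X]=0$, but here we do \emph{not} assume $[V,X]=0$, so the relevant computation is $\Delta_f V + V/2 = \Delta V - \nabla_X V + V/2$ and one must carry the $[X,V]$ term. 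I would express $\Li_{[X,V]} g$ via $\nabla^2 f$ and $\nabla V$: since $[X,V] = \nabla_X V - \nabla_V X = \nabla_X V - \nabla^2 f(V,\cdot)^\sharp$, a Lie derivative computation yields $\Li_{[X,V]} g$ in terms of $\nabla \nabla_X V$ and the Hessian of $f$ contracted with $\nabla V$ and with $h$.

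\textbf{The Lichnerowicz Laplacian.} Recall $\Delta_L h_{ij} = \Delta h_{ij} + 2 R_{ikjl} h_{kl} - R_{ik} h_{kj} - R_{jk} h_{ki}$. For $h = \Li_V g$, i.e. $h_{ij} = \nabla_i V_j + \nabla_j V_i$, there is a classical Bochner-type identity expressing $\Delta(\Li_V g)$ in terms of $\Li_{\Delta V} g$, $\Li_{\Ric(V)} g$, and curvature terms — this is essentially the statement that $\Li_{(\cdot)} g$ intertwines the rough Laplacian on vector fields with the Lichnerowicz Laplacian up to lower-order curvature corrections, and it holds on any Riemannian manifold (it is the linearization of the fact that $\Li_{(\cdot)}g$ maps the kernel of the vector Laplacian appropriately). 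Combining this identity with $\Delta V = \nabla_X V + \Ric(V,\cdot)^\sharp{}' + \cdots$ — wait, more cleanly: from \eqref{eq-quasi-kill}, $\Delta V = \nabla_X V - V/2$, so $\Li_{\Delta V} g = \Li_{\nabla_X V} g - h/2$. Substituting everything and using the soliton curvature identities from Lemma~\ref{id-SGS} (in particular \eqref{equ:2}, \eqref{equ:4}) to handle the derivatives of Ricci that appear, the curvature terms should organize themselves precisely into $\Delta_L h$, and the $\Li_{\nabla_X V} g$ terms should combine with $\Li_{[X,V]} g$ and $\Li_{\nabla_V X} g = \tfrac12 h - \Li_{\Ric(V)} g$ to produce $\Li_X h$.

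\textbf{Main obstacle.} The delicate part is bookkeeping the first-order terms: $\Li_{\nabla_X V} g$ is not simply $\nabla_X(\Li_V g)$ — the two differ by terms involving $\nabla V \ast \nabla^2 f$, and keeping track of these, together with the commutator of $\Delta$ and $\Li_V$ acting on $g$, is where errors creep in. I would organize the proof by first establishing the purely Riemannian identity $\Delta(\Li_V g) = \Li_{\Delta V}g + 2\,\mathrm{(curvature)}\ast h$ — more precisely $\Delta(\Li_V g)_{ij} = (\Li_{\Delta V} g)_{ij} + 2R_{ikjl}h_{kl} - R_{ik}h_{kj} - R_{jk}h_{ki} - 2(\Li_{\Ric(V)}g)_{ij} + (\nabla R \ast V)\text{-terms}$ — valid on any manifold, then specialize using the soliton equation and \eqref{equ:2}, \eqref{equ:4} to kill or absorb the $\nabla R \ast V$ terms, and finally invoke \eqref{eq-quasi-kill}. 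The expectation is that the seemingly troublesome $\div\Rm$ and $\nabla\Ric$ terms cancel exactly because of the soliton identities, which is the conceptual reason the Lichnerowicz equation comes out clean.
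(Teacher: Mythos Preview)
Your framework matches the paper's exactly: start from the commutator identity $\Li_X\Li_V g - \Li_V\Li_X g = \Li_{[X,V]}g$, use the soliton equation $\Li_X g = g - 2\Ric$, and reduce to the identity $-2\Li_V\Ric = \Delta_L h - \Li_{[X,V]}g$ (note your sign is off: you want $\Li_X h = h + \Delta_L h$, not $h - \Delta_L h$, so the target is $\Delta_L h = \Li_{[X,V]}g - 2\Li_V\Ric$). You also correctly identify the Bianchi-type identity $\div h - \tfrac12\nabla\tr h = \Delta V + \Ric(V)$ and use the hypothesis together with $\nabla_V X = V/2 - \Ric(V)$ to rewrite this as $[X,V]$.

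Where you diverge from the paper is in the last step. You try to establish $\Delta_L(\Li_V g) = -2\Li_V\Ric + \Li_{[X,V]}g$ by a direct Bochner-type expansion of $\Delta(\Li_V g)$ in local coordinates, tracking all the curvature and $\nabla\Ric$ terms. This can be made to work, but it is laborious and, as you say, error-prone. The paper avoids this entirely by recognizing that this identity is nothing but the \emph{linearization formula for the Ricci tensor}: for any variation $g(t)$ with $\partial_t g|_{t=0} = h$, one has $\partial_t(-2\Ric_{g(t)})|_{t=0} = \Delta_L h - \Li_{\div h - \frac12\nabla\tr h}g$. Applying this with $g(t) = \phi_t^*g$ (the flow of $V$), the left side is $-2\Li_V\Ric$ by diffeomorphism equivariance of $\Ric$, and the right side becomes $\Delta_L h - \Li_{\Delta V + \Ric(V)}g = \Delta_L h - \Li_{[X,V]}g$. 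This is a two-line argument once you quote the variation formula, and no soliton curvature identities beyond the structure equation itself are needed; in particular the $\nabla\RRR$ and $\div\Rm$ cancellations you anticipate never have to be confronted.
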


\begin{proof}
Consider the flow $\{\phi_t\}_t$ generated by the vector field $V$ and the family of metrics $g(t):=\phi_t^{\ast}g$. By equation $(2.31)$ in~\cite[Chapter 2]{Ben} we obtain the variation of the Ricci curvature at the initial time.
\begin{eqnarray*}
\frac{\partial}{\partial t}{\Big|_{t=0}} (-2\Ric_{g(t)})&=&\Delta_L h +\Hess \tr(h)-\Li_{\div(h)}(g)\\
&=&\Delta_L h -\Li_{\div(h)-\frac{1}{2}\nabla\tr(h)}(g),
\end{eqnarray*}
where we set $h:=\frac{\partial}{\partial t}g(t)\arrowvert_{t=0}$. Since $h= ( \Li_Vg)$, we have that
\begin{eqnarray}\label{eq-div-li}
\div(h)-\frac{1}{2}\nabla\tr(h)=\Delta V+\Ric(V).
\end{eqnarray}
Using both the equation~(\ref{eq-quasi-kill}) satisfied by the vector field $V$ and the soliton equation, we deduce
\begin{eqnarray*}
\div(h)-\frac{1}{2}\nabla\tr(h) \,\, = \,\, \Delta V-\nabla_VX+ {V}/{2}  \,\,= \,\, [X,V].
\end{eqnarray*}
We can conclude that
\begin{eqnarray*}
-2( \Li_V\Ric) \, = \, \Delta_L(\Li_Vg) - (\Li_{[X,V]}g).
\end{eqnarray*}
Recalling that $2\Ric = - (\Li_Xg)+g$, one has
$-\Li_V(-(\Li_Xg)+g)=\Delta_L(\Li_Vg )- (\Li_{[X,V]}g)$
an thus
\begin{eqnarray*}
-(\Li_Vg)+\Li_X(\Li_V(g))=\Delta_L(\Li_V(g)),
\end{eqnarray*}
which is the desired equation.
\end{proof}

We conclude this section with the analysis of the Lichnerowicz equation~\eqref{liceq}, providing an a priori estimate and a Liouville-type theorem.

\begin{prop}\label{c0-est-lic} Let $(M^n, g, f)$, $n\geq 3$, be a complete noncompact asymptotically cylindrical gradient shrinking Ricci soliton and let $h$ be a symmetric $2$-tensor satisfying the static Lichnerowicz equation
\begin{eqnarray*}
\Li_X(h)-h=\Delta_Lh . 
\end{eqnarray*}
Then there exists $\alpha>0$ large enough such that for $t_0\leq t_1<t_2$,
\begin{eqnarray*}
\max_{t_1\leq f\leq t_2}\frac{\arrowvert h\arrowvert^2}{v^{\alpha}} \,\, = \,\, \max\, \left\{ \, \max_{f=t_1}\frac{\arrowvert h\arrowvert^2}{v^{\alpha}} \, ; \, \max_{f=t_2}\frac{\arrowvert h\arrowvert^2}{v^{\alpha}} \, \right\}.
\end{eqnarray*}
\end{prop}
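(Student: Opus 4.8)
The plan is to run a maximum-principle argument for the quotient $|h|^2/v^\alpha$, exactly paralleling the proof of Proposition~\ref{max-ppe-vec} but with the eigenfunction $v := f - n/2$ (which satisfies $\Delta_f v = -v$, $v>0$ outside a compact set) playing the role of barrier. First I would derive the Bochner-type identity for $|h|^2$ from the static Lichnerowicz equation $\Li_X h - h = \Delta_L h$. Writing $\Delta_L h = \Delta h + 2\Rm \ast h - \Ric\circ h - h\circ\Ric$ and noting that $\Li_X h = \nabla_X h + \Rm\ast h$-type zeroth-order terms (more precisely, using $\Li_X h = \nabla_X h + h\circ\nabla X + \nabla X\circ h$ together with the soliton equation $\nabla X = \frac12 g - \Ric$), one rewrites the equation as $\Delta_f h = h + (\text{zeroth order terms involving } \Rm, \Ric, h)$. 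Contracting with $h$ and using Kato's inequality $|h|\,|\nabla h| \ge |\nabla|h||\,|h|$ gives a differential inequality of the shape
\begin{eqnarray*}
\Delta_f |h|^2 \,\, \geq \,\, -C\,|h|^2 \, + \, 2|\nabla h|^2 \, ,
\end{eqnarray*}
for some constant $C$ controlled by the (bounded, by Corollary~\ref{cor1} and the asymptotically cylindrical hypothesis) curvature. The crucial point is that the lower-order ``bad'' constant $C$ is uniformly bounded, so that choosing $\alpha$ large will dominate it.

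Next I would compute $\Delta_f(|h|^2/v^\alpha)$ using the standard quotient formula
\begin{eqnarray*}
\Delta_f\!\left(\frac{u}{v^\alpha}\right) \,=\, \frac{\Delta_f u}{v^\alpha} \, - \, \frac{u\,\Delta_f(v^\alpha)}{v^{2\alpha}} \, - \, 2\left\langle \nabla\log v^\alpha, \nabla\!\left(\frac{u}{v^\alpha}\right)\right\rangle ,
\end{eqnarray*}
with $u := |h|^2$. Since $\Delta_f v = -v$, one has $\Delta_f(v^\alpha) = -\alpha v^\alpha + \alpha(\alpha-1)v^{\alpha-2}|\nabla v|^2$, and because $|\nabla v|^2 = |\nabla f|^2 = O(f) = o(v^2)$ by the Cao--Zhou estimate, the term $-\alpha v^\alpha$ dominates: $\Delta_f(v^\alpha) = -\alpha v^\alpha (1 + o(1))$. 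Substituting, the zeroth-order coefficient of $u/v^\alpha$ becomes $(-C + \alpha + o(1))$, which is strictly positive once $\alpha$ is chosen larger than $C$. Dropping the favorable gradient term $2|\nabla h|^2/v^\alpha \ge 0$, I obtain, outside a compact set $\{f \le t_0\}$,
\begin{eqnarray*}
\Delta_f\!\left(\frac{|h|^2}{v^\alpha}\right) \, + \, 2\left\langle \nabla\log v^\alpha, \nabla\!\left(\frac{|h|^2}{v^\alpha}\right)\right\rangle \,\, \geq \,\, 0 \, .
\end{eqnarray*}
Thus $|h|^2/v^\alpha$ is a subsolution of a second-order elliptic operator with no zeroth-order term on the annular region $\{t_1 \le f \le t_2\}$ (for $t_0 \le t_1 < t_2$), and the classical maximum principle yields that its maximum is attained on the boundary $\{f = t_1\} \cup \{f = t_2\}$, which is exactly the claimed identity.

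The main obstacle I anticipate is bookkeeping the zeroth-order curvature terms in the Bochner formula for $|h|^2$ carefully enough to see that the resulting constant $C$ is genuinely bounded (not merely finite pointwise), so that a single $\alpha$ works on the whole end; this uses boundedness of $\Rm$, $\Ric$ and their relevant derivatives along $\nabla f$, which is available from the asymptotically cylindrical assumption and Corollary~\ref{cor1}. A secondary technical point is confirming that the $o(1)$ corrections coming from $|\nabla v|^2/v^2$ and from the curvature terms being $O(f^{-a(n)/2+\epsilon})$ do not spoil the strict sign of the coefficient $(\alpha - C + o(1))$ near the compact set; this is handled simply by enlarging $t_0$ and $\alpha$. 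Everything else is a routine application of the elliptic maximum principle, precisely as in Proposition~\ref{max-ppe-vec}.
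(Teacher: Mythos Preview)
Your proposal is correct and follows essentially the same route as the paper. The paper rewrites the equation as $\Delta_f h + 2\Rm\ast h = 0$ (the $\Ric$-terms coming from $\Li_X h$ and from $\Delta_L h$ cancel exactly, so there is in fact no free ``$h$'' term on the right-hand side as your sketch suggests, though this is immaterial), then computes $\Delta_f(|h|^2/v^\alpha)$ with $v=f-n/2$, obtains the zeroth-order coefficient $\alpha\bigl(1-(\alpha-1)|\nabla\ln v|^2\bigr)-c(|\Rm|_\infty)$, and applies the maximum principle once this is positive outside a compact set for $\alpha$ large.
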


\begin{proof} We start by observing that on a shrinking soliton, the Lichnerowicz equation can be rewritten as
$$
\Delta_fh+2\Rm\ast h \,\, = \,\, 0.
$$
Define as before $v:=f-n/2$. Then, we have the following estimate
\begin{eqnarray*}
\Delta_f\left(\frac{\arrowvert h\arrowvert^2}{v^{\alpha}}\right)&=&v^{-\alpha}\left\langle 2\Delta_fh-\left(\alpha\frac{\Delta_fv}{v}-\alpha(\alpha+1)\arrowvert\nabla\ln v\arrowvert^2\right)h,h\right\rangle\\
&&+2v^{-\alpha}\arrowvert\nabla h\arrowvert^2+2\langle \nabla v^{-\alpha},\nabla\arrowvert h\arrowvert^2\rangle\\
&\geq&v^{-\alpha}\langle\left(\alpha +\alpha(\alpha+1)\arrowvert\nabla\ln v\arrowvert^2\right)h-4\Rm\ast h,h\rangle\\
&&-2\alpha\left\langle\nabla\ln v,\nabla\left(\frac{\arrowvert h\arrowvert^2}{v^{\alpha}}\right)\right\rangle-2\alpha^2\arrowvert\nabla\ln v\arrowvert^2\frac{\arrowvert h\arrowvert^2}{v^{\alpha}}\\
&\geq&\left(\alpha\left(1-(\alpha-1)\arrowvert\nabla\ln v\arrowvert^2\right)-c(\arrowvert\Rm\arrowvert_{\infty})\right)\frac{\arrowvert h\arrowvert^2}{v^{\alpha}}-2\alpha\left\langle\nabla\ln v,\nabla\left(\frac{\arrowvert h\arrowvert^2}{v^{\alpha}}\right)\right\rangle\\
&\geq&-2\alpha\langle\nabla\ln v,\nabla\left(\frac{\arrowvert h\arrowvert^2}{v^{\alpha}}\right)\rangle,
\end{eqnarray*}
outside a compact set for $\alpha$ large enough. The result, follows then by the maximum principle.
\end{proof}

Building on the previous a priori estimates, we are now in the position to present a Liouville-type theorem for solutions to the Lichnerowicz equation with a suitable decay at infinity.

\begin{teo}\label{triv-ker} Let $(M^n, g, f)$, $n\geq 3$, be a complete noncompact asymptotically cylindrical gradient shrinking Ricci soliton and let $h$ be a symmetric $2$-tensor satisfying the static Lichnerowicz equation i.e.
\begin{eqnarray*}
\Li_X(h)-h=\Delta_Lh,
\end{eqnarray*}
such that $h=\textit{O}(f^{-\alpha_0})$, with $\alpha_0>0$. Then $h=0$.
\end{teo}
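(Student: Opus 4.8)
The plan is to argue by contradiction and dimensional reduction. Suppose $h \not\equiv 0$ is a symmetric $2$-tensor satisfying the static Lichnerowicz equation $\Delta_f h + 2\Rm\ast h = 0$ with $h = \textit{O}(f^{-\alpha_0})$ for some $\alpha_0>0$. The first step is to establish that $h$ in fact decays faster than \emph{any} polynomial. To do this I would run the a priori estimate of Proposition~\ref{c0-est-lic} on the annuli $\{t_1 \le f \le t_2\}$ with $t_2 \to \infty$: since $\max_{f=t_2} |h|^2/v^\alpha \to 0$ as $t_2\to\infty$ (because $|h|^2 = \textit{O}(f^{-2\alpha_0})$ and $v^\alpha\sim f^\alpha$), the maximum of $|h|^2/v^\alpha$ on the annulus is attained on the inner boundary $\{f=t_1\}$. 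This gives $|h|^2(x) \le C\, v(x)^\alpha \cdot \big(\max_{f=t_1}|h|^2/v^\alpha\big)$... wait, that only reproves polynomial decay; the key is to iterate or to use the sharp decay of the underlying geometry. A cleaner route: combine the a priori estimate with a blow-down / scaling argument exactly as in Theorem~\ref{theo-ex-app-kill}. That is, introduce $\bar{\A}(t) := \sup_{f=t^2/4}|h|$, prove a ``growth dichotomy'' of the form $\alpha^{-1}\tau^{\beta}\bar{\A}(\tau t) \le \bar{\A}(t)$ for suitable $\tau,\alpha \in (0,1)$ by contradiction using the fact that a blow-up limit of $h$ on the cylinder is radially parallel and solves the Lichnerowicz equation on the round cylinder $\mathbb{R}\times\mathbb{S}^{n-1}$, and then conclude that either $h\equiv 0$ near infinity or $|h|$ decays at a definite exponential rate $|h| = \textit{O}(e^{-\delta s}) = \textit{O}(f^{-\delta'})$ for a rate strictly better than $\alpha_0$ dictated by the spectrum of $\Delta_L$ on the cylinder.

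The second step is to upgrade ``faster than any polynomial decay'' to ``vanishing near infinity'', and this is where I expect the main obstacle to lie. The operator $\Delta_f = \Delta - \nabla_X$ is a drift Laplacian whose drift $X = \nabla f$ has $|X|\sim \sqrt f \sim r/2$ growing at infinity; operators of this type satisfy strong unique continuation at infinity (Carleman-type estimates with weight $e^{f}$, or the polynomial-growth Liouville theorems for the weighted measure $e^{-f}d\mu$ in the spirit of Bando, already invoked in the proof of Theorem~\ref{theo-ex-app-kill}). Concretely, I would test the equation $\Delta_f h + 2\Rm\ast h = 0$ against $e^{-f} h$ over the sublevel sets $\{f\le t\}$, integrate by parts to obtain a Rellich--Pohozaev / frequency-function monotonicity identity for $N(t) := t\int_{f\le t}|\nabla h|^2 e^{-f}\big/\int_{f=t}|h|^2 e^{-f}$, and show that if $h$ decays faster than every polynomial then $N(t)$ is forced to $+\infty$, contradicting the boundedness of $\Rm$ (which makes the zeroth-order term a bounded perturbation). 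This forces $h\equiv 0$ on $\{f\ge t_0\}$.

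The third step is then a routine application of analytic unique continuation: by the shrinker identity $\Rm = \Delta_f\Rm + \Rm\ast\Rm$ and Shi-type / Bando analyticity, the metric $g$ and hence the coefficients of the elliptic operator $h\mapsto \Delta_f h + 2\Rm\ast h$ are real-analytic in harmonic coordinates, so a solution vanishing on the open set $\{f>t_0\}$ vanishes identically on the connected manifold $M^n$. This is exactly the mechanism already used at the end of the proof of Theorem~\ref{theo-ex-app-kill}, so I would simply cite Bando~\cite{Ban-Ana} again. Putting the three steps together yields $h = 0$ on all of $M^n$, which is the assertion of the theorem. To summarize the logical skeleton: (i) a priori estimate $+$ blow-down on the cylinder $\Rightarrow$ super-polynomial (indeed exponential) decay of $h$; (ii) a weighted Carleman / frequency-monotonicity argument for $\Delta_f$ $\Rightarrow$ $h$ vanishes outside a compact set; (iii) analyticity of the shrinker metric $\Rightarrow$ $h\equiv 0$. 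I expect (ii) to be the technical heart, since it requires carefully exploiting the $|X|\to\infty$ drift to kill the bounded zeroth-order curvature term rather than merely controlling it.
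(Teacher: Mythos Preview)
Your outline has the right two ingredients --- the blow-down/rescaling argument and analytic unique continuation --- but you insert an unnecessary and unjustified middle step. The paper's proof shows that your step~(i), carried out carefully, already yields $h\equiv 0$ outside a compact set \emph{directly}; no Carleman estimate or frequency monotonicity is needed.

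Here is what you are missing. Assume $\A(t):=\sup_{\{f=t^2/4\}}|h|^2>0$ for all large $t$. The decay hypothesis $h=\textit{O}(f^{-\alpha_0})$ guarantees that for any fixed $\beta,\tau\in(0,1)$ there is a sequence $t_m\to\infty$ along which
\[
\beta^{-1}\tau^{2\alpha_0}\,\A(\tau t_m)\ \ge\ \A(t_m)
\]
(if the opposite inequality held for all large $t$, iterating it would produce a polynomial lower bound on $\A$ incompatible with $\A(t)=\textit{O}(t^{-4\alpha_0})$). Normalize $h^m:=h/h_m$ so that, by Proposition~\ref{c0-est-lic}, $1\le \sup|h^m|^2\le \tau^{-2\alpha}$ on the annulus $\{(\tau t_m)^2/4\le f\le t_m^2/4\}$. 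In the cylindrical limit the equation $\Delta h^m-\nabla_X h^m+2\Rm\ast h^m=0$, with $|X|\to\infty$ and all other terms bounded, forces $\nabla_{\partial_r}h^\infty=0$; hence $c_\infty:=\sup_{\text{slice}}|h^\infty|^2$ is a \emph{positive constant independent of the slice}. Passing the displayed inequality to the limit yields $\beta^{-1}\tau^{2\alpha_0}\ge 1$, which contradicts the choice $\tau^{2\alpha_0}<\beta$. Thus $\A$ must vanish along some sequence of level sets, and Proposition~\ref{c0-est-lic} then forces $h\equiv 0$ on $\{f\ge t_1\}$; your step~(iii) finishes exactly as you say.

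Your step~(ii) is therefore superfluous. It is also the least substantiated part of your sketch: a frequency or Carleman argument for the drift Laplacian acting on symmetric $2$-tensors, with a bounded but sign-indefinite zeroth-order curvature term, is not something you can simply cite, and setting it up rigorously would be genuine work. The paper sidesteps all of this by recognizing that the radially-constant blow-up limit is already incompatible with \emph{any} polynomial decay rate $\alpha_0>0$; there is no intermediate ``super-polynomial decay'' stage to pass through.
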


\begin{proof}
If there is a sequence $(t_m)_m$ tending to $+\infty$ such that $\sup_{M_{t_m^2/4}}\arrowvert h\arrowvert=0$ for any $m$, then $h=0$ on $f^{-1}([t_1,+\infty))$ by the a priori $C^0$ estimate given by Proposition \ref{c0-est-lic}. 

Assume by contradiction that $\A(t):=\sup_{M_{t^2/4}}\arrowvert h\arrowvert^2$ is positive for any large $t$. By the growth assumption on $h$, given $\beta\in(0,1)$ and $\tau\in(0,1)$, there is a sequence $(t_m)_m$ diverging to $+\infty$ such that
\begin{eqnarray}\label{abs-cond}
\beta^{-1}\tau^{2\alpha_0}\A(\tau t_m)\geq\A(t_m).
\end{eqnarray}

Now, define 
\begin{eqnarray*}
h_m:=\max\left\{\max_{f=(\tau t_m)^2/4}\arrowvert h\arrowvert;\tau^{\alpha}\max_{f=t_m^2/4}\arrowvert h\arrowvert\right\}.
\end{eqnarray*}
By Proposition \ref{c0-est-lic}, if $h^m:=h/h_m$,
\begin{eqnarray*}
1\leq\max_{(\tau t_m)^2/4\leq f\leq t_m^2/4}\arrowvert h^m\arrowvert^2\leq\frac{1}{\tau^{2\alpha}}.
\end{eqnarray*}
Moreover, $h^m$ still satisfies the Lichnerowicz equation. By blowing-up this equation, as $(M^n,g)$ is asymptotically cylindrical, $(h^m)_m$ converges to a symmetric $2$-tensor $h^{\infty}$ which is radially constant, i.e. $\nabla_{\partial_r}h^{\infty}=0$. In particular, the maximum of the norm of $h^{\infty}$ restricted to each slice of the cylinder is a positive constant denoted by $c_{\infty}$. Now, as $t_m$ goes to $+\infty$, inequality (\ref{abs-cond}) reads :
\begin{eqnarray*}
\beta^{-1}\tau^{2\alpha_0}c_{\infty}\geq c_{\infty},
\end{eqnarray*}
which is a contradiction if we choose $\beta$ and $\tau$ such that $\beta^{-1}\tau^{2\alpha_0}<1$. Therefore, $h$ vanishes outside a compact set and satisfies an elliptic equation with analytic coefficients, therefore, $h$ vanishes everywhere.
\end{proof}

\section{Conclusions}\label{Conc}
\subsection{Proof of the Theorem~\ref{main}}

With the notations and the results of Proposition~\ref{prop-ex-alm-kill}, one can apply Theorem~\ref{theo-ex-app-kill} to each vector field $U_i$ to ensure the existence of vector fields $V_i$ satisfying 
\begin{eqnarray*}
&&\Delta_fV_i+\frac{V_i}{2}=\Delta_f U_i+\frac{U_i}{2}\quad\mbox{on $M$},\\
&&V_i=\textit{O}(f^{-a(n)/2+\epsilon}).
\end{eqnarray*}

Therefore, $W_i:=U_i-V_i$ satisfies $\Delta_fW_i+W_i/2=0$. Moreover, the maximum principle applied to $V_i$ gives $\nabla V_i=\textit{O}(f^{-a(n)/2+\epsilon})$. Since, by construction, $\Li_{U_i}(g) =\textit{O}(f^{-a(n)/2+\epsilon})$, we get that $h_i:=\Li_{W_i}(g)=\textit{O}(f^{-a(n)/2+\epsilon})$. Consequently, Theorem~\ref{triv-ker} ensures that $h_i=0$ on $M$. Consequently, we have built $n(n-1)/2$ independent non trivial Killing vector fields on $M$. Morever, one has
\begin{eqnarray*}
[W_i,X]&=&\nabla_{W_i}X-\nabla_X W_i=\frac{W_i}{2}-\Ric(W_i)-\nabla_XW_i\\
&=&\frac{W_i}{2}+\Delta W_i-\nabla_XW_i=0.
\end{eqnarray*}
Now, if we let $\phi_{i}:=\langle W_i,X\rangle$, one has
\begin{eqnarray*}
2 \,\nabla^2 \phi_{i}&=&2\nabla^2\Li_{W_i}(f)=2\Li_{W_i}(\nabla^2 f)\\
&=&\Li_{W_i}(\Li_X(g))=\Li_{X}(\Li_{W_i}(g))=0.
\end{eqnarray*}
Then, by Kato inequality, we have that $|\nabla \phi_{i}|$ is constant on $M$. If, for some $1\leq i \leq n(n-1)/2$, $\nabla \phi_{i}$ is a nontrivial parallel vector field, then, it is easy to check that the metric $g$ is definitely isometric to a Riemannian product of a half line with a level set of $\phi_{i}$. By the fact that $g$ is asymptotically cylindrical, we conclude that the soliton is definitely isometric to the round cylinder and thus, by the analyticity, it is globally isometric to the round cylinder. On the other hand, if $|\nabla \phi_{i}|=0$ for every $i$, then $\phi_{i}=\langle W_i,X\rangle$ has to be constant. Thus,
\begin{eqnarray*}
0 & = & \langle\nabla f,\nabla \phi_{i}\rangle \,\,=\,\, \langle \nabla_{\nabla f} W_{i}, \nabla f \rangle + \nabla^{2} f (\nabla f, W_{i})  \\
&=& \frac{\phi_{i}}{2} - \Ric (\nabla f, W_{i}) \,,
\end{eqnarray*}
where we used the soliton equation and the fact that $W_{i}$ is Killing, and thus $\nabla W_{i}$ is antisymmetric. Hence, from equation~\eqref{equ:2} and Corollary~\ref{Rdecay},  we obtain
$$
\langle W_i,X\rangle \,=\, \phi_{i} \,=\, \langle W_i,\nabla \RRR\rangle \,=\, O(f^{-a(n)/2+\eps} ) \,.
$$
This implies $\langle W_i,X\rangle =0$ for every $i$. Hence, we have constructed $n(n-1)/2$ independent non trivial Killing vector fields on the level sets of $f$, for $f$ large enough. This implies that these level sets are isometric to $(n-1)$-dimensional round sphere, and by analyticity the soliton has to be globally isometric to the round cylinder. 

This completes the proof of Theorem~\ref{main}.

%
%
%

\subsection{Proof of Corollary \ref{nonneg-cyl}}

Let $(M^n, g, f)$ be a gradient shrinking Ricci soliton with bounded positive curvature operator. Assume $M^n$ is not compact. Then, by a result of Naber [Corollary 4.1, \cite{Nab-Sol}], we know that, for any sequence of points $\{x_k\}_{k\in \NN}$ tending to infinity, $(M^n,g,x_k)$ subconverges to $(\mathbb{R}\times N,dt^2+h,x_{\infty})$ where $(N,h)$ is a non flat gradient shrinking Ricci soliton with nonnegative curvature operator. If $(M^n,g)$ has linear volume growth, $N$ is compact and it is diffeomorphic to the levels of the potential function $f^{-1}(t)$ for large $t$. Now, since $M^n$ has positive curvature operator, $M^n$ is diffeomorphic to $\mathbb{R}^n$ by the Gromoll-Meyer theorem. In particular, $f^{-1}(t)$ is homeomorphic to a $(n-1)$-sphere. By the classification of compact manifolds with nonnegative curvature operator \cite{Boh-Wil}, we have that $N$ is diffeomorphic to the standard $(n-1)$-sphere and $h$ is the metric of positive constant curvature normalized by $\Ric_h=h/2$ and therefore, by Theorem \ref{main}, $(M^n,g)$ is isometric to the cylinder. In particular, it is not positively curved and we get a contradiction.

\

\bibliographystyle{amsplain}
\bibliography{bib-sgs-sym}

\end{document}